\documentclass[11pt,leqno,oneside,letterpaper]{amsart}

\usepackage{amssymb,amsmath,amsfonts,latexsym, amscd, amsthm, url} 
\usepackage[letterpaper,left=1.75truein, top=1truein]{geometry}
\usepackage[usenames, dvipsnames]{color}
\usepackage[colorlinks,linkcolor=blue,citecolor=blue]{hyperref}
\usepackage[normalem]{ulem}

\usepackage{caption,subcaption,graphicx,epsfig}
\usepackage{tikz-cd,calc}
\usetikzlibrary{knots}
\usetikzlibrary{decorations.markings}
\definecolor{light-gray}{gray}{0.92}
\definecolor{ultra-light-gray}{gray}{0.97}

\newtheorem{theorem}{Theorem}[section]
\newtheorem{lemma}[theorem]{Lemma}
\newtheorem{proposition}[theorem]{Proposition}
\newtheorem{definition}[theorem]{Definition}

\newtheorem{corollary}[theorem]{Corollary} 

\theoremstyle{definition}
\newtheorem{remark}[theorem]{Remark} 
\newtheorem{remarks}[theorem]{Remarks}

\newtheorem{problem}[theorem]{Problem} 
 
\newtheorem{conjecture}[theorem]{Conjecture}

\theoremstyle{cases}

\numberwithin{subcase}{case} \numberwithin{subsubcase}{subcase}
\numberwithin{equation}{subsection}

\textwidth=6.25in \textheight=9in
\addtolength{\evensidemargin}{-.75in}
\addtolength{\oddsidemargin}{-.75in}

\parindent=0pt
\parskip=8pt



\newcommand{\thomeo}{{\widetilde{\rm{Homeo}}}}

\def\Sm{S_n} 
\def\bSm{\bar{S}_n}


\newcommand\blfootnote[1]{%
  \begingroup
  \renewcommand\thefootnote{}\footnote{#1}%
  \addtocounter{footnote}{-1}%
  \endgroup
} 

\title{Taut foliations in branched cyclic covers and left-orderable groups}

\author[Steven Boyer]{Steven Boyer}
\thanks{Steven Boyer was partially supported by NSERC grant RGPIN 9446-2013}
\address{D\'epartement de Math\'ematiques, Universit\'e du Qu\'ebec \`a Montr\'eal, 201 avenue du Pr\'esident-Kennedy, Montr\'eal, QC H2X 3Y7.}
\email{boyer.steven@uqam.ca}
\urladdr{http://www.cirget.uqam.ca/boyer/boyer.html}

\author[Ying Hu]{Ying Hu}
\thanks{Ying Hu was partially supported by a CIRGET postdoctoral fellowship}
\address{D\'epartement de Math\'ematiques, Universit\'e du Qu\'ebec \`a Montr\'eal, 201 avenue du Pr\'esident-Kennedy, Montr\'eal, QC H2X 3Y7.}
\email{ying.hu@cirget.ca}
\urladdr{https://sites.google.com/site/yinghumath/}

\begin{document}

\maketitle 

\begin{center}
\today
\end{center}

\begin{abstract}
We study the left-orderability of the fundamental groups of cyclic branched covers of links which admit co-oriented taut foliations. In particular we do this for cyclic branched covers of fibred knots in integer homology $3$-spheres and cyclic branched covers of closed braids. The latter allows us to complete the proof of the L-space conjecture for closed, connected, orientable, irreducible $3$-manifolds containing a genus $1$ fibred knot. We also prove that the universal abelian cover of a manifold obtained by generic Dehn surgery on a hyperbolic fibred knot in an integer homology $3$-sphere admits a co-oriented taut foliation and has left-orderable fundamental group, even if the surgered manifold does not, and that the same holds for many branched covers of satellite knots with braided patterns. A key fact used in our proofs is that the Euler class of a universal circle representation associated to a co-oriented taut foliation coincides with the Euler class of the foliation's tangent bundle. Though known to experts, no proof of this important result has appeared in the literature. We provide such a proof in the paper.
\end{abstract}

\blfootnote{2010 Mathematics Subject Classification. Primary 57M50, 57R30, 20F60; Secondary 57M25, 57M99, 20F36}
\blfootnote{Key words and phrases.  Cyclic branched covers, left-orderable groups, fractional Dehn twist coefficient, taut foliation, contact structure.}

\section{Introduction}
\label{sec:intro}

In this paper we study the left-orderability of the fundamental groups of rational homology $3$-spheres $M$ which admit co-oriented taut foliations. Our primary motivation is the {\it L-space conjecture}: 

\begin{conjecture}[Conjecture 1 in \cite{BGW}, Conjecture 5 in \cite{Ju}] 
\label{conj: lspace}
{\it Assume that $M$ is a closed, connected, irreducible, orientable $3$-manifold. Then the following statements are equivalent.  

$(1)$ $M$ is not a Heegaard Floer $L$-space,

$(2)$ $M$ admits a co-orientable taut foliation,

$(3)$ $\pi_1(M)$ is left-orderable. }

\end{conjecture}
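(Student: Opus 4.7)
The stated assertion is a well-known open conjecture rather than a theorem with an imminent proof; my plan therefore describes the architecture into which partial results (including those of the present paper) would assemble, and identifies the step that remains the central obstacle.

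Of the six pairwise implications, $(2) \Rightarrow (1)$ is already established in the literature by combining Eliashberg--Thurston's perturbation of a co-oriented taut foliation into a weakly symplectically fillable contact structure with the Ozsv\'ath--Szab\'o obstruction to such structures on L-spaces; I would take this as input. For $(2) \Rightarrow (3)$, my plan is to use the Calegari--Dunfield--Thurston universal circle construction: a co-oriented taut foliation $\mathcal{F}$ yields a faithful action $\rho_{\mathcal{F}} \colon \pi_1(M) \to \operatorname{Homeo}^+(S^1)$ whose Euler class, by the paper's main technical identification, agrees with $e(T\mathcal{F})$. When this class vanishes, the action lifts to $\widetilde{\operatorname{Homeo}}^+(S^1)$ and $\pi_1(M)$ inherits a left order from the lifted action on $\mathbb{R}$; when it does not, one passes to a finite cover that kills the class and attempts to transfer a left order back along techniques of the kind developed in this paper for cyclic branched covers.

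The converse directions are harder. For $(1) \Rightarrow (2)$ the strategy I would pursue is inductive, via sutured manifold hierarchies in the spirit of Gabai, constructing a taut foliation dual to a suitable homology class; since $M$ is a rational homology sphere no such class is available directly on $M$, and one must proceed indirectly by choosing a knot or link $L \subset M$, building a foliation on an appropriate cyclic branched cover using Heegaard Floer data that controls the cover, and then descending. For $(3) \Rightarrow (1)$ and $(3) \Rightarrow (2)$ the cleanest route is to establish $(1) \Leftrightarrow (2)$ first and then combine it with Heegaard Floer computations for taut foliations of the Kazez--Roberts type to close the loop; there is at present no purely group-theoretic path from left-orderability to either of the other two conditions.

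The main obstacle is plainly $(1) \Rightarrow (2)$: producing a geometric object (a taut foliation) from the negation of a chain-complex condition has no general conceptual bridge, and the conjecture has resisted precisely here. A realistic plan therefore does not aim at the full statement in a single stroke but at systematically enlarging the class of $3$-manifolds for which each implication is verified, with cyclic branched covers and the Euler class identification of this paper serving as the organizing principles on the $(2) \Rightarrow (3)$ side and as a source of candidate families on the $(1) \Rightarrow (2)$ side.
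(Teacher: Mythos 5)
You are right that this statement is the L-space conjecture itself, stated in the paper purely as motivation: the paper contains no proof of it, and none exists in the literature, so there is no ``paper proof'' against which your proposal can be measured. What the paper actually does is verify special cases (cyclic branched covers of certain fibred knots, closed braids, genus one open books, satellite links with braided patterns), and your description of the overall landscape --- $(2) \Rightarrow (1)$ known, $(2) \Rightarrow (3)$ attacked via universal circles and the Euler class identification $e(E_{\rho_{univ}}) = e(T\mathcal{F})$, and $(1) \Rightarrow (2)$ as the principal open obstacle --- is consistent with how the paper frames its contribution.

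Two points in your sketch need correction, however. First, the universal circle representation is not known (and not claimed) to be faithful; the paper only establishes that its image is infinite and non-abelian when $M$ is a rational homology sphere (Remark \ref{rem: rhouniv non-trivial}), and non-triviality suffices because of \cite[Theorem 1.1]{BRW} for irreducible $M$. Second, your fallback when $e(T\mathcal{F}) \neq 0$ --- pass to a finite cover killing the class and ``transfer a left order back'' --- goes in the wrong direction: left-orderability is inherited by subgroups, so knowing a finite-index subgroup of $\pi_1(M)$ is left-orderable gives no conclusion about $\pi_1(M)$ itself. The paper's use of covers is the opposite: it proves left-orderability of the fundamental group of a branched cover $\Sigma_{2n}(\hat b)$ by exhibiting a $\pi_1$-surjective map onto $\Sigma_2(\hat b)$ whose group acts non-trivially on the line, i.e.\ orders are pulled up along surjections, not pushed down along inclusions. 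A minor further point: for the topological ($C^0$) taut foliations used here, $(2) \Rightarrow (1)$ requires the approximation results of Bowden and Kazez--Roberts in addition to the Eliashberg--Thurston perturbation, which is why the paper cites \cite{OS1, KR2, Bn} rather than Eliashberg--Thurston alone.
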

The conjecture is known to hold in a variety of situations, most notably when $M$ has positive first Betti number (\cite{BRW,Ga1}), or is a non-hyperbolic geometric $3$-manifold (\cite{BGW, BRW, LS}), or is a graph manifold ({\cite{BC, HRRW}). Condition (2) of the conjecture is known to imply condition (1) (\cite{{OS1},KR2, {Bn}}). Gordon and Lidman introduced the term {\it excellent} for manifolds satisfying conditions (2) and (3), and therefore (1), of the conjecture, and {\it total L-space} for manifolds satisfying neither (1) nor (3), and therefore neither (2). It is clear that Conjecture \ref{conj: lspace} holds for manifolds which are either excellent or total L-spaces and that the conjecture is equivalent to the statement that a closed, connected, irreducible, orientable $3$-manifold is either excellent or a total $L$-space. 

Given a closed, connected, irreducible, orientable $3$-manifold $M$, the available techniques for verifying that $M$ satisfies conditions (1) and (2) of Conjecture \ref{conj: lspace} are far in  advance of those available for verifying (3).  An equivalent condition for (3) is the existence of a non-trivial homomorphism $\pi_1(M) \to \hbox{Homeo}_+(\mathbb R)$ (\cite[Theorem 1.1]{BRW}), but these are difficult to construct in general. One method for producing them is to consider a non-trivial representation $\rho: \pi_1(M) \to PSL(2, \mathbb R)$ whose Euler class vanishes (cf. \S \ref{sec: euler class}). Such a $\rho$ lifts to a representation $\pi_1(M) \to \widetilde{SL_2} \leq \hbox{Homeo}_+(\mathbb R)$, and so $\pi_1(M)$ is left-orderable. A drawback of this approach is that it gives no insight into potential connections between condition (3) and conditions (1) and (2). To address this point, suppose that $M$ satisfies (2) and let $\rho: \pi_1(M) \to \hbox{Homeo}_+(S^1)$ be a non-trivial representation obtained through Thurston's universal circle construction applied to a co-oriented taut foliation on $M$ (cf. \S \ref{sec:taut foliations circle}). As before, there is a characteristic class $e(\rho) \in H^2(M)$ whose vanishing implies the left-orderability of $\pi_1(M)$ (see \S \ref{sec: euler class}). It is known that $e(\rho)$ coincides with the Euler class of the foliation's tangent bundle (see Proposition \ref{prop:Euler class}), and while the latter does not always vanish, one goal of this paper is to show that it does in topologically interesting situations. In particular, we use this approach to investigate Conjecture \ref{conj: lspace} in the context of manifolds obtained as branched covers of knots and links in rational homology $3$-spheres. 

Gordon and Lidman initiated such a study for links in $S^3$ (\cite{GLid1, GLid2}), focusing on torus links and certain families of satellite knots, including cables. Here we will be mainly concerned with cyclic branched covers of hyperbolic links. In this case, the cyclic branched covers are almost always hyperbolic (\cite{BPH,Dun}). 

Hyperbolic $2$-bridge knots form one of the simplest families of hyperbolic knots and various aspects of Conjecture \ref{conj: lspace} have been studied for their branched covers. For instance, work of Dabkowski, Przytycki, and Togha \cite{DPT} combines with that of Peters \cite{Pe} to show that the branched covers of many genus one $2$-bridge knots, including the figure eight knot, are total L-spaces. The second named author showed that for large $n$, the fundamental group of the $n$-fold branched cyclic cover of the $(p,q)$ $2$-bridge knot is left-orderable if $q \equiv 3$ (mod $4$) \cite{Hu}. More generally, Gordon showed that the same conclusion holds for any $2$-bridge knot with non-zero signature \cite{Gor}.

Before we state our results, we introduce some notation and terminology. See \S \ref{sec: background} for the details.  

Given a $3$-manifold $V$ with a connected toroidal boundary, a slope on $\partial V$ is a $\partial V$-isotopy class of essential simple closed curves contained in $\partial V$. We identify slopes with $\pm$-classes of primitive elements of $H_1(\partial V)$, in the usual way, and often represent them by primitive classes $\alpha \in H_1(\partial V)$. The Dehn filling of $V$ determined by a slope $\alpha$ on $\partial V$ will be denoted by $V(\alpha)$. 

Let $K$ be an oriented null-homologous knot in an oriented rational homology sphere $M$. We use $X(K)$ to denote its exterior in $M$ and $\mu, \lambda \in H_1(\partial X(K))$ to denote, respectively, the longitudinal and meridional classes of $K$ (cf. \S \ref{subsec:knot exterior}). Since $K$ is null-homologous, $\{\mu, \lambda\}$ is a basis of $H_1(\partial X(K))$. 

For each $n \geq 1$, $X_n(K) \to X(K)$ will be the canonical $n$-fold cyclic cover of $X(K)$ and $\Sigma_n(K)\to M$ the associated $n$-fold cyclic cover branched over $K$. 
There is a basis $\{\mu_n, \lambda_n\}$ of $H_1(\partial X_n(K))$ where the image of $\mu_n$ in $H_1(\partial X(K))$ is $n \mu$ and that of $\lambda_n$ is $\lambda$. By construction, $\Sigma_n(K)=X_n(K)(\mu_n)$
(\S \ref{sec:cyclic branched cover}). 

Given a fibred knot $K$ in an irreducible rational homology sphere, we use $c(h)$ to denote the fractional Dehn twist coefficient of  its monodromy $h$ (\S \ref{sec:fractional Dehn twist coefficient}). When $K$ is hyperbolic and $c(h)\neq 0$, work of Roberts (\cite{Rob}) can be used to show that if $n|c(h)| \geq 1$, the $n$-fold cyclic cover branched cover of such $K$ admits co-oriented taut foliations (\cite[Theorem 4.1]{HKM2}).  We use the universal circle construction to show that under the same conditions, the branched covers have left-orderable fundamental groups: 

\begin{theorem} 
\label{thm:conjecture fibre knots} 
Let $K$ be a hyperbolic fibred knot in an oriented integer homology $3$-sphere $M$ with monodromy $h$.

$(1)$ $\Sigma_n(K)$ is excellent for $n|c(h)| \geq 1$. In particular, if the fractional Dehn twist coefficient $c(h) \ne 0$ and $g$ is the genus of $K$, then $\Sigma_n(K)$ is excellent for $n\geq 2(2g-1)$.

$(2)$ More generally, for $n \geq 1$, $X_n(K)(\mu_n + q \lambda_n)$ is excellent whenever $|nc(h) - q| \geq 1$.
\end{theorem}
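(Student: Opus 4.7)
The plan is to use Roberts's construction to produce a co-oriented taut foliation $\mathcal{F}$ on the filled manifold, apply Thurston's universal circle machinery to obtain a representation $\rho\colon\pi_1(M)\to\mathrm{Homeo}_+(S^1)$, and then show that $e(\rho)=e(T\mathcal{F})$ vanishes, so that $\rho$ lifts to a non-trivial action on $\mathbb{R}$ and \cite[Theorem~1.1]{BRW} delivers left-orderability. Note first that part~(1) for $c(h)\neq 0$ is the specialization $q=0$ of part~(2) applied to $X_n(K)(\mu_n)=\Sigma_n(K)$. The ``in particular'' clause of~(1) uses the standard bound that the denominator of $c(h)$ for a pseudo-Anosov monodromy on a once-punctured genus-$g$ surface is at most $4g-2$, so $c(h)\neq 0$ forces $|c(h)|\geq 1/(4g-2)$ and hence $n\geq 2(2g-1)$ forces $n|c(h)|\geq 1$.

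Set $M=X_n(K)(\mu_n+q\lambda_n)$. Pulling back the fibration of $X(K)$ realizes $X_n(K)$ as fibering over $S^1$ with fiber $F$ (a genus-$g$ Seifert surface) and monodromy $h^n$, and $c(h^n)=nc(h)$. The hypothesis $|nc(h)-q|\geq 1$ is precisely the fractional-twist gap required by Roberts's theorem, as packaged in \cite[Theorem~4.1]{HKM2}, to produce a co-oriented taut foliation $\mathcal{F}$ on $M$; this establishes condition~(2) of excellence. Applying Thurston's universal circle construction then yields a non-trivial representation $\rho\colon\pi_1(M)\to\mathrm{Homeo}_+(S^1)$, and by Proposition~\ref{prop:Euler class}, $e(\rho)$ agrees with $e(T\mathcal{F})\in H^2(M;\mathbb{Z})$.

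The main obstacle, and the hardest step, is to verify $e(T\mathcal{F})=0$. My plan is to exhibit a nowhere-zero section of the $2$-plane bundle $T\mathcal{F}$. On $X_n(K)$, $\mathcal{F}$ is the Roberts modification of the given fibration; since the fiber $F$ has non-empty boundary, $TF$ is trivial, and one may select a nowhere-zero tangent field $V$ on $F$ whose winding along $\partial F$ is adapted to the prong structure of the pseudo-Anosov $h^n$. With this adaptation, $h^n_\ast V$ is isotopic to $V$ through non-vanishing fields, so $V$ assembles into a nowhere-zero section of $T\mathcal{F}$ on $X_n(K)$. In the Dehn-filled solid torus, the Roberts model piece is built from disk-like leaves whose tangent bundle is trivial, and the gap hypothesis $|nc(h)-q|\geq 1$ forces the restriction of $\mathcal{F}$ to $\partial X_n(K)$ to meet the filling slope transversely in the precise way that allows $V$ to extend across. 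The bulk of the work will go into this matching step: reconciling the prescribed winding on $\partial F$, the monodromy data of $h^n$, and the explicit local form of the foliation near the filling.

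Once $e(\rho)=0$, the representation $\rho$ lifts to $\widetilde{\rho}\colon\pi_1(M)\to\widetilde{\mathrm{Homeo}}_+(S^1)\subset\mathrm{Homeo}_+(\mathbb{R})$, which is non-trivial because $\rho$ is. Invoking \cite[Theorem~1.1]{BRW} then gives left-orderability of $\pi_1(M)$, establishing condition~(3) of excellence and completing the proof.
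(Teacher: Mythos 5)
Your high-level skeleton (Roberts/HKM foliation, universal circle, $e(\rho)=e(T\mathcal{F})$ via Proposition~\ref{prop:Euler class}, lift, then \cite[Theorem 1.1]{BRW}) is the paper's, and your reduction of (1) to (2) and the bound $|c(h)|\geq \frac{1}{2(2g-1)}$ (Proposition~\ref{prop:lower bound FDTC}) are fine. The genuine gap is exactly the step you defer: you never actually prove $e(T\mathcal{F})=0$. Your plan is to build a nowhere-zero section by hand, but (i) the vector field $V$ you describe is tangent to the fibres of the fibration, not to the leaves of the Roberts foliation, so to convert it into a section of $T\mathcal{F}$ you already need to know that $T\mathcal{F}|_{X_n(K)}$ is homotopic to the plane field of the fibration --- which is precisely the homotopy statement (foliation homotopic to the supported contact structure) packaged in Theorem~\ref{thm:cgeq1} that the paper uses instead; (ii) the claim that $V$ can be chosen with $h^n_*V$ isotopic to $V$ through non-vanishing fields, and then extended over the filling solid torus, is exactly the assertion that the (relative) Euler class vanishes, and this is where the entire content of the statement lives --- your ``matching step'' is the theorem, not a technicality; and (iii) your construction nowhere uses the hypothesis that $M$ is an integer homology $3$-sphere, which is essential: for a fibred knot in a general rational homology sphere the Euler class of the fibration's plane field need not vanish (cf.\ Remark~\ref{rem: main theorem}(1) and Proposition~\ref{prop: e=0 and c>1 implies lo}), so an argument that never invokes $H^2(X(K))=0$ cannot be complete.

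The paper's route avoids all matching and is purely cohomological: $X_n(K)(\mu_n+q\lambda_n)$ is the open book $(S,T_\partial^{-q}h^n)$ with $|c(T_\partial^{-q}h^n)|=|nc(h)-q|\geq 1$ (Lemma~\ref{lem:poincare translation number}); Theorem~\ref{thm:cgeq1} gives a taut foliation whose plane field is homotopic to the supported contact structure $\xi$; $\xi$ restricted to the binding exterior $X_n(K)$ is homotopic to the fibration plane field $T\widetilde{\mathcal{F}}_0$, whose Euler class vanishes because it is pulled back from $H^2(X(K))=0$ (this is where the integer homology sphere hypothesis enters); and Lemma~\ref{lemma: homology of exterior} says restriction $H^2\bigl(X_n(K)(\mu_n+q\lambda_n)\bigr)\to H^2(X_n(K))$ is an isomorphism, whence $e(T\mathcal{F})=e(\xi)=0$ on the closed manifold with no explicit section needed. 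One further omission: when $b_1\bigl(X_n(K)(\mu_n+q\lambda_n)\bigr)>0$ the universal circle argument as set up here (which needs a rational homology sphere for Candel's leafwise hyperbolic metric) does not apply, and that case must be handled separately, as the paper does by citing \cite{BRW, Ga1}.
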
 
For a fixed $n$, there are at most two values of $q$ for which $|nc(h) - q| < 1$ and if two, they are successive integers. Such exceptional values of $q$ are necessary as, for instance, $X_n(K)(\mu_n + q \lambda_n)$ could have a finite fundamental group. Compare Corollary \ref{cor:universal abelian cover}.

It is known that the fractional Dehn twist coefficients of the monodromies of hyperbolic, fibred, strongly quasipositive knots are non-zero (\cite{Hed, HKM1}). In particular, this is true for $K$ an L-space knot, as they are fibred and strongly quasipositive (cf. \cite[Theorem 1.2]{Hed}, \cite[Corollary 1.3]{Ni} and the calculations of \cite{OS2}).

\begin{corollary} 
\label{cor:cyclic branched covers of SQP}
Suppose that $K$ is a hyperbolic, fibred, strongly quasipositive knot with monodromy $h$. Then $\Sigma_n(K)$ is excellent for $n \geq \frac{1}{|c(h)|}$. In particular, $\Sigma_n(K)$ is excellent  if $n\geq 2(2g-1)$.  
\qed 
\end{corollary}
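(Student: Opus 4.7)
The plan is to derive this corollary as an immediate specialization of Theorem~\ref{thm:conjecture fibre knots}(1); strong quasipositivity enters only to guarantee the non-vanishing of the fractional Dehn twist coefficient $c(h)$.

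The single input I would invoke is the theorem of Hedden~\cite{Hed} and Honda--Kazez--Mati\'c~\cite{HKM1} asserting that the monodromy of a hyperbolic, fibred, strongly quasipositive knot in an integer homology $3$-sphere has strictly positive fractional Dehn twist coefficient. This is recorded explicitly in the paragraph preceding the corollary, so no new argument is required here; in particular, $c(h) \neq 0$ (and indeed $c(h) > 0$).

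With $c(h) \neq 0$ in hand, the first assertion is immediate: the hypothesis $n \geq 1/|c(h)|$ rearranges to $n|c(h)| \geq 1$, and the first sentence of Theorem~\ref{thm:conjecture fibre knots}(1) then yields that $\Sigma_n(K)$ is excellent. The ``in particular'' clause is a verbatim application of the second sentence of Theorem~\ref{thm:conjecture fibre knots}(1), whose standing hypothesis $c(h) \neq 0$ has just been verified. There is essentially no remaining obstacle at the level of the corollary: all of the substantive work lives in Theorem~\ref{thm:conjecture fibre knots}(1) itself (constructing co-oriented taut foliations on the cyclic branched covers via Roberts~\cite{Rob} and \cite{HKM2}, and controlling the Euler class of the associated universal circle representation), which is why no separate proof is recorded beyond the end-of-proof symbol following the statement.
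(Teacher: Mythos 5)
Your proposal is correct and matches the paper's (unwritten) argument exactly: the paper records no proof because, as you say, the cited fact that hyperbolic, fibred, strongly quasipositive knots have $c(h) \ne 0$ (\cite{Hed, HKM1}) reduces both assertions to Theorem~\ref{thm:conjecture fibre knots}(1), the second via the bound $|c(h)| \geq \frac{1}{2(2g-1)}$ of Proposition~\ref{prop:lower bound FDTC}. Nothing further is needed.
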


Boileau, Boyer and Gordon have investigated the $n$-fold branched cyclic covers of strongly quasipositive knots \cite{BBG} and have shown that in the fibred case they are not L-spaces for $n \geq 6$.   Since $c(h)$ can be arbitrarily small for such knots, the disparity between the sufficient condition $n \geq 6$ for condition (1) of the conjecture to hold and $n \geq \frac{1}{|c(h)|}$ for conditions (2) and (3) to hold is arbitrarily large. A major challenge is to develop techniques to bridge this gap. \\

\begin{remarks} \label{rem: main theorem}$\;$ 

(1) Theorem \ref{thm:conjecture fibre knots} and its corollaries (Corollary \ref{cor:universal abelian cover}, Corollary \ref{cor: knots rational homology sphere}) hold for hyperbolic fibred knots in oriented rational homology spheres under the assumption that the Euler class of the tangent plane bundle of the fibring of the exterior of the knot is zero (Proposition \ref{prop: e=0 and c>1 implies lo}).

(2)  In Theorem {\ref{thm:conjecture fibre knots}}, the inequality $|nc(h) - q| \geq 1$ can be recast in terms of the {\it distance} $\Delta(\alpha, \beta)$ between slopes $\alpha, \beta$ on $\partial X(K)$. Thinking of $\alpha$ and $\beta$ as primitive classes in $H_1(\partial X(K))$ and using $\alpha \cdot \beta$ to denote their algebraic intersection number, $\Delta(\alpha, \beta)$ is defined to be $|\alpha \cdot \beta|$. If $c(h) = \frac{a}{b}$ where $a, b$ are coprime integers, then the {\it degeneracy slope} of $K$ is represented by the primitive class $\delta = b \mu + a \lambda$\ (\cite{GO, KR1}). Then $|nc(h) - q| < 1$ if and only if $\Delta(n \mu + q \lambda, \delta) = |na - qb| < |b| = \Delta(\lambda, \delta)$. Thus the theorem says that $X_n(K)(\mu_n + q \lambda_n)$ is excellent if $\Delta(n \mu + q \lambda, \delta) \geq  \Delta(\lambda, \delta)$.
\end{remarks}
\vspace{-.2cm}

The {\it universal abelian cover} of a manifold $W$ is the regular cover $\widetilde{W} \to W$ corresponding to the abelianisation homomorphism $\pi_1(W) \to H_1(W)$. It is simple to see that if $\gcd(n, q) = 1$, there is a universal abelian cover $X_n(K)(\mu_n + q \lambda_n) \to X(K)(n\mu + q \lambda)$. 

\begin{corollary} 
 \label{cor:universal abelian cover}
Let $K$ be a hyperbolic fibred knot in an integer homology $3$-sphere with monodromy $h$. Given coprime integers $n \geq 1$ and $q$, then the universal abelian cover of $X(K)(n\mu+q\lambda)$ is excellent for 
$q \not \in \left\{ 
\begin{array}{cl} 
\{nc(h)\}  & \hbox{ if } nc(h) \in \mathbb Z \\ 
\{\lfloor nc(h) \rfloor, \lfloor nc(h) \rfloor+ 1\}  & \hbox{ if } nc(h) \not  \in \mathbb Z  
\end{array} \right.$.  
\end{corollary}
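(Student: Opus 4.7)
The plan is to reduce the corollary directly to Theorem \ref{thm:conjecture fibre knots}(2) by identifying the universal abelian cover $\widetilde W$ of $W := X(K)(n\mu + q\lambda)$ with the filled manifold $X_n(K)(\mu_n + q\lambda_n)$.

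First I would compute $H_1(W)$. Since $K$ is null-homologous in an integer homology sphere, $H_1(X(K)) = \mathbb Z\mu$ with $\lambda = 0$, so $H_1(W) = \mathbb Z/n$ and $\widetilde W \to W$ is the $n$-fold cyclic cover classified by the surjection $\pi_1(W) \to \mathbb Z/n$. Its restriction over $X(K) \subset W$ is classified by the composition $\pi_1(X(K)) \to H_1(X(K)) = \mathbb Z\mu \to \mathbb Z/n$, which is precisely the classifying map for $X_n(K) \to X(K)$. Hence $\widetilde W$ is obtained from $X_n(K)$ by filling in the preimage of the filling solid torus $\mathcal T \subset W$.

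Second, and this is the core of the argument, I would verify that this preimage is a single solid torus $\widetilde{\mathcal T}$ whose meridian on $\partial X_n(K)$ represents $\mu_n + q\lambda_n$. For connectedness, pick a core $c = a\mu + b\lambda$ of $\mathcal T$ with $nb - qa = \pm 1$; the hypothesis $\gcd(n,q) = 1$ forces $qa \equiv \mp 1 \pmod n$, so $a$ generates $\mathbb Z/n$, the classifying map $\pi_1(\mathcal T) = \mathbb Z \to \mathbb Z/n$ is surjective, and the cover of $\mathcal T$ is therefore a connected $n$-fold cyclic cover of a solid torus --- itself a solid torus. For the meridian, note that $\mu_n + q\lambda_n$ is primitive in $H_1(\partial X_n(K))$ and projects onto $n\mu + q\lambda$ with degree $1$, so each component of the preimage of the meridian disk of $\mathcal T$ has boundary of class $\mu_n + q\lambda_n$ on $\partial \widetilde{\mathcal T}$. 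This identifies $\widetilde W$ with $X_n(K)(\mu_n + q\lambda_n)$.

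Third, Theorem \ref{thm:conjecture fibre knots}(2) applied to this filling declares $\widetilde W$ excellent whenever $|nc(h) - q| \geq 1$. A short case split depending on whether $nc(h)$ lies in $\mathbb Z$ then translates the failure of this inequality into the exceptional set $\{nc(h)\}$, respectively $\{\lfloor nc(h) \rfloor, \lfloor nc(h) \rfloor + 1\}$, displayed in the statement. The only non-formal ingredient is the cover identification, and within it the subtle point is the connectedness of $\widetilde{\mathcal T}$, which genuinely uses $\gcd(n,q) = 1$; everything else is bookkeeping or a direct invocation of the theorem.
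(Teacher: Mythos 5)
Your proof is correct and takes essentially the same route as the paper: identify the universal abelian cover of $X(K)(n\mu+q\lambda)$ with the filling $X_n(K)(\mu_n+q\lambda_n)$, apply Theorem \ref{thm:conjecture fibre knots}(2), and translate the failure of $|nc(h)-q|\geq 1$ into the stated exceptional set of $q$. The only difference is that you verify the cover identification in detail (connectedness of the lifted filling torus via $\gcd(n,q)=1$ and the computation of the lifted slope $\mu_n+q\lambda_n$), a point the paper dispenses with as ``simple to see'' in the remark preceding the corollary.
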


Corollary \ref{cor:universal abelian cover} is striking in that it says that the universal abelian cover of the generic Dehn surgery on a hyperbolic fibred knot in an integer homology $3$-sphere is excellent even when the surgered manifold is not. Consider, for instance, a hyperbolic L-space knot $K \subset S^3$. Up to replacing $K$ by its mirror image, we can suppose that $n/q$-surgery of $K$ is an L-space if and only if $n/q \geq 2g(K) - 1$. The corollary implies that if $n/q \geq 2g(K) - 1$, then avoiding the specified values of $q$, $n/q$-surgery of $K$ is a non-excellent manifold whose universal abelian cover is excellent.

Assuming the truth of Conjecture \ref{conj: lspace}, the corollary holds for all hyperbolic knots in the $3$-sphere. For instance, if $K$ is a non-fibred hyperbolic knot in $S^3$, it admits no non-trivial surgeries which yield L-spaces (\cite{Ghi, Ni}). Conjecture \ref{conj: lspace} then implies that for $n$ and $q$ as in the corollary, the rational homology sphere $X(K)(n \mu + q \lambda)$ admits a co-orientable taut foliation. Hence the same is true for its universal abelian cover $X_n(K)(\mu_n + q \lambda_n)$. This cover also has a left-orderable fundamental group, and is therefore excellent, by Remark \ref{rem: rhouniv non-trivial} and \cite[Lemma 3.1]{BRW}.

\begin{conjecture}
{\it Let $n, q$ be coprime integers with $nq \ne 0$ and let $K$ be a hyperbolic knot in $S^3$. If the universal abelian cover of $X(K)(n \mu + q \lambda)$ is not excellent, then $K$ is fibred and if $h$ is its monodromy,  $q \in \left\{ 
\begin{array}{cl} 
\{nc(h)\}  & \hbox{ if } nc(h) \in \mathbb Z \\ 
\{\lfloor nc(h) \rfloor , \lfloor nc(h) \rfloor + 1\}  & \hbox{ if } nc(h) \not  \in \mathbb Z  
\end{array} \right.$.}
\end{conjecture}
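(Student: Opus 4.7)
The plan is to dichotomize according to whether or not $K$ is fibred. If $K \subset S^3$ is a hyperbolic fibred knot, then Corollary \ref{cor:universal abelian cover} is precisely the contrapositive of the desired conclusion, so there is nothing further to prove in that case. The substantive content of the conjecture is therefore the assertion that if $K \subset S^3$ is a non-fibred hyperbolic knot and $n, q$ are coprime integers with $nq \neq 0$, then the universal abelian cover $X_n(K)(\mu_n + q \lambda_n)$ of $X(K)(n\mu + q\lambda)$ is excellent.

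For such non-fibred $K$, the theorems of Ghiggini and Ni cited in the excerpt imply that no non-trivial Dehn surgery on $K$ yields an L-space. Granting Conjecture \ref{conj: lspace} for the rational homology sphere $X(K)(n\mu + q\lambda)$, one obtains a co-oriented taut foliation $\F$ on it. Pulling $\F$ back along the universal abelian cover $X_n(K)(\mu_n + q\lambda_n) \to X(K)(n\mu + q\lambda)$ produces a co-oriented taut foliation $\widetilde \F$ on the cover. By Proposition \ref{prop:Euler class}, the Euler class of the associated universal circle representation $\rho_{\widetilde \F}$ agrees with the Euler class of $T\widetilde \F$. Invoking Remark \ref{rem: rhouniv non-trivial} to guarantee non-triviality of $\rho_{\widetilde \F}$ on the abelian cover, and then the argument of \cite[Lemma 3.1]{BRW}, one deduces that $\pi_1(X_n(K)(\mu_n + q\lambda_n))$ is left-orderable, completing the verification of excellence.

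The principal obstacle is Conjecture \ref{conj: lspace} itself, applied to the surgered manifold: producing a co-oriented taut foliation on $X(K)(n\mu + q\lambda)$ from the non-L-space property alone is beyond present techniques for an arbitrary non-fibred hyperbolic $K$. A potentially more tractable unconditional route would bypass the downstairs surgery and construct a co-oriented taut foliation directly on the universal abelian cover. One strategy is to adapt Roberts-type constructions to the cyclic cover $X_n(K)$, building laminations on $X_n(K)$ whose boundary slopes are compatible with the filling class $\mu_n + q\lambda_n$; another is to start with a branched surface in $X(K)$ carrying essential laminations and determine which surgery slopes on $X_n(K)$ preserve tautness after filling. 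A secondary technical issue, even with the foliation in hand, is ensuring vanishing of the relevant Euler class on the universal abelian cover; here one expects to exploit the finite abelian covering structure, pushing any cohomological obstruction downstairs to a vanishing lift, analogously to the mechanism underlying Proposition \ref{prop: e=0 and c>1 implies lo}.
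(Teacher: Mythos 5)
The statement you are trying to prove is stated in the paper as a conjecture (it follows Corollary \ref{cor:universal abelian cover}), and the authors give no proof of it: the only thing the paper offers is the motivating paragraph immediately preceding it, which is explicitly conditional on Conjecture \ref{conj: lspace}. Your proposal reconstructs that discussion essentially verbatim: the fibred case is the contrapositive of Corollary \ref{cor:universal abelian cover}, and for a non-fibred hyperbolic $K$ one uses Ghiggini--Ni to rule out L-space surgeries, assumes Conjecture \ref{conj: lspace} to obtain a co-oriented taut foliation on $X(K)(n\mu+q\lambda)$, pulls it back to the universal abelian cover $X_n(K)(\mu_n+q\lambda_n)$, and extracts left-orderability of the cover's fundamental group from the associated universal circle action. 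You are right to identify the appeal to Conjecture \ref{conj: lspace} for the surgered manifold as the essential gap; that gap is precisely why the statement is a conjecture rather than a theorem, and neither the paper nor your speculative unconditional strategies (Roberts-type laminations on $X_n(K)$, branched surfaces carrying essential laminations) close it. So your write-up should be read as a correct account of the paper's motivation, not as a proof, and you correctly say as much.

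One place where your sketch diverges from the paper's intended route, to its detriment: having pulled back the foliation, the paper does not ask for the Euler class of its tangent plane field to vanish on the cover. It gets left-orderability of $\pi_1(X_n(K)(\mu_n+q\lambda_n))$ from Remark \ref{rem: rhouniv non-trivial} together with \cite[Lemma 3.1]{BRW}: the universal circle representation of $\pi_1(X(K)(n\mu+q\lambda))$ has non-abelian image, so it cannot become trivial on the normal subgroup $\pi_1(X_n(K)(\mu_n+q\lambda_n))$, whose quotient is the abelian group $H_1(X(K)(n\mu+q\lambda))\cong\mathbb Z/n$; this non-trivial circle action on the finite-index subgroup is what is fed into the cited lemma. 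Your ``secondary technical issue'' of ensuring $e(T\widetilde{\mathcal F})=0$ on the cover is therefore not part of the argument the paper has in mind, and treating it as a requirement would add a genuine extra obstacle: the cover is generally a rational homology sphere, the class lives in a finite group with no reason to vanish, and Lemma \ref{lem:Euler class vanishes links} is of no help here since the foliation is not obtained by lifting a contact structure positively transverse to a branch locus, nor does the mechanism of Proposition \ref{prop: e=0 and c>1 implies lo} apply in the absence of an open book with the relevant properties.
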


\begin{problem}
{\it Determine necessary and sufficient conditions for the universal abelian cover of an irreducible rational homology $3$-sphere $M$ to be excellent. }
\end{problem}

For instance, is the existence of a representation $\pi_1(M) \to \mbox{{\rm Homeo}}_+(S^1)$ with non-abelian image necessary and sufficient for the universal abelian cover of an irreducible rational homology $3$-sphere $M$ to be excellent?

Fix a knot $K$ in an integer homology $3$-sphere $M$ and coprime integers $p > 0$ and $q$. Let $m$ be a positive integer and set $n = mp$. We can generalize Theorem \ref{thm:conjecture fibre knots}(1) and Corollary \ref{cor:universal abelian cover} by considering the orbifold with underlying space $X(K)(p\mu + q \lambda)$ and singular set the core of the filling solid torus with isotropy groups $\mathbb Z/m$. Here $H_1(\mathcal{O}) \cong \mathbb Z/n$ and the universal abelian cover of $\mathcal{O}$ corresponds to an $n$-fold cyclic cover $X_n(K)(\mu_n + mq \lambda_n) \to X(K)(p\mu + q \lambda)$ branched over the core of the $(p\mu + q \lambda)$-filling torus with branching index $m$. When $p = 1$ and $q = 0$ this is the branched cover $\Sigma_n(K) \to M$.

\begin{corollary} 
\label{cor: knots rational homology sphere} 
Let $K$ be a hyperbolic fibred knot in an integer homology $3$-sphere $M$ with monodromy $h$ and consider coprime integers $p > 0$ and $q$ as well as a positive integer $m\geq 1$. The universal abelian cover of the orbifold with underlying space $X(K)(p\mu + q \lambda)$ and singular set the core of the filling solid torus with isotropy group $\mathbb Z/m$ is an excellent $3$-manifold if $m|pc(h) - q| \geq 1$. 
\end{corollary}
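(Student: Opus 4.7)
The plan is to deduce the corollary directly from Theorem \ref{thm:conjecture fibre knots}(2) once the universal abelian cover of the orbifold $\mathcal{O}$ has been identified with the stated manifold. Setting $n = mp$ as in the paragraph preceding the statement, the key claim is that the universal abelian cover $\tilde{\mathcal{O}}$ coincides with $X_n(K)(\mu_n + mq\lambda_n)$. Granting this, Theorem \ref{thm:conjecture fibre knots}(2) applied with filling slope $\mu_n + (mq)\lambda_n$ gives the conclusion, since its hypothesis $|nc(h) - mq| \geq 1$ becomes exactly $m|pc(h) - q| \geq 1$ after substituting $n = mp$.

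To verify the identification, I would first present $\pi_1^{orb}(\mathcal{O})$. Writing $Y = X(K)(p\mu + q\lambda)$ and $V$ for the filling solid torus with core $C$, the inclusion $\partial V \hookrightarrow V \setminus C$ is a homotopy equivalence, so van Kampen yields $\pi_1(Y \setminus C) = \pi_1(X(K))$. Imposing the orbifold relation produces $\pi_1^{orb}(\mathcal{O}) = \pi_1(X(K)) / \langle\langle (p\mu + q\lambda)^m \rangle\rangle$. Abelianizing, and using that $\lambda = 0$ in $H_1(X(K))$, gives $H_1(\mathcal{O}) = \mathbb{Z}/n$ generated by $\mu$, as asserted. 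The restriction of the abelianization map to $\pi_1(X(K))$ is the canonical epimorphism $\pi_1(X(K)) \to H_1(X(K)) \to \mathbb{Z}/n$ defining the cover $X_n(K) \to X(K)$.

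To identify the filling slope, I would analyze the lift of $V$. The meridian class $\mu_C = p\mu + q\lambda$ has image $p \in \mathbb{Z}/(mp)$, which has order exactly $m$, so the induced orbifold cover $\tilde V \to V$ is connected, has degree $n$ on $V \setminus C$ and branch index $m$ along $C$, and $\tilde V$ is a solid torus. A meridian disk in the orbifold $V$ (topologically a disk with a single cone point of order $m$) lifts to a genuine disk in $\tilde V$ whose boundary wraps $m$ times around $\mu_C$; hence the meridian of $\tilde V$ represents the class $m(p\mu + q\lambda) = n\mu + mq\lambda$, which in the basis $\{\mu_n, \lambda_n\}$ of $H_1(\partial X_n(K))$ is precisely $\mu_n + mq\lambda_n$. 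This confirms $\tilde{\mathcal{O}} = X_n(K)(\mu_n + mq\lambda_n)$, and the corollary then follows from Theorem \ref{thm:conjecture fibre knots}(2) as above.

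The only genuinely non-routine step is the orbifold-cover bookkeeping that pins down the meridian of $\tilde V$; once that calculation is in hand, everything else is immediate, and in particular no new foliation or circle-representation input beyond Theorem \ref{thm:conjecture fibre knots} is needed.
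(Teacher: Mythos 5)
Your proposal is correct and follows essentially the same route as the paper: identify the universal abelian cover of the orbifold with $X_n(K)(\mu_n + mq\lambda_n)$, where $n = mp$, and then apply Theorem \ref{thm:conjecture fibre knots}(2), whose hypothesis $|nc(h)-mq|\geq 1$ is exactly $m|pc(h)-q|\geq 1$. The paper simply asserts the covering identification (stating it in the paragraph preceding the corollary), whereas you verify it via the orbifold fundamental group and the lift of the meridian disk, which is a harmless elaboration of the same argument.
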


We also have results on cyclic branched covers of non-fibred hyperbolic links in $S^3$. Here is a special case of Theorem \ref{thm:taut foliation in cyclic covers of a braid}.

\begin{theorem} 
\label{thm:conjecture cyclic braids}  
Let $b \in B_{m}$ be an odd-strand pseudo-Anosov braid and let $c(b)$ denote its fractional Dehn twist coefficient. If $|c(b)| \geq 2$, then all even order cyclic branched covers of $\hat{b}$ are excellent.
\end{theorem}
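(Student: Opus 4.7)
The plan is to first build a co-oriented taut foliation $\mathcal F_2$ on $\Sigma_2(\hat b)$ by lifting the standard braid open book of $S^3$, then pull $\mathcal F_2$ back to each higher even branched cover $\Sigma_{2k}(\hat b)$ via the factorization $\Sigma_{2k}(\hat b) \to \Sigma_2(\hat b) \to S^3$, and apply Thurston's universal circle construction on $\Sigma_2(\hat b)$; pulling the resulting circle representation back to $\pi_1(\Sigma_{2k}(\hat b))$ and verifying vanishing of its Euler class via a covering-involution transfer argument will give left-orderability.

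First I would form the open book on $\Sigma_2(\hat b)$ by lifting the standard open book of $S^3$ associated to $b$ (binding the braid axis $A$, disk pages, monodromy $b$, with $\hat b$ transverse to the pages and meeting each in $m$ points). Since $m$ is odd, $\gcd(m,2)=1$, so the lifted binding $\widetilde A$ has a single component and the page $\widetilde D$ is the double cover of $D^2$ branched at $m$ points---a genus $(m-1)/2$ surface with one boundary circle. The lifted monodromy $\widetilde b$ is pseudo-Anosov with fractional Dehn twist coefficient $c(b)/2$ at $\partial \widetilde D$, of absolute value at least $1$ by hypothesis; Roberts' theorem as applied in \cite[Theorem 4.1]{HKM2} then produces a co-oriented taut foliation $\mathcal F_2$ on $\Sigma_2(\hat b)$ tangent to the pages away from the binding. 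For $n = 2k$, the cover $\Sigma_n(\hat b) \to S^3$ factors through $\Sigma_2(\hat b)$ via a $k$-fold cyclic branched cover $p_k : \Sigma_n(\hat b) \to \Sigma_2(\hat b)$ whose branch locus (the preimage of $\hat b$) is transverse to $\mathcal F_2$, so $\mathcal F_2$ pulls back to a co-oriented taut foliation on $\Sigma_n(\hat b)$; moreover $p_{k*} : \pi_1(\Sigma_n(\hat b)) \to \pi_1(\Sigma_2(\hat b))$ is surjective, as is standard for branched covers of connected manifolds.

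Next I would apply Thurston's universal circle construction to $\mathcal F_2$, obtaining a non-trivial representation $\rho_2 : \pi_1(\Sigma_2(\hat b)) \to \hbox{Homeo}_+(S^1)$ whose pull-back $p_k^*\rho_2 = \rho_2 \circ p_{k*}$ is non-trivial on $\pi_1(\Sigma_n(\hat b))$. By Proposition \ref{prop:Euler class}, $e(\rho_2) = e(T\mathcal F_2)$ and $e(p_k^*\rho_2) = p_k^* e(T\mathcal F_2)$, so it suffices to show $e(T\mathcal F_2) = 0$ in $H^2(\Sigma_2(\hat b))$. Let $\tau$ be the covering involution of $\pi : \Sigma_2(\hat b) \to S^3$; it preserves the open book setwise and acts on each page as the orientation-preserving hyperelliptic involution, so $\tau^* \mathcal F_2 = \mathcal F_2$ as co-oriented foliations and $\tau^* e(T\mathcal F_2) = e(T\mathcal F_2)$. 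The transfer identity $\pi^*\pi_* = 1 + \tau^*$ on $H^2(\Sigma_2(\hat b))$ together with $H^2(S^3) = 0$ forces $2\, e(T\mathcal F_2) = 0$. When $\hat b$ is a knot, $|H_1(\Sigma_2(\hat b))|$ is odd, so the 2-torsion in $H^2(\Sigma_2(\hat b)) \cong H_1(\Sigma_2(\hat b))$ is trivial and $e(T\mathcal F_2) = 0$ follows. Once $e(p_k^*\rho_2) = 0$, the representation lifts to $\thomeo_+(S^1) \leq \hbox{Homeo}_+(\mathbb R)$, so $\pi_1(\Sigma_n(\hat b))$ is left-orderable by \cite[Theorem 1.1]{BRW}, and combined with the pulled-back taut foliation we conclude that $\Sigma_n(\hat b)$ is excellent.

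The main obstacle I expect is extending the Euler class vanishing to the multi-component link case, where $H_1(\Sigma_2(\hat b))$ may carry non-trivial 2-torsion, so the transfer argument does not close by itself; resolving this would likely require a finer analysis exploiting the explicit $\tau$-action on the link components together with the structure of the Roberts--HKM foliation near the binding, or a direct open-book computation of $e(T\mathcal F_2)$ at the integer level. A secondary technicality is ensuring that the Roberts--HKM construction yields a $\tau$-equivariant foliation, which should follow from the $\tau$-invariance of the input open-book data.
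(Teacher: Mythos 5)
Your construction of the foliation on $\Sigma_2(\hat b)$ (lift the braid open book, note $c(\bar\psi)=c(b)/2\geq 1$ with connected binding since $m$ is odd, apply Theorem \ref{thm:cgeq1}) and your passage to the higher even covers (branched covering $\Sigma_{2k}(\hat b)\to\Sigma_2(\hat b)$ with $\pi_1$-surjectivity, plus lifting the foliation along the transverse branch locus) coincide with the paper's argument. The genuine gap is in your route to $e(T\mathcal F_2)=0$. The transfer identity only gives $2\,e(T\mathcal F_2)=0$, and the hypothesis that $m$ is odd does \emph{not} make $\hat b$ a knot: an odd-strand braid closure can perfectly well be a link with several components (any $b\in B_3$ whose underlying permutation fixes a strand, say), and then $H_1(\Sigma_2(\hat b))$ may contain $2$-torsion, so the argument does not close--exactly the obstacle you flag yourself but do not resolve. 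Moreover, even the identity $2e=0$ presupposes $\tau^*e(T\mathcal F_2)=e(T\mathcal F_2)$, and the Honda--Kazez--Mati\'c/Roberts construction is not performed equivariantly, so this too is unjustified as written; finally, the case $b_1(\Sigma_2(\hat b))>0$ needs to be split off first (as the paper does), since the universal-circle step and Theorem \ref{prop:taut foliation left orderability} are formulated for rational homology spheres.

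The paper closes precisely this gap, with no equivariance or torsion hypotheses, by routing the Euler class computation through contact structures rather than through the involution: Theorem \ref{thm:cgeq1} says the tangent field of the foliation is homotopic to the contact structure supported by the lifted open book $(\bar S_2,\bar\psi)$, and this contact structure is isotopic to the lift, in the sense of \S\ref{subsec: contact}, of the contact structure on $S^3$ supported by $(\bar D_m,\bar b)$; Lemma \ref{lem:Euler class vanishes links} then shows that the Euler class of any such lifted contact structure vanishes, by an explicit relative-cocycle computation on the meridian disks (the obstruction class equals $(1-n)\delta(u)$ in $H^2(\Sigma_n,X_n)$, hence maps to $0$ in $H^2(\Sigma_n)$). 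This is exactly the ``finer analysis'' you anticipate needing for the multi-component case, and without it (or some substitute valid with integer coefficients for link closures) your proof of the theorem as stated is incomplete; with it, the rest of your outline matches the paper's proof.
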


\begin{remark}
It is useful to note that under the hypothesis that $|c(b)| \geq 2$, work of Ito and Kawamuro (\cite[Theorem 8.4]{IK}) implies that $b$ is a pseudo-Anosov braid if and only if $\hat b$ is a hyperbolic link.
\end{remark}

Theorem \ref{thm:conjecture cyclic braids} combines with results of Baldwin (\cite{Bal}) and Li-Watson (\cite{LW}) to prove that: 

\begin{theorem} 
\label{thm:lspace genus one open book decomposition}
Conjecture \ref{conj: lspace} holds for irreducible $3$-manifolds which admit genus one open book decompositions with connected binding. 
\end{theorem}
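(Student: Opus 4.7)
The plan is to use the Birman--Hilden correspondence to realise every such $M$ as the $2$-fold branched cover of a closed $3$-braid, then dispatch the resulting cases by Nielsen--Thurston type, invoking Theorem \ref{thm:conjecture cyclic braids} in the generic pseudo-Anosov case and the results of Baldwin \cite{Bal} and Li--Watson \cite{LW} in the remaining cases.

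First, I would record the classical fact that a genus one open book $(T_{1,1}, h)$ with connected binding on a closed $3$-manifold $M$ corresponds via Birman--Hilden to a $3$-braid $b\in B_3$, well-defined up to conjugacy and central factors, with the property that $M\cong \Sigma_2(\hat b)$. Under this dictionary, $h$ is periodic, reducible, or pseudo-Anosov exactly when $b$ is, and the fractional Dehn twist coefficients $c(h)$ and $c(b)$ differ by a fixed rational factor (coming from the branched double covering $T_{1,1}\to D^2$).

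Next, I would split into three cases on the Nielsen--Thurston type of $b$. If $b$ is periodic, then $\hat b$ is a torus link and $M$ is a Seifert fibred space; if $b$ is reducible, then $b$ lies in the twist subgroup of $B_3$, so $\hat b$ is a $2$-bridge-type link and $M$ is again Seifert fibred (under the standing hypothesis that $M$ is irreducible). Conjecture \ref{conj: lspace} is already known for non-hyperbolic geometric $3$-manifolds by \cite{BGW, BRW, LS}, so both cases are immediate. If $b$ is pseudo-Anosov with $|c(b)|\geq 2$, then Theorem \ref{thm:conjecture cyclic braids} applied with $n=2$ shows directly that $M=\Sigma_2(\hat b)$ is excellent, so Conjecture \ref{conj: lspace} holds.

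It remains to handle pseudo-Anosov $b$ with $|c(b)|<2$. Here I appeal to Baldwin's classification in \cite{Bal} of the closed $3$-braids whose $2$-fold branched covers are Heegaard Floer L-spaces, phrased in terms of Murasugi's normal form $b=\Delta^{2k}R$, together with the complementary construction of co-oriented taut foliations and left-invariant orderings on $\pi_1$ given in \cite{LW}. These two inputs collectively place every residual $M=\Sigma_2(\hat b)$ on exactly one side of the excellent versus total L-space dichotomy. The main obstacle, and the real content of the proof, is the bookkeeping in this last step: one must enumerate, in Murasugi normal form, precisely which pseudo-Anosov $3$-braids satisfy $|c(b)|<2$, then match them against the classifications in \cite{Bal, LW} to confirm that no case is omitted and that \emph{every} such irreducible $M$ is either an L-space with non-left-orderable $\pi_1$ admitting no co-oriented taut foliation, or else admits a co-oriented taut foliation and has left-orderable fundamental group.
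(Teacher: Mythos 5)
Your overall route is the same as the paper's: realise $M$ as $\Sigma_2(\hat b)$ for some $b\in B_3$, split according to Nielsen--Thurston type (equivalently Murasugi's normal form, Theorem \ref{thm:classification of 3 braids}), apply Theorem \ref{thm:conjecture cyclic braids} with $n=2$ when $|c(b)|\geq 2$, and fall back on Baldwin \cite{Bal} and Li--Watson \cite{LW} otherwise. However, two of your cases contain genuine gaps. First, the reducible case is not Seifert fibred in general. For $b=C^d\sigma_2^m$ with $|d|\geq 2$ and $m\neq 0$, the $b$-invariant circle enclosing two punctures yields a torus in $S^3$ splitting $\hat b$ into a $(2,m+2d)$-torus link in one solid torus and a $(d,1)$-curve in the other; the double branched cover is then $T^2\times I$ glued to a Seifert piece with incompressible boundary, i.e.\ a graph manifold containing an essential torus. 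Handling these requires the graph-manifold results \cite{BC,HRRW}, not the non-hyperbolic geometric case \cite{BGW,BRW,LS}; you also need to dispose of $d=0$ (split closure, excluded by irreducibility) and $|d|=1$ (which lands in Baldwin's list (2), with non-left-orderability from \cite{LW} and non-existence of taut foliations from \cite{OS1,Bn,KR2}).

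Second, and more seriously, the residual pseudo-Anosov case is not ``bookkeeping'': the matching between $\{|c(b)|<2\}$ and Baldwin's L-space list requires an exact evaluation of $c(b)$ in Murasugi normal form, and that computation is the real content of the paper's proof. Writing $b=C^d\,\sigma_1^{b_1}\sigma_2^{-c_1}\cdots\sigma_1^{b_k}\sigma_2^{-c_k}$, the paper proves $c(\sigma_1^{b_1}\sigma_2^{-c_1}\cdots\sigma_1^{b_k}\sigma_2^{-c_k})=0$ by conjugating by $\Delta_3$ (which swaps $\sigma_1$ and $\sigma_2$ and preserves $c$ by Lemma \ref{lem:poincare translation number}) and using that a Dehornoy-positive (resp.\ negative) braid has $c\geq 0$ (resp.\ $c\leq 0$), Remark \ref{rmk:FDTC braid ordering}; hence $c(b)=d$ exactly. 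Only with this identity do you know that every pseudo-Anosov $b$ with $|c(b)|<2$ has $d\in\{-1,0,1\}$, so that $\Sigma_2(\hat b)$ is an L-space by Theorem \ref{thm: baldwin}(1) and is handled by \cite{LW} and \cite{OS1,Bn,KR2}, while $|c(b)|\geq 2$ is precisely $|d|\geq 2$, where Theorem \ref{thm:conjecture cyclic braids} applies. Malyutin's inequality $\lfloor b\rfloor_D\leq |c(b)|\leq \lfloor b\rfloor_D+1$ is not sharp enough for this: a priori it leaves open, say, a pseudo-Anosov braid with $d=2$ but $|c(b)|<2$, which your argument would cover by neither input. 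Until you supply this exact determination of $c(b)$ (or an equivalent), the proof is incomplete.
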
 

In its turn, Theorem \ref{thm:lspace genus one open book decomposition} combines with Theorem \ref{thm:conjecture fibre knots} to determine precisely which branched covers of genus one fibred knots $K$ are excellent and which are total L-spaces. To describe this, let $T_1$ be the fibre of such a knot. It is known that the mapping class group ${\rm Mod}(T_1)$ is generated by two right-handed Dehn twists $T_{c_1}$ and $T_{c_2}$ (cf. \S \ref{sec:L-space conjecture genus one open books}, especially Figure \ref{fig:double cover 3-punctured disk}). Let 
 $$\delta = (T_{c_1}T_{c_2})^3$$ 
and note that $\delta^2$ is the right-handed Dehn twist along $\partial T_1$.}
 
\begin{corollary} 
\label{cor: branched cover genus 1} 
Suppose that $K$ is a genus one fibred knot with monodromy $h$ in a closed, connected, orientable and irreducible $3$-manifold $M$. Then  for each $n \geq 2$, $\Sigma_n(K)$ is either excellent or a total L-space. Further, $\Sigma_n(K)$ is a total L-space if and only if 

$(1)$ $h$ is pseudo-Anosov, $c(h) = 0$, and $n \geq 2$.

$(2)$ $h$ is periodic and up to replacing it by a conjugate homeomorphism, $h$ and $n$ are given by 
$$h = \left\{ \begin{array}{rll} 
T_{c_1}^{-1} T_{c_2}^{-1} & \hbox{ and } & n \leq 5 \\ 
\delta T_{c_1}^{-1} T_{c_2}^{-1} &  \hbox{ and } & n= 2 \\ 
T_{c_1}^{-2} T_{c_2}^{-1}  & \hbox{ and } & n \leq 3 \\
\delta T_{c_1}^{-2} T_{c_2}^{-1} &  \hbox{ and } & n \leq 3 \\
T_{c_1}^{-3} T_{c_2}^{-1}  & \hbox{ and } & n = 2 \\
\delta T_{c_1}^{-3} T_{c_2}^{-1}  & \hbox{ and } &  n \leq 5 .
\end{array} \right.$$
\end{corollary}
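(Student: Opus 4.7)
The plan is to combine Theorem \ref{thm:lspace genus one open book decomposition}---which establishes Conjecture \ref{conj: lspace} in this setting---with a Nielsen--Thurston case analysis on the monodromy $h \in {\rm Mod}(T_1, \partial T_1)$.

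First I would verify that $\Sigma_n(K)$ carries a genus one open book decomposition with connected binding: the canonical $n$-fold cyclic cover $X_n(K) \to X(K)$ inherits a fibration over $S^1$ with fibre $T_1$ and monodromy $h^n$, and the Dehn filling $\Sigma_n(K) = X_n(K)(\mu_n)$ caps off $\partial X_n(K)$ with a solid torus whose core is the new binding. After checking that $\Sigma_n(K)$ is irreducible (in the relevant cases this follows from the irreducibility of $M$ together with a standard branched cover argument), Theorem \ref{thm:lspace genus one open book decomposition} immediately yields the required dichotomy: $\Sigma_n(K)$ is excellent or a total L-space. Thus to prove a given $\Sigma_n(K)$ is excellent it suffices to show it is not an L-space, and conversely.

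For the characterisation, I would distinguish the Nielsen--Thurston type of $h$. When $h$ is pseudo-Anosov with $c(h)\neq 0$, the relation $c(h^n) = nc(h)$ allows Theorem \ref{thm:conjecture fibre knots}(1) to conclude that $\Sigma_n(K)$ is excellent whenever $n|c(h)|\geq 1$. For the finitely many remaining small values of $n$, I would invoke the L-space criterion of Baldwin \cite{Bal} together with Li--Watson \cite{LW} for genus one open books with connected binding to show that $\Sigma_n(K)$ is not an L-space, and then apply the dichotomy. When $h$ is pseudo-Anosov with $c(h)=0$, the same criterion, applied in the converse direction, shows that $(T_1, h^n)$ produces an L-space for every $n \geq 2$, so the dichotomy forces $\Sigma_n(K)$ to be a total L-space. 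Reducible monodromies either reduce to the periodic discussion (when the reducing curve is boundary-parallel, so $h$ is a power of $\delta$) or are incompatible with the irreducibility of $M$.

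For periodic $h$, the element lies in one of the finitely many conjugacy classes of torsion elements of ${\rm Mod}(T_1, \partial T_1) \cong B_3$, organised by their image in $PSL(2,\mathbb{Z})$ together with the exponent of the boundary-parallel Dehn twist $\delta^2$. Each such $h$ places an explicit Seifert fibered structure on $M$ and hence on $\Sigma_n(K)$, and the L-space condition for a Seifert fibered rational homology sphere becomes a concrete numerical test on the Seifert invariants. The main obstacle---and the step that actually produces the six-line list in the statement---is to enumerate the periodic conjugacy classes correctly modulo the ambiguity of adjusting by $\delta$, compute the Seifert invariants of each resulting $\Sigma_n(K)$, and verify the L-space test pair by pair $(h, n)$.
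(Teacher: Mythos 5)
Your overall frame agrees with the paper's: cap off the lifted fibration to get a genus one open book on $\Sigma_n(K)$, use Theorem \ref{thm:lspace genus one open book decomposition} for the excellent/total L-space dichotomy, and then decide L-space-ness by a Nielsen--Thurston case analysis of $h$. However, your treatment of the reducible case contains a genuine error. You claim reducible monodromies either reduce to the periodic discussion or are ``incompatible with the irreducibility of $M$''. That is false: under ${\rm Mod}(T_1)\cong B_3$ the reducible classes are exactly the conjugates of $C^d\sigma_2^m$ (Theorem \ref{thm:classification of 3 braids}(2)); their reducing curves are essential non-separating curves in $T_1$, and the resulting $M$ and $\Sigma_n(K)$ are toroidal (typically graph) manifolds, which are perfectly compatible with irreducibility -- irreducibility forbids essential spheres, not tori. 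This family must be handled, and it is precisely here that the paper computes $b(h^n)=C^{nd}\sigma_2^{nm}$ and observes $nd\neq\pm1$ for $n\geq 2$, so by Theorem \ref{thm: baldwin} these covers are never L-spaces and hence are excellent. Omitting this case leaves the ``only if'' direction of the corollary unproved.

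Two further points. In the pseudo-Anosov case, applying Baldwin's criterion in either direction requires translating the hypothesis on $c(h)$ into the exponent $d$ of the full twist in the normal form $b(h)=C^d\,\sigma_1\sigma_2^{-a_1}\cdots\sigma_1\sigma_2^{-a_k}$; the paper does this via the identity $2c(h)=c(b(h))=d$, which rests on Lemma \ref{lem:FDTC of lifting monodromy} together with the Dehornoy-order computation carried out in the proof of Theorem \ref{thm:lspace genus one open book decomposition}. Without that bridge, your assertion that $c(h)=0$ forces $(T_1,h^n)$ to yield an L-space for all $n\geq 2$ is unsupported; the same identity also shows that $c(h)\in\tfrac12\mathbb Z$, so when $c(h)\neq 0$ there are in fact no ``remaining small values of $n$'' to treat separately (and Li--Watson is about non-left-orderability of the groups on Baldwin's list, not about showing a manifold fails to be an L-space). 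Finally, for the periodic case your route via Seifert invariants and the Seifert-fibred L-space criterion is a legitimate alternative to the paper's method, which simply raises $b(h)=C^d\sigma_1^m\sigma_2^{-1}$ to the $n$-th power using the relations satisfied by $w_m=\sigma_1^{-m}\sigma_2^{-1}$ and compares with Baldwin's list; but you explicitly defer the enumeration and the pair-by-pair verification, and this computation is exactly what produces the six lines of the statement. (Also, these periodic classes are not torsion elements of $B_3$, which is torsion-free; periodicity means freely isotopic to a finite-order homeomorphism.) As written, the proposal establishes the dichotomy but not the characterisation.
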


Next we consider satellite links.

In \cite{GLid1}, Gordon and Lidman studied the cyclic branched covers of $(p,q)$-cable knots in $S^3$. These are satellite knots whose patterns are $(p,q)$-torus knots embedded standardly as a $q$-braid in a solid torus. They showed that the $n$-fold cyclic branched covers of $(p,q)$-cable knots are always excellent, except possibly for the case $n=q=2$ (\cite[Theorem 1.3]{GLid1}). In the latter case they showed that the $2$-fold branched covers of a $(p,2)$-cable knots are never L-spaces \cite[Theorem 1]{GLid2}, and hence the truth of Conjecture \ref{conj: lspace} would imply that they are excellent.  

\begin{conjecture} {\rm (Gordon-Lidman)}
 \label{conj:cyclic satellite}
{\it The $n$-fold cyclic branched cover  of a prime, satellite knot is excellent.} 
\end{conjecture}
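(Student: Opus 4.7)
The plan is to attack the conjecture by induction on the depth of the JSJ decomposition of $X(K)$, reducing the global problem to local claims about the pieces of $\Sigma_n(K)$ and then gluing. Let $K \subset S^3$ be a prime satellite knot with companion $C$, pattern $P$ in the solid torus $V$, and winding number $w$, so that $X(K) = X(C) \cup_T X(P, V)$ where $T = \partial V$. Setting $d = \gcd(n,w)$, the $n$-fold cyclic branched cover $\Sigma_n(K)$ inherits a decomposition along the preimage of $T$ (which has $d$ components) into the $n$-fold cover $\Sigma_n(P,V)$ of $V$ branched along $P$, together with $d$ copies of the $(n/d)$-fold unbranched cyclic cover of $X(C)$, glued along the $d$ preimage tori. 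Our task is to produce a co-oriented taut foliation on each piece, match them along the JSJ tori, and then derive left-orderability via the universal circle construction together with the Euler class vanishing principle developed elsewhere in the paper.

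First I would establish base cases corresponding to the innermost pieces of the JSJ tree. These are patterns $P \subset V$ for which the interior of $V \setminus P$ is geometric: either Seifert fibred (so $P$ is a cable, handled in \cite{GLid1,GLid2}) or hyperbolic. In the hyperbolic case I would address two subcases. When $P$ is fibred in $V$, a relative version of Theorem \ref{thm:conjecture fibre knots} should show that $\Sigma_n(P,V)$ admits a co-oriented taut foliation with left-orderable $\pi_1$, with flexibility in the boundary slope on each boundary torus controlled by the fractional Dehn twist coefficient of the monodromy, in the spirit of Theorem \ref{thm:conjecture fibre knots}(2). When $P$ is braided in $V$, Theorem \ref{thm:conjecture cyclic braids} furnishes the analogous statement. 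For hyperbolic patterns that are neither fibred nor braided, a new construction of taut foliations in their branched cyclic covers will be needed.

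Next I would handle the companion side by an inner induction on complexity. The unbranched $(n/d)$-fold cyclic cover of $X(C)$ can either be interpreted as a piece of some $\Sigma_{n/d}(C')$ for a suitable satellite of $C$, or Dehn filled to recover a branched cover of $C$. The inductive hypothesis, combined with the Dehn filling flexibility of Theorem \ref{thm:conjecture fibre knots}(2) and Corollary \ref{cor:universal abelian cover}, should yield a co-oriented taut foliation on each companion component with controlled boundary slope on its boundary torus. A global taut foliation on $\Sigma_n(K)$ is then produced by gluing along the preimage of $T$, using the standard fact that two Reebless foliations transverse to a torus on each side fit together once their slopes on the torus agree. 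Left-orderability of $\pi_1(\Sigma_n(K))$ follows by applying the universal circle construction to the resulting global foliation and appealing to the Euler class vanishing argument used throughout the paper.

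The principal obstacle, and the reason the conjecture remains open, is twofold. First, the slopes induced on each preimage of $T$ by the pattern and companion foliations are rigid: they are determined by the respective degeneracy slopes, which depend delicately on the fractional Dehn twist coefficients, on $w$, and on $n$. Forcing the two sides to agree is an arithmetic constraint that can fail outright for particular combinations of pattern, companion, and $n$; in such cases one must instead glue \emph{partial} foliations, in the tradition of Gabai's sutured manifold theory and of Li--Roberts' branched-surface methods, which brings its own technical difficulties. Second, and more seriously, the base case of a hyperbolic pattern that is neither fibred nor braided lies outside the reach of every taut-foliation construction presently available; overcoming this will probably require a genuinely new input, for instance a relative universal circle construction for hyperbolic $3$-manifolds with toroidal boundary that produces left-orderable boundary slope data compatible with the gluing framework above.
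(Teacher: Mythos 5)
You have not given a proof, and none is expected: the statement in question is the Gordon--Lidman conjecture, which the paper records as open and does not prove (it only verifies special cases). Your text is a program, and the two obstructions you yourself flag are exactly where the mathematics is missing rather than merely technical: (i) hyperbolic patterns that are neither fibred nor braided lie outside all known constructions of co-oriented taut foliations on their cyclic branched covers, and (ii) the boundary slopes forced on the preimage tori by the pattern-side and companion-side foliations are constrained by degeneracy slopes/fractional Dehn twist coefficients and need not match, so the gluing step can fail outright for particular $(P,C,n)$. The reduction along the JSJ tree and the appeal to a ``relative'' version of Theorem \ref{thm:conjecture fibre knots} are plausible but unproved as stated, so the proposal should be read as a plan consistent with the conjecture's open status, not as an argument.

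It is worth comparing your plan with what the paper actually does in its partial results. Theorems \ref{thm:satellite c(h)>0 n>>0} and \ref{thm: satellite knot c(b) and c(h) nonnegative} implement precisely your decomposition in the special case where the pattern is the closure of a pseudo-Anosov $m$-braid, the companion is a fibred hyperbolic knot, and $\gcd(m,n)=1$ (so the preimage of the companion torus is a single torus): taut foliations are produced on $N_n(P)$ and on $X_n(C)$ meeting that torus in linear foliations of matching slope ($\mu-\lambda$ on one side, $\mu_n-\lambda_n$ on the other), using Theorem \ref{thm:cgeq1} together with the FDTC computations of Lemma \ref{lem:FDTC of lifting monodromy}. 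The significant divergence from your plan is the last step: the paper does \emph{not} run the universal-circle/Euler-class argument on the glued foliation of $\Sigma_n(L)$ -- the Euler class of the glued tangent plane field is not under control -- but instead applies that argument piecewise to the Dehn fillings $N_n(P)(\mu-\lambda)$ and $X_n(C)(\mu_n-\lambda_n)$ (with Euler class vanishing supplied by Lemma \ref{lem:Euler class vanishes links} and Lemma \ref{lemma: homology of exterior}), and then deduces left-orderability of the amalgamated group from \cite[Theorem 2.7]{CLW}. If you pursue your program, expect to need this piecewise strategy rather than a global Euler-class argument, and expect the two admitted gaps above to require genuinely new constructions.
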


Satellite links whose patterns are closed braids and whose companions are fibred are a particularly interesting class to investigate as, for instance, all satellite L-space knots in $S^3$ fall into this category (\cite[Theorem 7.3, Theorem 7.4]{BM}; also see \cite[Theorem 35]{HRW} and \cite[Proposition 3.3]{Hom}). Theorem \ref{thm:satellite c(h)>0 n>>0} and Corollary \ref{cor:satellite c(h)>0 n>>0} verify special cases of Conjecture \ref{conj:cyclic satellite}. 

\begin{theorem} 
\label{thm:satellite c(h)>0 n>>0}
Assume that $L$ is a satellite link in an integer homology $3$-sphere $M$ whose pattern is contained in its solid torus as the closure of an $m$-strand pseudo-Anosov braid and whose companion is a fibred hyperbolic knot in $M$ with monodromy $h$. 

$(1)$ If $c(h)=0$, then the $n$-fold cyclic branched cover of $L$ is excellent whenever $\gcd(m,n)=1$.

$(2)$ If $c(h)\neq 0$, then the $n$-fold cyclic branched cover of $L$ is excellent when $\gcd(m,n) =1$ and $n\geq \frac{2}{|c(h)|}$. 

\end{theorem}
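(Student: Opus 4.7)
Following the template of Theorem \ref{thm:conjecture fibre knots}, I would build a co-oriented taut foliation $\mathcal F$ on the $n$-fold branched cyclic cover $\Sigma_n(L)$ whose tangent Euler class $e(T\mathcal F)\in H^2(\Sigma_n(L))$ vanishes; combining this with Proposition \ref{prop:Euler class} and the universal circle construction then yields a non-trivial homomorphism $\pi_1(\Sigma_n(L))\to \thomeo_+(S^1)\le \hbox{Homeo}_+(\mathbb R)$, so that $\Sigma_n(L)$ is excellent. Let $V$ denote the solid torus containing the pattern and set $T=\partial V$, splitting $X(L)$ as $X(\hat b)\cup_T X(K)$ with $X(\hat b)=V\smallsetminus N(\hat b)$. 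Because $\gcd(m,n)=1$ and the linking number of $\hat b$ with the core $K$ is $m$, the branching homomorphism $H_1(X(L))\to \mathbb Z/n$ restricts to an epimorphism on each side. It follows that $\Sigma_n(L)$ is obtained by gluing the full cyclic cover $X_n(K)$ along a single boundary torus $T_n$ to a connected $n$-fold cyclic cover $W_n$ of $V$ branched over $\hat b$.

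\textbf{Foliations and matching slopes.} Since $b$ is pseudo-Anosov, the exterior of $\hat b$ in $V$ fibres over $S^1$ with pseudo-Anosov monodromy, and this fibration pulls back to a surface fibration on $W_n$. Adapting the Roberts-type perturbation arguments used in the proofs of Theorems \ref{thm:conjecture fibre knots} and \ref{thm:conjecture cyclic braids}, one obtains a family of co-oriented taut foliations on $W_n$ transverse to $\partial W_n=T_n$ whose boundary slopes fill an open interval in the slope space of $T_n$ centred on the fibre slope inherited from the braid fibration. On the companion side, applying (the proof of) Theorem \ref{thm:conjecture fibre knots}(2) produces a co-oriented taut foliation on $X_n(K)$ transverse to its boundary realising slope $\mu_n + q\lambda_n$ for every integer $q$ with $|nc(h)-q|\ge 1$; when $c(h)=0$ the pulled-back fibration alone gives a foliation of boundary slope $\lambda_n$. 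Gluing requires these two boundary slopes to agree on $T_n$: in case (1) the fibre slope on each side equals $\lambda_n$ and the match is automatic, while in case (2) the hypothesis $n|c(h)|\ge 2$ is exactly what is needed to produce an integer $q$ satisfying $|nc(h)-q|\ge 1$ whose slope lies in the open interval available from the pattern side.

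\textbf{Main obstacle: vanishing of $e(T\mathcal F)$.} The hardest step, and the heart of the argument, is verifying $e(T\mathcal F)=0$ on the glued $\mathcal F$. Each fibration (pattern-side and companion-side) trivialises the tangent plane bundle of its underlying foliation, and transversality to $T_n$ supplies a canonical nowhere-zero boundary section, so the relative Euler classes on $W_n$ and on $X_n(K)$ are defined. The relative class on $X_n(K)$ vanishes because $K$ is null-homologous in the integer homology sphere $M$, so the lifted knot fibration trivialises the tangent bundle of $\mathcal F|_{X_n(K)}$ rel boundary; on $W_n$ the braid fibration gives a relative class whose only obstruction is supported on $T_n$ and, by the slope matching above, cancels the boundary contribution of the companion side. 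A Mayer--Vietoris diagram chase across $T_n$ then shows that the two relative classes assemble to a globally trivial class in $H^2(\Sigma_n(L))$. Once $e(T\mathcal F)=0$ is established, Proposition \ref{prop:Euler class} and the universal circle construction deliver left-orderability, completing the proof.
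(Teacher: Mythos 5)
There are genuine gaps, both in the slope bookkeeping and, more seriously, in the final left-orderability step.

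First, the matching mechanism you describe is not the right one. Under the gluing $\varphi:\partial N_n(P)\to\partial X_n(C)$ that assembles $\Sigma_n(L)$, the pattern-side fibre slope $\lambda$ (carried by $\partial \bar{S}_n$) is identified with the companion-side \emph{meridian} $\mu_n$, not with $\lambda_n$; so in your case (1) the two fibrations do not match ``automatically'', and in fact the paper never glues fibration to fibration. What the paper does, in both cases, is produce on the pattern side exactly one slope, $\mu-\lambda$ if $c(b)\ge 0$ or $\mu+\lambda$ if $c(b)<0$, by applying Theorem \ref{thm:cgeq1} to the open book $(\bar{S}_n,T_{\partial}^{\pm 1}\circ\bar\psi)$, whose fractional Dehn twist coefficient is $\pm 1+c(b)/n$ (Lemma \ref{lem:FDTC of lifting monodromy}); this construction is one-sided, governed by the sign of $c(b)$, which the hypotheses do not control. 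The companion side must then realise the corresponding slope $\mu_n\mp\lambda_n$, and the hypothesis $c(h)=0$ or $n|c(h)|\ge 2$ is precisely what makes \emph{both} $q=1$ and $q=-1$ admissible in Theorem \ref{thm:conjecture fibre knots}(2). Your claimed ``open interval of pattern-side slopes centred on the fibre slope'' is unsupported, and if it were true as stated it would contain both $\mu\pm\lambda$ and make the hypothesis on $nc(h)$ unnecessary -- a sign that this is not where the constraint lives.

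Second, the step you yourself flag as the heart of the argument -- vanishing of $e(T\mathcal F)$ for the glued foliation via relative Euler classes and a Mayer--Vietoris cancellation -- is asserted rather than proved, and the assertions are doubtful: the companion-side piece of $\mathcal F$ is not the fibration but a Roberts--Honda--Kazez--Mati\'c foliation transverse to $T_n$, so ``the lifted knot fibration trivialises the tangent bundle of $\mathcal F|_{X_n(K)}$ rel boundary'' does not follow from null-homology, and the claimed cancellation of the pattern-side boundary contribution is not an argument. The paper avoids any global Euler class computation on $\Sigma_n(L)$: it proves that suitable Dehn \emph{fillings} of each piece are excellent -- the companion side by Theorem \ref{thm:conjecture fibre knots}(2), the pattern side by observing that $N_n(P)(\mu\mp\lambda)$ is the open book $(\bar{S}_n,T_{\partial}^{\pm1}\circ\bar\psi)$ whose supported contact structure is the lift of a contact structure on $S^3$ supported by a braid open book, so that its Euler class vanishes by Lemma \ref{lem:Euler class vanishes links} and Theorem \ref{prop:taut foliation left orderability} applies -- and then deduces left-orderability of $\pi_1(\Sigma_n(L))$ from the gluing theorem of Clay--Lidman--Watson \cite{CLW}. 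Some such gluing result (or a genuine proof of your relative Euler class claims) is exactly what your sketch is missing; note also that the pattern-side input Lemma \ref{lem:Euler class vanishes links} is indispensable and absent from your argument, since unlike the companion side there is no a priori reason for the relevant second cohomology on that side to vanish.
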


By Proposition \ref{prop:lower bound FDTC}, if $c(h) \ne 0$, then $|c(h)| \geq \frac{1}{2(2g(C)-1)}$ where $g(C)$ is the genus of the companion knot $C$ in Theorem \ref{thm:satellite c(h)>0 n>>0}. Hence the condition $n\geq \frac{2}{|c(h)|}$ in Theorem \ref{thm:satellite c(h)>0 n>>0}(2) holds if $n\geq 4(2g(C)-1)$.

\begin{corollary} 
\label{cor:satellite c(h)>0 n>>0} 
Assume that $L$ is a satellite link in an integer homology $3$-sphere $M$ whose pattern is contained in its solid torus as the closure of an $m$-strand pseudo-Anosov braid and whose companion is a fibred hyperbolic knot. Then the $n$-fold cyclic branched cover of $L$ is excellent when $\gcd(m,n) =1$ and $n \gg 0$. 
\qed
\end{corollary}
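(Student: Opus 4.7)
The plan is to deduce Corollary \ref{cor:satellite c(h)>0 n>>0} directly from Theorem \ref{thm:satellite c(h)>0 n>>0} by distinguishing the two cases according to whether or not the monodromy of the fibred hyperbolic companion has vanishing fractional Dehn twist coefficient. Let $C$ denote the companion and $h$ its monodromy. The hypotheses of the corollary match those of the theorem verbatim, so one of parts (1) or (2) of the theorem must apply; the only thing the corollary does differently is to absorb all quantitative thresholds on $n$ into the informal ``$n \gg 0$.''

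If $c(h)=0$, Theorem \ref{thm:satellite c(h)>0 n>>0}(1) already asserts that the $n$-fold cyclic branched cover of $L$ is excellent for every $n$ coprime to $m$, with no lower bound on $n$ required; in this case ``$n \gg 0$'' adds nothing beyond the coprimality constraint. If instead $c(h)\ne 0$, then $|c(h)|$ is a fixed positive rational number determined by $C$, and the threshold $N_0 := \lceil 2/|c(h)|\rceil$ is a finite constant independent of $n$. Theorem \ref{thm:satellite c(h)>0 n>>0}(2) then gives excellence of the $n$-fold cyclic branched cover of $L$ for every $n$ satisfying $\gcd(m,n)=1$ and $n \geq N_0$, which is exactly a ``$n \gg 0$, $\gcd(m,n)=1$'' condition.

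Since the two cases exhaust the possible values of $c(h)$ and each yields excellence for all sufficiently large $n$ coprime to $m$, the corollary follows. I do not anticipate any genuine obstacle: the substantive content (constructing taut foliations and left orders on the cyclic branched covers under the coprimality and fractional-Dehn-twist hypotheses) has already been established in Theorem \ref{thm:satellite c(h)>0 n>>0}, and the corollary is purely a qualitative repackaging that trades the explicit bound $n \geq 2/|c(h)|$ for the uniform statement ``$n \gg 0$.''
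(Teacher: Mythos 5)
Your proposal is correct and matches the paper's intent exactly: the corollary is stated with a \qed precisely because it is the immediate consequence of Theorem \ref{thm:satellite c(h)>0 n>>0} obtained by splitting into the cases $c(h)=0$ (part (1), all $n$ coprime to $m$) and $c(h)\neq 0$ (part (2), $n \geq 2/|c(h)|$, a finite threshold since $|c(h)|$ is a fixed positive rational — indeed bounded below by $\frac{1}{2(2g(C)-1)}$ via Proposition \ref{prop:lower bound FDTC}). No further comment is needed.
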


Consider an L-space satellite knot $K$. Baker and Motegi have shown that the pattern is a closed braid \cite[\S 7]{BM}. Further, Hanselman, Rasmussen and Watson \cite{HRW} have shown that the companion is also an L-space knot. Hence the companion knot $C$ is fibred and strongly quasipositive (cf. \cite[Theorem 1.2]{Hed}, \cite[Corollary 1.3]{Ni} and the calculations of \cite{OS2}), so its fractional Dehn twist coefficient is non-zero (cf. \cite{Hed, HKM2}). Up to replacing $K$ by its mirror image, we can suppose that the fractional Dehn twist of the monodromy  of the companion knot $C$ is positive. 

Boileau, Boyer and Gordon have shown that the cyclic branched covers of satellite L-space knots are never L-spaces \cite[Corollary 6.4]{BBG}. 
In the case that both pattern and companion are hyperbolic, and the fractional Dehn twist coefficient of the pattern braid is nonnegative (cf. \cite[Question 1.8]{Hom}), Theorem \ref{thm: satellite knot c(b) and c(h) nonnegative} shows that $\Sigma_n(K)$ is excellent whenever $n$ is relatively prime to the braid index of the pattern.

\begin{theorem} 
\label{thm: satellite knot c(b) and c(h) nonnegative}
Assume that $L$ is a satellite link in an integer homology $3$-sphere $M$ whose pattern is contained in its solid torus as the closure of an $m$-strand pseudo-Anosov braid $b$ and whose companion is a fibred hyperbolic knot with monodromy $h$. Suppose that the fractional Dehn twist coefficients $c(b)$ and $c(h)$ are non-negative. Then for $n\geq 2$ relatively prime to $m$, the $n$-fold cyclic branched cover of $L$ is excellent.
\end{theorem}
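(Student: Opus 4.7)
The plan is to mimic the strategy that must underlie Theorem \ref{thm:satellite c(h)>0 n>>0} and extend it using the nonnegativity of $c(b)$. Let $C$ be the hyperbolic fibred companion with exterior $X(C)$, let $V$ be the solid torus containing the pattern $P = \hat b \subset V$, and let $T = \partial V$. The first step is to decompose
$$\Sigma_n(L) \;=\; X_n(C) \cup_{\widetilde T} \Sigma_n^V(P),$$
where $X_n(C)$ is the $n$-fold cyclic cover of $X(C)$ and $\Sigma_n^V(P)$ is the $n$-fold cyclic cover of $V$ branched over $P$. Because $\gcd(m,n)=1$, the preimage $\widetilde T$ of $T$ is a single incompressible torus and $X_n(C)$ is connected; this is the decomposition needed to match boundary data on each side.

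Next I would build a co-oriented taut foliation on each piece with compatible boundary behaviour along $\widetilde T$. On the companion side, the fibring of $X(C)$ together with $c(h)\geq 0$ is used in the Roberts construction that underlies Theorem \ref{thm:conjecture fibre knots} to produce, for each $n\geq 1$, a co-oriented taut foliation $\mathcal F_C$ on $X_n(C)$ transverse to $\widetilde T$ with a controlled boundary slope (a suspension foliation whose boundary slope is determined by $c(h)$ and $n$). On the pattern side, the analogous braid foliation machinery that underlies Theorem \ref{thm:conjecture cyclic braids}/Theorem \ref{thm:taut foliation in cyclic covers of a braid}, applied to the pseudo-Anosov braid $b$ with $c(b)\geq 0$, produces a co-oriented taut foliation $\mathcal F_P$ on $\Sigma_n^V(P)$, again transverse to $\widetilde T$ and with a controlled boundary slope. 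The crucial point is that when both $c(h)$ and $c(b)$ are nonnegative, the boundary slopes of $\mathcal F_C$ and $\mathcal F_P$ on $\widetilde T$ can be made to lie in the intervals of realisable slopes predicted by each construction so that they coincide; this is where the sign hypothesis is used, since with opposite signs the two intervals of realisable boundary slopes would generically be disjoint for small $n$. Gluing along $\widetilde T$ yields a co-oriented taut foliation $\mathcal F$ on $\Sigma_n(L)$.

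To pass from taut foliation to left-orderability I would apply Thurston's universal circle construction to $\mathcal F$, obtaining a representation $\rho_{\mathrm{univ}}\colon \pi_1(\Sigma_n(L))\to \mathrm{Homeo}_+(S^1)$, and then invoke Proposition \ref{prop:Euler class} to identify $e(\rho_{\mathrm{univ}})$ with the Euler class of the tangent plane field $T\mathcal F$. Since $\mathcal F_C$ is built from the pullback of a fibration and $\mathcal F_P$ is built from a braid train-track construction, the tangent bundles of each piece have trivial Euler class (in the relevant relative sense, after trivialising along $\widetilde T$ where both foliations are transverse). A Mayer-Vietoris argument relative to $\widetilde T$ then shows $e(T\mathcal F)=0$ in $H^2(\Sigma_n(L))$, so $\rho_{\mathrm{univ}}$ lifts to $\widetilde{\mathrm{Homeo}}_+(S^1)\leq \mathrm{Homeo}_+(\mathbb R)$; provided the lift is nontrivial (which follows from the nontriviality of $\rho_{\mathrm{univ}}$, guaranteed because $\mathcal F$ is not a fibration over $S^1$ thanks to hyperbolicity of the companion and pseudo-Anosov-ness of the pattern), \cite[Theorem 1.1]{BRW} gives left-orderability, and together with the foliation $\mathcal F$ this shows $\Sigma_n(L)$ is excellent.

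The main obstacle is the boundary-slope matching in the gluing step when one of $c(h),c(b)$ is very small or zero: the Roberts-type foliations on $X_n(C)$ only realise a narrow interval of boundary slopes centred essentially at $-nc(h)\mu_n$, and the braided pattern construction on $\Sigma_n^V(P)$ realises a narrow interval centred at roughly $nc(b)$-times a natural class on $\widetilde T$. Showing that these two intervals overlap precisely when $c(h), c(b)\geq 0$ (for every $n\geq 2$ with $\gcd(m,n)=1$, including the range $n<2/|c(h)|$ excluded from Theorem \ref{thm:satellite c(h)>0 n>>0}) requires a careful slope computation in the basis $\{\mu_n,\lambda_n\}$ induced by the branching, and is where the nonnegativity hypothesis does its real work.
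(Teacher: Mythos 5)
Your decomposition and your plan for the foliation are essentially the paper's: $\Sigma_n(L)=N_n(P)\cup X_n(C)$ (using $\gcd(m,n)=1$), with a co-oriented taut foliation on each piece meeting the splitting torus in a linear foliation of matching slope. The paper, however, does not negotiate ``intervals of realisable slopes'': it fills each side along one specific slope ($\mu_n-\lambda_n$ on $X_n(C)$, $\mu-\lambda$ on $N_n(P)$, which correspond under the gluing map), observes that each filling is an open book whose monodromy is the old one composed with a boundary Dehn twist, computes the fractional Dehn twist coefficients ($nc(h)+1\geq 1$, and $c(b)/n+1\geq 1$ via Lemma \ref{lem:FDTC of lifting monodromy}), and applies Theorem \ref{thm:cgeq1} to get a taut foliation transverse to the binding; deleting the binding gives the foliation on each unfilled piece with the prescribed boundary slope. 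That is exactly where the nonnegativity hypothesis is used, so your intuition about its role is right, but the mechanism is a single fixed slope rather than an overlap of intervals.

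The genuine gap is in your left-orderability step. You propose to run the universal circle construction on the glued foliation $\mathcal F$ and to deduce $e(T\mathcal F)=0$ from triviality on the two pieces by ``Mayer--Vietoris relative to $\widetilde T$.'' That inference fails as stated: the kernel of the restriction map $H^2(\Sigma_n(L))\to H^2(X_n(C))\oplus H^2(N_n(P))$ is the image of $H^1(\widetilde T)$, which can surject onto nontrivial torsion in $H^2(\Sigma_n(L))$, so vanishing on each piece does not give vanishing globally; one would have to compare the two boundary trivialisations (a relative Euler class/framing computation along $\widetilde T$), which you have not done. Moreover, even on the pattern piece triviality is not automatic: $N_n(P)(\mu-\lambda)$ need not be an integer homology sphere, and the paper has to invoke Lemma \ref{lem:Euler class vanishes links} (the contact structure supported by $(\bar S_n,T_\partial\circ\bar\psi)$ is the lift of one on $S^3$, namely that supported by $\Delta_m^{2n}b$) to kill the Euler class there. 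The paper avoids your global Euler class problem entirely: it proves that $\pi_1\bigl(X_n(C)(\mu_n-\lambda_n)\bigr)$ and $\pi_1\bigl(N_n(P)(\mu-\lambda)\bigr)$ are left-orderable (the first by Theorem \ref{thm:conjecture fibre knots}(1), the second by Lemma \ref{lem:Euler class vanishes links} together with Theorem \ref{prop:taut foliation left orderability}), and then deduces left-orderability of $\pi_1(\Sigma_n(L))$ from the amalgamation criterion of Clay--Lidman--Watson \cite[Theorem 2.7]{CLW}. To repair your argument you would either need to carry out the relative Euler class matching along $\widetilde T$ honestly, or replace that step by the filling-plus-gluing argument the paper uses.
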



Here is the plan of the paper. In \S \ref{sec: background} we introduce background material and notational conventions. Section \ref{sec:braids MCG} covers some basic concepts regarding mapping class groups and braids. Section \ref{sec:fractional Dehn twist coefficient} introduces fractional Dehn twist coefficients from two perspectives: isotopies (\S \ref{subsec:FDTC isotopy}) and translation numbers (\S \ref{subsec:FDTC  translation number}). The Euler classes of representations with values in $\hbox{Homeo}_+(S^1)$ and of oriented circle bundles are defined and related in \S  \ref{sec: euler class}. Section \ref{sec:taut foliations circle} is devoted to a description of the universal circle and the universal circle representation associated to a rational homology $3$-sphere $M$ endowed with a co-oriented taut foliation. In \S \ref{sec:euler class universal circle} we give a detailed proof of the fact, due to Thurston, that the Euler class of the universal circle representation coincides with that of the associated foliation's tangent bundle (Proposition \ref{prop:Euler class}), and \S \ref{sec: lo and taut foliations} uses this to deduce the left-orderabilty of $\pi_1(M)$ when this Euler class vanishes. The material of the previous sections is combined  in \S \ref{sec: fdtc and lo} to study the left-orderability of $3$-manifolds given by open books. In particular, Theorem \ref{thm:conjecture fibre knots}, Corollary \ref{cor:universal abelian cover} and Corollary \ref{cor: knots rational homology sphere} are proved here. In \S \ref{sec:cyclic closed braids} we prove Theorems \ref{thm:conjecture cyclic braids} and  \ref{thm:taut foliation in cyclic covers of a braid}, which are used in \S \ref{sec:L-space conjecture genus one open books} to deduce Theorem \ref{thm:lspace genus one open book decomposition} and Corollary \ref{cor: branched cover genus 1}. Finally in \S \ref{sec:LO cyclic cover satellite knots}, we apply the results of \S \ref{sec: fdtc and lo} and \S \ref{sec:cyclic closed braids} to study cyclic branched covers of satellite knots in order to prove Theorems \ref{thm:satellite c(h)>0 n>>0} and \ref{thm: satellite knot c(b) and c(h) nonnegative}.

{\bf Acknowledgement}.
The authors would like to thank Jonathan Bowden for pointing out the possibility of avoiding the use of \cite[Theorem 1.4]{HKP} in our arguments and for discussions which led us to add Lemma \ref{lem:Euler class vanishes links} in its place. They also thank Bill Menasco for an enlightening correspondence concerning relations between fractional Dehn twist coefficients and open book foliations. Finally they thank the anonymous referee for suggestions which led to an improved exposition. 

\section{Some background results, terminology and notation} 
\label{sec: background}
 
We set some conventions in this section which will be used throughout the paper.  

\subsection{Link exteriors in rational homology spheres}
\label{subsec:knot exterior} 

Let $M$ be an oriented rational homology $3$-sphere and $L$ be an oriented null-homologous link in $M$. We use $N(L)$ to denote a closed tubular neighbourhood of $L$ and $X(L) = \overline{M \setminus N(L)}$ to denote the exterior of $L$ in $M$. 

If $L = \bigsqcup_i K_i$ is the decomposition of $L$ into its component knots, then $N(L) = \bigsqcup_i N(K_i)$ where $N(K_i)$ is a tubular neighbourhood of $K_i$. 

A {\it meridional disk} of $K_i$ is any essential properly embedded disk in $N(K_i)$ which is oriented so that its intersection with the oriented knot $K_i$ is positive.

The {\it meridional slope} of $K_i$ is represented by a primitive class $\mu_i \in H_1(\partial N(K_i))$ corresponding to the oriented boundary of a meridional disk of $K_i$. 

A {\it meridional class of $K_i$} in $H_1(X(L))$ is the image of $\mu_i$ under the inclusion-induced homomorphism $H_1(\partial N(K_i)) \to H_1(X(L))$. 

The assumption that $L$ is null-homologous implies that there is a compact, connected, oriented surface $S$ properly embedded in $X(L)$ which intersects each component $\partial N(K_i)$ of $\partial X(L)$ in an oriented simple closed curve $\lambda_i$ isotopic in $N(K_i)$ to $K_i$. It is clear from the construction that 
$$\mu_i \cdot \lambda_i = 1$$
for each $i$.

In the case that $L$ is a knot, $\lambda_1$ represents the longitudinal class of $L = K_1$ in $H_1(\partial X(K_1))$.

\begin{lemma} 
\label{lemma: homology of exterior}
Suppose that $K$ is a null-homologous knot in a rational homology $3$-sphere $M$ with exterior $X(K)$. Then $H_1(X(K)) \cong H_1(M) \oplus \mathbb Z$ where the second factor is generated by a meridional class of $K$. Further, the inclusion-induced homomorphism $H^2(M) \to H^2(X(K))$ is an isomorphism. 
\end{lemma}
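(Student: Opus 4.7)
The plan is to prove both assertions using Mayer--Vietoris for the decomposition $M = X(K) \cup N(K)$ along the torus $T = \partial X(K)$. Two background facts drive the argument. First, $H_2(M) = 0$: since $M$ is a rational homology $3$-sphere, $H_1(M)$ is finite, and for a closed orientable $3$-manifold $H_2 \cong H^1 \cong \operatorname{Hom}(H_1, \mathbb Z) = 0$. Second, $N(K) \simeq S^1$, with $H_1(N(K))$ generated by $[K]$ and $H^2(N(K)) = 0$.

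For the first assertion, the relevant segment of the homological Mayer--Vietoris sequence reduces to
$$0 \to H_1(T) \to H_1(X(K)) \oplus H_1(N(K)) \to H_1(M) \to 0.$$
I will evaluate the first map on the basis $\{\mu,\lambda\}$: the meridian bounds a disk in $N(K)$ and sends $\mu \mapsto (\mu_{X(K)}, 0)$; the longitude $\lambda$ is null-homologous in $X(K)$---this is the sole place the null-homologous hypothesis enters, via the Seifert surface---and represents $[K]$ in $N(K)$, giving $\lambda \mapsto (0, [K])$ (up to sign convention). Modding out recovers the extension $0 \to \mathbb Z\langle \mu_{X(K)}\rangle \to H_1(X(K)) \to H_1(M) \to 0$. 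A Seifert surface $S$ for $K$ defines a class in $H^1(X(K))$ pairing with $\mu_{X(K)}$ to $\pm 1$, producing a retraction $H_1(X(K)) \to \mathbb Z$ that splits the extension and yields $H_1(X(K)) \cong H_1(M) \oplus \mathbb Z\langle \mu_{X(K)}\rangle$.

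For the second assertion, I will apply the cohomological Mayer--Vietoris to the same decomposition. Part one together with the universal coefficient theorem shows $H^1(X(K)) = \mathbb Z\langle \mu_{X(K)}^*\rangle$ and $H^1(N(K)) = \mathbb Z\langle [K]^*\rangle$, whose restrictions to $H^1(T) = \mathbb Z\langle \mu^*\rangle \oplus \mathbb Z\langle \lambda^*\rangle$ are $\mu^*$ and $\lambda^*$ respectively. Hence the Mayer--Vietoris difference map is surjective, the connecting map $H^1(T) \to H^2(M)$ vanishes, and the restriction $H^2(M) \to H^2(X(K)) \oplus H^2(N(K)) = H^2(X(K))$ is injective. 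For surjectivity, part one combined with the universal coefficient theorem (or equivalently Poincar\'e--Lefschetz duality) yields $H^2(X(K)) \cong H_1(M)$, a finite group; since $H^2(T) = \mathbb Z$ is torsion-free, the restriction $H^2(X(K)) \to H^2(T)$ must vanish, and Mayer--Vietoris exactness forces $H^2(M) \to H^2(X(K))$ to be surjective as well. The main technical point throughout is correctly computing the images of the basis elements of $H_1(T)$ and $H^1(T)$ under the Mayer--Vietoris maps; once these identifications are in place, both conclusions follow mechanically.
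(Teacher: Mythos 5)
Your proof is correct, but it follows a genuinely different route from the paper's. The paper works with the long exact sequence of the pair $(M, X(K))$: excision identifies $H_*(M,X(K))$ with $H_*(N(K),\partial N(K))$, the class $\eta$ of a meridional disk generates $H_2(M,X(K))\cong\mathbb Z$ with $\partial\eta = \mu$, intersection with the Seifert surface splits the resulting extension, and for the cohomological statement one checks that the connecting map $H^1(X(K))\to H^2(M,X(K))\cong\mathbb Z$ is onto (evaluate it on the dual-meridian class), after which exactness of the pair sequence gives the isomorphism $H^2(M)\to H^2(X(K))$. You instead run Mayer--Vietoris along $T=\partial X(K)$, in homology for the first assertion and in cohomology for the second; the Seifert surface enters in exactly the same two ways (forcing $\lambda$ to die in $H_1(X(K))$, and furnishing the retraction $\alpha\mapsto\alpha\cdot[S]$ that splits the extension), so the substance is the same, repackaged, and your version has the virtue of making the roles of $\mu$ and $\lambda$ on the torus completely explicit. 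The one thin spot is your assertion that $H^2(X(K))\cong H_1(M)$ is finite: this does not follow from part one plus the universal coefficient theorem alone, because the $\operatorname{Hom}$-term in UCT requires knowing that $H_2(X(K))$ has rank zero. That fact is true and standard --- for instance Poincar\'e--Lefschetz duality gives $H^2(X(K))\cong H_1(X(K),\partial X(K))$, which the exact sequence of the pair exhibits as a quotient of $H_1(M)$, or one can argue $b_2(X(K))=0$ from $\chi(X(K))=\tfrac12\chi(\partial X(K))=0$ together with $b_0=b_1=1$, $b_3=0$ --- but it needs a sentence; all you actually use is that $H^2(X(K))$ is torsion, so the restriction to $H^2(T)\cong\mathbb Z$ vanishes. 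The paper sidesteps this point by staying with the relative groups $H^*(M,X(K))\cong\mathbb Z$, where the corresponding step is the observation that $H^3(M,X(K))\to H^3(M)$ is injective. With that one sentence supplied, your argument is complete.
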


\begin{proof}
Excision implies that 
$$H_r(M, X(K)) \cong H_r(N(K), \partial N(K)) \cong \left\{ \begin{array}{ll} \mathbb Z & \hbox{ if } r = 2,3 \\ 0 & \hbox{ otherwise} \end{array} \right.$$ 
where $H_2(M, X(K)) \cong \mathbb Z$ is generated by the class $\eta$ carried by a meridional disk of $N(K)$.  Then the exact sequence of the pair $(M, X(K))$ yields a short exact sequence 
\begin{equation} 
\label{eqn: sequence}
0 \to H_2(M, X(K)) \xrightarrow{\partial} H_1(X(K)) \to H_1(M) \to 0 
\end{equation} 
where $\partial(\eta)$ is a meridional class $\mu$ of $K$. Since $K$ is null-homologous in $M$, there is a properly embedded, compact, connected, oriented surface $S$ in $X(K)$ whose boundary represents the longitudinal class $\lambda$ of $K$ in $H_1(\partial X(K))$. Then $\partial([S]) \cdot \mu = \lambda \cdot \mu = \pm 1$, where $[S] \in H_2(X(K), \partial X(K))$ corresponds to the fundamental class of $S$. Hence the homomorphism $H_1(X(K)) \to H_2(M, X(K)), \alpha \mapsto (\alpha \cdot [S]) \eta$, splits the sequence (\ref{eqn: sequence}) up to sign, which proves the first assertion of the lemma. 

For the second, consider the connecting map $H^1(X(K)) \xrightarrow{\delta} H^2(M, X(K))$ from the cohomology exact sequence of the pair $(M, X(K))$. Excision shows that 
$$H^2(M, X(K)) \cong H^2(N(K), \partial N(K)) \cong \hbox{Hom}(H_2(N(K), \partial N(K)), \mathbb Z) \cong \mathbb Z$$ 
is generated by the homomorphism which takes the value $1$ on the class $\eta \in H_2(M, X(K)) = H_2(N(K), \partial N(K))$. On the other hand, since $H_1(X(K)) \cong H_1(M) \oplus \mathbb Z$ where the $\mathbb Z$ factor is generated by $\partial \eta$, if $\nu \in \hbox{Hom}(H_1(X(K)), \mathbb Z) \cong H^1(X(K))$ is the homorphism which takes the value $1$ on a meridian of $K$ and $0$ on $H_1(M)$, then $\delta(\nu)$ is a generator of $H^2(N(K), \partial N(K))$. Thus $\delta$ is surjective. It then follows from the exact cohomology sequence of the pair $(M, X(K))$ that the homomorphism $H^2(M) \to H^2(X(K))$ is an isomorphism, which completes the proof. 
\end{proof}

\subsection{Cyclic branched covers of null-homologous links}
\label{sec:cyclic branched cover}
Given a null-homologous oriented link $L= \bigsqcup_i K_i$ in an oriented rational homology sphere $M$ and compact, connected, oriented surface $S$ properly embedded in $X(L)$ as above, let $[S] \in H_2(X(L), \partial X(L))$ correspond to the fundamental class of $S$. For each $n \geq 1$, the epimorphism 
$$H_1(X(L))  \xrightarrow{\;\; \alpha \mapsto \alpha \cdot [S] \;\;}  \mathbb{Z} \xrightarrow{\tiny \;\; \hbox{(mod $n$) reduction} \;\;}  \mathbb Z/n$$ 
determines an $n$-fold cyclic cover 
$$X_n(L) \to X(L)$$
and an $n$-fold cyclic cover 
$$(\Sigma_n(L), \widetilde L) \xrightarrow{\;\; p \;\;} (M, L)$$ 
branched over $L$. 

The link $\widetilde L$ decomposes into components $\widetilde L = \bigsqcup_i \widetilde K_i$ 
where $\widetilde K_i=p^{-1}(K_i)$. Similarly, its closed tubular neighbourhood $N(\widetilde{L}) = \overline{\Sigma_n(L) \setminus X_n(L)}$ splits into components $N(\widetilde{L}) = \bigsqcup_i N(\widetilde K_i)$ where $N(\widetilde K_i)$ is a tubular neighbourhood of $\widetilde K_i$. 

For each $i$ there is a basis $\{\widetilde \mu_{i}, \widetilde \lambda_{i}\}$ of $H_1(\partial N(\widetilde K_i))$ determined by the property that 
$$\widetilde \mu_i \xrightarrow{\;\;\; p_* \;\;} n \mu_i $$ 
and
$$\widetilde \lambda_i \xrightarrow{\;\;\; p_* \;\;}  \lambda_i$$ 
The surface $S$ lifts to a properly embedded surface $\widetilde S \subset X_n(L)$ which intersects $\partial \widetilde N(K_i)$  in an oriented simple closed curve representing $\widetilde \lambda_i$.

By construction, $\Sigma_n(L)$ is the $(\widetilde \mu_1, \widetilde  \mu_2, \ldots, \widetilde  \mu_n)$-Dehn filling of $X_n(L)$.

\subsection{Lifting contact structures to branched covers}
\label{subsec: contact}

 Let $M$ be an oriented rational homology $3$-sphere and $L$ an oriented null-homologous link in $M$. Fix a compact, connected, oriented surface $S$ properly embedded in $X(L)$ and let $p: (\Sigma_n(L), X_n(L), \widetilde L) \xrightarrow{\;\; p \;\;} (M, X(L), L)$ be as above, where $n \geq 1$. 

Let $\xi=\ker(\alpha)$ be positive contact structure on $M$ determined by a smooth, nowhere zero $1$-form $\alpha$ and suppose that $L$ is a positively transverse to $\xi$. There is a lift of $\xi$ to $\Sigma_n(L)$, denoted by $\widetilde{\xi}$, which is the kernel of the pull-back form $p^*(\alpha)$ on $X_n(L)$ and is positively transverse to $\widetilde{L}$ (cf. \cite[\S 2.5]{HKP}, \cite[Theorem 7.5.4]{Gei}). More precisely, $\xi$ can be constructed as follows. 

Recall that $L=\bigsqcup_i K_i$ and $\widetilde L=\bigsqcup_i \widetilde K_i$ where $\widetilde K_i=p^{-1}(K_i)$. For each $i$, there is a suitable tubular neighborhoods $N(K_i)$ and $N(\widetilde{K}_i)$, and cylindrical coordinates $(r,\theta,z)$ and $(\tilde{r},\tilde{\theta},\tilde{z})$ over the tubular neighborhoods $N(K_i)$ and $N(\widetilde{K}_i)$ respectively such that the contact form $\alpha$ restricted to $N(K_i)$ is in the standard form $\alpha|_{N(K_i)}=dz+r^2d\theta$ (\cite[Example 2.5.16]{Gei})  and the cyclic branched cover $p$ restricts to $N(\widetilde{K}_i)\setminus \widetilde{K}_i$ sends $(\tilde{r},\tilde{\theta},\tilde{z})$ to $(r,n\theta, z)$. The pull-back $p^*(\alpha|_{{N(K_i)}\setminus K_i})=d\tilde{z}+n\tilde{r}^2d\tilde{\theta}$ is a contact form over $N(\widetilde{K}_i) \setminus \widetilde{K}_i$ which extends smoothly to $N(\widetilde{K}_i)$ by letting $\tilde\alpha|_{\tilde K_i} = d\tilde z$. Extending $p^*(\alpha|_{\Sigma_n(L)\setminus \widetilde{L}})$ in this way to the entire tubular neighborhood, we produce the desired contact form on $\Sigma_n(L)$, denoted by $\widetilde \alpha$. Let  $\widetilde{\xi}=\ker(\widetilde \alpha)$.

\subsection{Fibred knots and open books} 
In this section, we review the definitions of fibred knots and open books. See \cite[\S 10H and \S 10K]{Rol} for the details. 

An oriented knot $K$ in $M$ is called {\it fibred} with {\it fibre} $S$ if $S$ is a compact, connected, orientable surface properly embedded in $X(K)$ which has connected boundary and there is a locally-trivial fibre bundle $X(K) \to S^1$ with fibre $S$. Note that $S \cap \partial X(K)$ carries the longitudinal slope $\lambda$ of $K$. 
A {\it monodromy} of $K$ is an orientation-preserving homeomorphism $h:S \to S$ such that $h|_{\partial S}$ is the identity,  $X(K) \cong (S \times I)/((x,1) \sim (h(x), 0))$, and if $x \in \partial S$, then the loop on $\partial X(K)$ determined by $\{x\} \times I$ carries the meridional slope $\mu$ of $K$. If $K$ is a knot in a rational homology $3$-sphere, its monodromy is well-defined up to conjugation and an isotopy fixed on $\partial S$. See \cite[Proposition 5.10]{BZ} for a proof of this claim in the case that $K \subset S^3$. 

Conversely, given an orientation-preserving homeomorphism $h$ of a compact, connected, orientable surface $S$ with connected boundary which restricts to the identity on $\partial S$, there is a well-defined closed, connected, orientable $3$-manifold $M$ obtained from the Dehn filling of $(S \times I)/((x,1) \sim (h(x), 0))$ along the slope determined by the image of  $\{x\} \times I$ for $x \in \partial S$. The core of the filling solid torus is a knot $K$ in $M$ which is fibred with fibre $S$ and monodromy $h$. The meridian of $K$ is carried by the image of $\{x\} \times I$. The pair $(S, h)$ is called an {\it open book} decomposition of $M$ with {\it binding} $K$. 

\section{Mapping class groups and closed braids}
\label{sec:braids MCG}
Throughout this section $S$ will denote an $m$-punctured ($m \geq 0$) smooth orientable compact surface with nonempty boundary. All diffeomorphisms of $S$ will be assumed to be orientation-preserving. 

We use ${\rm Mod}(S)$ to denote the mapping class group of isotopy classes of diffeomorphisms of $S$ which restrict to the identity on $\partial S$. Isotopies are assumed to be fixed on $\partial S$. 
 
From time to time we will identify an element of ${\rm Mod}(S)$ with one of its representative diffeomorphisms, though only when discussing properties held by all such representatives. 

\subsection{The Nielsen-Thurston classification of mapping classes}  
\label{subsec:nielsen-thurson classification}
A homeomorphism $\varphi:S\rightarrow S$ is called pseudo-Anosov if it preserves a pair $(\mathcal{F}^s, \mu^s)$ and $(\mathcal{F}^u,\mu^u)$ of mutually transverse, measured, singular foliations on $S$, and there is a number $\lambda>1$ such that  $\varphi$ scales the transverse measure $\mu^s$ by $\lambda^{-1}$ and the transverse measure $\mu^u$ by $\lambda$. Here $(\mathcal{F}^s, \mu^s)$  and $(\mathcal{F}^u,\mu^u)$  are called 
 the {\it stable foliation} and {\it unstable foliation} of $\varphi$ respectively. We refer the reader to \cite[Chapter 13]{FM} for more precise details on pseudo-Anosov homeomorphisms as well as other results stated in this section.

By the Nielsen-Thurston classification \cite{Thu2}, each element $f$ in ${\rm Mod}(S)$ is freely isotopic to a map $\varphi:S\rightarrow S$ which is either 
\begin{itemize}
\vspace{-.2cm} 

\item a periodic diffeomorhpism, i.e. $\varphi^n=1$ for some $n>0$, or

\vspace{.2cm} \item a reducible diffeomorphism, i.e. there exists a nonempty collection $\mathcal C=\{c_1,\cdots c_r\}$ of pairwise disjoint essential  simple closed curves in $S$ such that $f(\mathcal{C})= \mathcal{C}$, or  

\vspace{.2cm} \item a pseudo-Anosov homeomorphism.

\end{itemize}
The homeomorphism $\varphi$ is called a {\it Nielsen-Thurston representative} of $f$; $f$ is called periodic, reducible or pseudo-Anosov if its Nielsen-Thurston representative has the corresponding property.

It is known that a pseudo-Anosov mapping class is neither periodic nor reducible \cite{Thu2}. A fundamental result of Thurston is that the interior of the mapping torus of $f \in \rm{Mod}(S)$ is a finite volume hyperbolic manifold if and only if $f$ is pseudo-Anosov \cite{Thu1}. It contains an essential torus if $f$ is reducible and it is a Seifert fibred manifold if $f$ is periodic. 

\subsection{The braid group $B_m$ and ${\rm Mod}(D_m)$}
\label{subsec:braid group mcg} 
We use $B_m$ to denote the group of isotopy classes of smooth $m$-strand braids, where each strand of a braid is oriented upward (Figure \ref{fig:fig34}(A)). Let $\sigma_i$ be the standard $i^{th}$ Artin generator of the braid group $B_m$, $i=1,\cdots, n-1$ (Figure \ref{fig:fig34}(A)). 

\medskip 

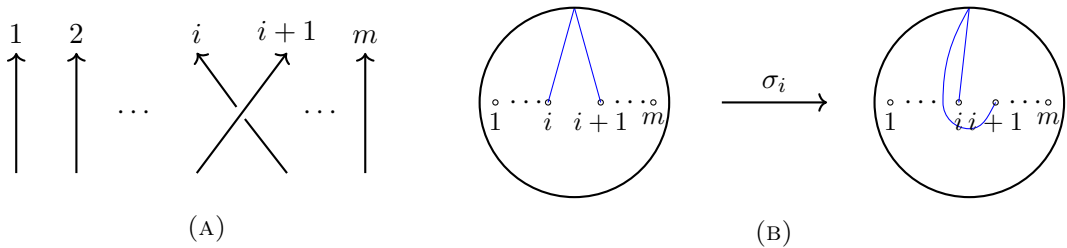
\begin{figure}[ht]
\centering
 \begin{subfigure}{.4\textwidth}
  \begin{tikzpicture}[scale=0.8]
  \draw [thick, ->] (0.5,1) -- (0.5,3);
  \node [above] at (0.5,3) {$1$}; 
  \draw [thick, ->] (1.5,1) -- (1.5,3);
  \node [above] at (1.5,3) {$2$}; 
  \node [right] at (2,2) {$\cdots$}; 
  \draw [thick, ->] (3.5,1) -- (5,3);
  \draw [thick] (5,1) -- (4.3,1.9);
  \draw [thick,->] (4.16,2.1) -- (3.5,3);
  \node [right] at (5.1,2) {$\cdots$};
  \draw [thick,->] (6.3, 1) -- (6.3,3);
  \node [above] at (6.3,3) {$m$};
  \node [above] at (5,3) {$i+1$}; 
  \node [above] at (3.5,3) {$i$};
  \node at (0,1) {\large \color{white}{d}};
  \end{tikzpicture}
  \caption{}
\end{subfigure}%
\begin{subfigure}{.55\textwidth}
  \centering
  \begin{tikzpicture}[scale=0.7]
  \node [below] at (1,3) {\small $1$};
  \draw (1,3) circle (0.05);
  \node [right] at (1.07,3) {$\cdots$};
  \draw (2,3) circle (0.05);
  \node [below] at (2,3) {\small $i$};
  \draw (3,3) circle (0.05);
  \node [below] at (3,3) {\small $i+1$};
  \node [right] at (3.05,3) {$\cdots$};
  \draw (4,3) circle (0.05);
  \node [below] at (4,3) {\small $m$};
  \draw [blue] (2.5, 4.8) -- (2,3);
  \draw [blue] (2.5,4.8) -- (3,3);
  \draw [thick] (2.5,3) circle (1.8);
  \draw [thick, ->] (5.3,3) -- (7.3,3);  
  \node [above] at (6.3,3) {$\sigma_i$}; 
 \node [below] at (8.5,3) {\small $1$};
  \draw (8.5,3) circle (0.05);
  \node [right] at (8.57,3) {$\cdots$};
  \draw (9.8,3) circle (0.05);
  \draw [blue] (10,4.8)--(9.8,3);
  \draw [blue] (10,4.8) to  [in=90,out=240] (9.5,3) to [in=180, out=270] (10,2.5) to [out=0,in=240] (10.5,3);
  \draw (10.5,3) circle (0.05);
  \node [right] at (10.55,3) {$\cdots$};
   \draw (11.5,3) circle (0.05);
   \node [below] at (11.5,3) {\small $m$};
   \node [below] at (9.8,3) {\small $i$};
  \node [below] at (10.5,3) {\small $i+1$};
  \draw [thick] (10,3) circle (1.8);
 \end{tikzpicture}
  \caption{}
\end{subfigure}
\caption{(A) $\sigma_i$ in $B_m$; (B) $\sigma_i$ in ${\rm Mod}(D_m)$}
\label{fig:fig34}
\end{figure}

The braid group $B_m$ is isomorphic to the mapping class group ${\rm Mod}(D_m)$, where $D_m$ denotes the $m$-punctured disk obtained by removing $m$ points on the real line from the interior of the unit disk $D^2$. See \cite[Chapter 9]{FM} for instance.  We identify these two groups through the following correspondence.  Given an $m$-strand braid the corresponding diffeomorphism of  $D_m$ is obtained by sliding the $m$-punctured disk along the braid from the bottom to the top (see Figure \ref{fig:fig34}).  
The product of two braids $b_1$ and $b_2$ is the braid obtained by placing $b_1$ on the top of $b_2$.  When $b_1$ and $b_2$ are viewed as diffeomorphisms of the punctured disk $D_m$, we have $b_1b_2(x)=b_1(b_2(x))$ for all $x\in D_m$.

A braid $b\in {\rm Mod}(D_m)$ is called {\it pseudo-Anosov}, respectively {\it periodic}, respectively {\it reducible}, if it is freely isotopic to a homeomorphism of $D_m$ with the corresponding property.   

\subsection{Hyperbolic links as  closures of pseudo-Anosov braids} 
\label{subsec:hyperbolic pseudo-Anosov}
The {\it closure} of a braid $b$, denoted $\widehat{b}$, is an oriented link in $S^3$ obtained by closing the braid $b$ as illustrated in Figure \ref{fig:branched_cover_braid_fig0}.  A classical theorem of Alexander \cite{Al} asserts that for any oriented link $L$ in $S^3$, there is an $m \geq 1$ and a braid $b \in B_m$ such that $L$ is isotopic to $\widehat b$. 

\begin{figure}[ht]
\centering
\begin{tikzpicture}[scale=0.85]
 \draw [gray, thick] (4, 4.5) ellipse (0.75 and 0.3);
 \draw [gray,thick] (4, 1) ellipse (0.75 and 0.3);
  \draw [gray, thick,->] (4,0.7) -- (4.05,0.7);
 \draw [gray, thick] (3.25,1) -- (3.25, 4.5);
 \draw [gray,thick] (4.75,1) -- (4.75, 4.5);
  \draw [gray,thick, ->] (4.75,2.75) -- (4.75,2.8);
 \draw [thick] (3.5,2) rectangle (4.5,3.5);
 \node at (4,2.75) {$b$}; 
 \begin{scope}[thick,decoration={
    markings,
    mark=at position 0.5 with {\arrow{>}}}
    ] 
    \draw[postaction={decorate}] (3.6,3.5)--(3.6,4.5);
    \draw[postaction={decorate}] (3.85,3.5)--(3.85,4.5);
    \draw[postaction={decorate}] (4.4,3.5)--(4.4,4.5);
     \draw[postaction={decorate}] (3.6,1)--(3.6,2);
    \draw[postaction={decorate}] (3.85,1)--(3.85,2);
    \draw[postaction={decorate}] (4.4,1)--(4.4,2);
\end{scope}
\filldraw (4,3.8) circle (0.015);
\filldraw (4.1,3.8) circle (0.015);
\filldraw (4.2,3.8) circle (0.015);
\filldraw (4,1.65) circle (0.015);
\filldraw (4.1,1.65) circle (0.015);
\filldraw (4.2,1.65) circle (0.015);
\filldraw (3.6,1) circle (0.035);
\filldraw (3.85,1) circle (0.035);
\filldraw (4.4,1) circle (0.035);
\filldraw (3.6,4.5) circle (0.035);
\filldraw (3.85,4.5) circle (0.035);
\filldraw (4.4,4.5) circle (0.035);
\draw [->] (5.5, 2.8) -- (7.5,2.8);
\draw [gray, dashed] (8.25, 2.65) to [out=90, in=180] (9,3) to [out=0, in=90] (9.65, 2.65);
\draw [thick] (8.5,2) rectangle (9.5,3.5);
  \begin{scope}[thick,decoration={
    markings,
    mark=at position 0.1 with {\arrow{>}}}
    ] 
    \draw[postaction={decorate}] (9.4,3.5) to [out=90, in=270] (9.4,3.7) to [out=90, in=180]  (9.8,4) to [out=0, in=90] (10.2,2.75) to [out=270, in=0] (9.8, 1.5) to [out=180, in=270] (9.4,1.8) to [out=90,in=270] (9.4,2);
    \draw[postaction={decorate}] (8.85,3.5) to [out=90, in=270] (8.85, 3.7) to [out=90, in=180] (9.8, 4.5) to [out=0,in=90] (10.55, 2.75) to [out=270, in=0] (9.8, 1.1) to [out=180, in=270] (8.85,1.8) to [out=90,in=270](8.85,2);
    \draw[postaction={decorate}] (8.6,3.5) to [out=90,in=270] (8.6,3.65) to [out=90,in=180] (9.8, 4.75) to [out=0,in=90] (10.75,2.75) to [out=270,in=0] (9.8,0.9) to [out=180, in=270] (8.6, 1.85) to [out=90,in=270] (8.6, 2);
\end{scope}
 \begin{scope}[decoration={
    markings,
    mark=at position 0 with {\arrow{>}}}
    ] 
 \draw [blue, postaction={decorate}] (9.65,2.9) -- (9.65, 2.93);
\end{scope}
\draw [gray,thick] (8.25, 2.8) to [out=90, in=270] (8.25, 3.5) to [out=90, in=180] (9.7, 5) to [out=0, in=90] (11.1,2.75) to [out=-90, in=0] (9.7, 0.5) to [out=180, in=270] (8.25, 1.7) to [out=90, in=-90] (8.25,2.8);
\node at (9,2.75) {$b$}; 
\begin{scope}[decoration={
    markings,
    mark=at position 0.5 with {\arrow{>}}}
    ] 
\draw [gray,thick, postaction={decorate}] (8.25, 2.65) to [out=270, in=180] (9,2.3) to [out=0, in=270] (9.65, 2.65);
\end{scope}
\draw [blue, thick] (9.8, 2.75) ellipse (0.15 and 0.7);
\node at (9.8, 3.65) {{\color{blue}\small $\nu$}};
\filldraw (9,3.8) circle (0.015);
\filldraw (9.1,3.8) circle (0.015);
\filldraw (9.2,3.8) circle (0.015);
\filldraw (9,1.65) circle (0.015);
\filldraw (9.1,1.65) circle (0.015);
\filldraw (9.2,1.65) circle (0.015);
\end{tikzpicture}

\caption{}
\label{fig:branched_cover_braid_fig0}
\end{figure}

Recall that a link $L$ in $S^3$ is {\it hyperbolic} if its exterior $S^3\setminus L$ is hyperbolic, i.e., it admits a complete finite volume Riemannian metric of constant curvature $-1$. Ito has shown \cite[Theorem 1.3]{Ito2} that if the absolute value of the Dehornoy floor (Definition \ref{def:Dehornoy floor}) of $b \in B_m$ is at least $2$ and $\widehat b$ is a knot, then $\widehat b$ is hyperbolic if and only if $b$ is pseudo-Anosov. More recently, Ito and Kawamuro have shown \cite[Theorem 8.4]{IK} that if the absolute value of the fractional Dehn twist coefficient of $b$ is larger than $1$, then $\hat b$ is a hyperbolic link if and only if $b$ is a pseudo-Anosov braid.

\section{Fractional Dehn twist coefficients}
\label{sec:fractional Dehn twist coefficient}

In this section we suppose that $S$ is an oriented hyperbolic surface with nonempty geodesic boundary. Given $h: S \rightarrow S$ representing an element of ${\rm Mod}(S)$, let $H_t: S\rightarrow S$ denote a free isotopy between $H_0 = h$ and its  Nielsen-Thurston representative $H_1 = \varphi$. 

We are interested in the fractional Dehn twist coefficient of $h$ with respect to a boundary component of $S$. Intuitively, this is a rational number representing the amount of twisting $\partial S$ undergoes during the isotopy $H_t$ from $h$ to $\varphi$. The concept was introduced in \cite{HKM1} to study the tightness of the contact structure supported by an open book $(S, h)$. When $h$ is pseudo-Anosov, it is closely related to the degeneracy slope of a pseudo-Anosov homeomorphism \cite{GO}. If $\partial S$ is connected, it can be used to formulate a convenient criterion, due to Honda, Kazez and Matic, for the existence of co-oriented taut foliations on the open book $(S,h)$  (cf. Theorem \ref{thm:cgeq1}). 

We give two equivalent definitions of the fractional Dehn twist coefficient in Section \ref{subsec:FDTC isotopy} and Section \ref{subsec:FDTC translation number} below, and will take advantage of both points of view. For simplicity, we also assume that $\partial S$ is connected and leave the simple task of extending the definition to the case that $\partial S$ is not connected to the reader (also see \cite{HKM1}).

The following theorem summarizes results from \cite{HKM2}. See Theorem 4.1, Theorem 4.3, and Lemma 4.4 of that paper for the details. (We remark that the results of \cite{Bn, KR2} are needed for the proof of \cite[Theorem 4.3]{HKM2}.) 

\begin{theorem}[\cite{HKM2}] \label{thm:cgeq1}
Assume that $(S,h)$ is an open book decomposition of a closed oriented $3$-manifold $M$, where $\partial S$ is connected and $h$ is freely isotopic to a pseudo-Anosov homeomorphism. If the fractional Dehn twist coefficient of $h$ satisfies $c(h)\geq 1$, then there exists a co-orientable taut foliation on $M$ which is transverse to the binding of $(S,h)$ and is homotopic to the contact structure supported by $(S,h)$. 
\end{theorem}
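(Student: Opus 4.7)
The plan is to reduce the theorem to Roberts' construction of taut foliations on the exterior of the binding, using the fractional Dehn twist coefficient to guarantee that the meridional slope lies in the range of slopes realized by Roberts' foliations, then to extend across the solid torus neighborhood of the binding and finally to compare with the contact structure via an Eliashberg--Thurston style perturbation.

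First I would replace $h$ with its Nielsen--Thurston representative $\varphi$, a pseudo-Anosov homeomorphism of $S$ preserving transverse measured singular foliations $(\mathcal{F}^s,\mu^s)$ and $(\mathcal{F}^u,\mu^u)$. The binding exterior is the mapping torus $M_\varphi = (S \times [0,1])/((x,1)\sim(\varphi(x),0))$. Suspending $\mathcal{F}^s$ yields a singular taut foliation on $M_\varphi$ whose leaves spiral into $\partial M_\varphi$ along a slope — the \emph{degeneracy slope} $\delta$ of $\varphi$ — dictated by the way $\varphi$ permutes the prongs of $\mathcal{F}^s$ at $\partial S$. Following Gabai--Oertel and Roberts, one desingularizes this suspension near $\partial M_\varphi$ using an essential lamination carried by a branched surface built from a train track that carries $\mathcal{F}^s$; the resulting family of taut, transversely oriented foliations $\{\mathcal{F}_r\}$ realizes every boundary slope $r$ in an open interval of slopes on $\partial M_\varphi$ that lies strictly on the ``long'' side of $\delta$ from the fiber slope $\lambda$. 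By iterated spinning around $\partial M_\varphi$ one can in fact realize every slope on the side of $\delta$ opposite to $\lambda$ at distance at least $\Delta(\lambda,\delta)$ from $\delta$.

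Second I would translate the hypothesis $c(h) \geq 1$ into the statement that the meridional slope $\mu$ of the binding lies in this admissible interval. Writing $c(h) = a/b$ in lowest terms with $b>0$, the degeneracy slope is represented by the primitive class $\delta = b\mu + a\lambda$, so $\Delta(\mu,\delta) = |a| = b \cdot c(h) \geq b = \Delta(\lambda,\delta)$. Thus the foliation $\mathcal{F}_\mu$ of $M_\varphi$ meeting $\partial M_\varphi$ transversely in a foliation by meridional circles exists. Because meridional circles bound disks in the solid torus glued on to recover $M$ from $M_\varphi$, one can extend $\mathcal{F}_\mu$ across this solid torus by Reeb-free monkey-saddle disk leaves to produce a co-orientable foliation $\mathcal{F}$ on $M$ that is transverse to the binding $K$. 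Tautness is preserved: every leaf in the extension meets the binding, which in turn meets a closed transversal to $\mathcal{F}_\mu$ constructed inside $M_\varphi$.

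Third, the homotopy with the Giroux contact structure $\xi_{(S,h)}$ follows from Eliashberg--Thurston. The foliation $\mathcal{F}$ has no torus leaves bounding Reeb components (it is taut), so $T\mathcal{F}$ can be $C^0$-approximated by a positive contact structure $\xi_\mathcal{F}$; since $\mathcal{F}$ is transverse to the binding and (up to a small isotopy of the extension) positively transverse to the pages of $(S,h)$ away from a neighborhood of $K$, the plane fields of $\xi_\mathcal{F}$ and $\xi_{(S,h)}$ are both positive, both positively transverse to $K$, and both homotopic rel $K$ to the tangent plane field of the pages; a standard convex interpolation through positive plane fields (possible because $M$ is three-dimensional and the obstruction to homotopy lies in $H^2(M;\mathbb{Z})$, which is killed by the common Euler class) yields the desired homotopy.

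The main obstacle I anticipate is the second step: carefully matching Roberts' open interval of attainable boundary slopes with the inequality $c(h) \geq 1$, which requires bookkeeping of how the free isotopy from $h$ to $\varphi$ twists $\partial S$ relative to the prong permutation of $\mathcal{F}^s$. The other delicate point is verifying that the extension across the binding can be chosen to be transverse to $K$ while remaining taut; this depends on the branched surface near $\partial M_\varphi$ being engineered so that its ``monkey-saddle'' extension has no Reeb annuli, a point that uses the pseudo-Anosov dynamics of $\varphi$ in an essential way.
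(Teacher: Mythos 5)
You should first note that the paper does not actually prove this statement: it is quoted from Honda--Kazez--Mati\'c \cite{HKM2} (Theorems 4.1, 4.3 and Lemma 4.4 there), with the approximation results of \cite{Bn,KR2} supplying the perturbation step, as the paper remarks. Your first two steps follow the same route as that source: Roberts' construction \cite{Rob} of taut foliations in the punctured-surface bundle realizing an interval of boundary slopes, the translation of $c(h)\geq 1$ into the statement that the meridian lies at distance at least $\Delta(\lambda,\delta)$ from the degeneracy slope (consistent with Remark \ref{rem: main theorem}(2)), and the capping off by meridian disks transverse to the binding. Modulo the slope bookkeeping that you yourself flag as delicate, this is the intended construction.

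The genuine gap is in your third step. You deduce the homotopy between $T\mathcal{F}$ and the supported contact structure from the claim that ``the obstruction to homotopy lies in $H^2(M;\mathbb{Z})$, which is killed by the common Euler class.'' That is false: oriented $2$-plane fields on a closed oriented $3$-manifold are sections of an $S^2$-bundle, so beyond the primary obstruction in $H^2(M;\pi_2(S^2))$ there is a secondary obstruction in $H^3(M;\pi_3(S^2))\cong \mathbb{Z}$; on $S^3$, for instance, every plane field has zero Euler class yet there are infinitely many homotopy classes. Moreover, in the use this paper makes of the theorem (Proposition \ref{prop: e=0 and c>1 implies lo}), equality of Euler classes is precisely what is \emph{extracted from} the homotopy, so invoking a ``common Euler class'' to produce the homotopy would be circular. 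You also assert without justification that the constructed foliation is homotopic rel $K$ to the tangent field of the pages; the leaves of Roberts' foliations are not close to the pages, and establishing this compatibility is the real content of \cite[Theorem 4.3]{HKM2}: one shows that the contact structure obtained by perturbing the foliation is itself supported by the open book $(S,h)$, using transversality to the binding and the structure of the foliation near the boundary of the mapping torus, and then appeals to uniqueness of the supported contact structure up to isotopy. Finally, the foliations produced here are only $C^{1,0}$ (or $C^0$), so the classical Eliashberg--Thurston theorem does not apply directly; the perturbation requires the approximation theorems of \cite{Bn} and \cite{KR2}, exactly as the paper points out.
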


\subsection{Fractional Dehn twist coefficients via isotopies}
\label{subsec:FDTC isotopy}
Here, we define the fractional Dehn twist coefficient following the ideas of \cite{HKM1}, where the pseudo-Anosov case is dealt with. The extension to all Nielson-Thurston types is immediate. 

Let $C \cong S^1$ denote the boundary of $S$ and fix a periodic orbit $\{p_0,\cdots, p_{n-1}\} \subset C$ of the Nielsen-Thurston representative $\varphi$ of $h$ as follows: When $\varphi$ is pseudo-Anosov, we may choose $\{p_0,\cdots, p_{n-1}\}$ to be a subset of the singular points on $C$ of the stable singular foliation of $\varphi$ (see \cite[\S 5.1]{FLP} for examples of singularities on the boundary).  In the case that $\varphi$ is reducible, let  $S_0$ be the subsurface of $S$ that contains $C$. We require that $\varphi|_{S_0}$ is either pseudo-Anosov or periodic.

Assume that $p_0,\cdots, p_{n-1}$ are indexed cyclically according to the orientation on $C$ induced by that on $S$. Since the set $\{p_0,\cdots, p_{n-1}\}$ is preserved under $\varphi$, there exists an integer $k\in \{0, 1, \cdots, n-1\}$ such that $\varphi(p_0)=p_k$. Then $H_t|_{p_0}: [0,1]\rightarrow C$ defines a path on the boundary component $C$ connecting $H_0(p_0)=p_0$ to $H_1(p_0)=p_k$.

The orientation of $C$ determines an oriented subarc $\gamma_{p_0p_k}$ of $C$ from $p_0$ to $p_k$. Let $\bar \gamma_{p_0p_k}$ denote the same arc with the opposite orientation. Then $H_t(p_0) * \bar \gamma_{p_0p_k}$, the concatenation of the paths $H_t(p_0)$ and $\bar \gamma_{p_0p_k}$, is a loop in $C$ based at $p_0$. Hence there is a unique integer $m$ such that
\begin{equation}
 [H_t|_{p_0}\cdot\bar \gamma_{p_0p_k}]= [C]^m \in \pi_1(C, p_0)
 \label{equ:defhomotopyI}
\end{equation}
where $[C]$ is the generator of $\pi_1(C, p_0) \cong \mathbb Z$ determined by the orientation of $C$.

\begin{definition}[\cite{HKM1}]
The {\it fractional Dehn twist coefficient} $c(h)$ of the diffeomorphism $h$ is defined to be 
$$c(h)=m+\frac{k}{n}.$$ 
  \label{def:fractional dehn twist}
\end{definition}

\begin{remark}
Since the connected components of ${\rm Homeo}(S)$ are contractible when $S$ is hyperbolic (see Theorem 1.14 in \cite{FM} and the references therein), any two paths $H_t$ and $H'_t$ between $h$ and $\varphi$ (as above) are homotopic rel $\{h, \varphi\}$. Thus the paths $H_t|_{p_0}$ and $H'_t|_{p_0}$ are homotopic rel $\{0, 1\}$, which shows that $c(h)$ is independent of the choice of $H_t$. A similar argument shows that $c(h)$ depends only on the class of $h$ in ${\rm Mod}(S)$, and so determines an invariant for each mapping class in ${\rm Mod}(S)$. 
\end{remark}

\begin{proposition}{\rm (cf. \cite[Theorem 4.4]{KR1})} \label{prop:lower bound FDTC}
Let $S$ be a compact orientable hyperbolic surface with connected boundary $C$ and let $h$ be a diffeomorphism of $S$ which restricts to the identity on $\partial S$. If $h$ is pseudo-Anosov and $c(h)\neq 0$, then    
 \begin{displaymath}
  |c(h)|\geq \frac{1}{-2\chi(S)}.
 \end{displaymath}
 \end{proposition}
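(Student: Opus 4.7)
The plan is to combine two observations. First, the definition of $c(h)$ forces it to lie in $\frac{1}{n}\mathbb{Z}$, where $n$ is the size of the chosen periodic orbit on $C$; in particular $|c(h)| \geq 1/n$ whenever $c(h) \neq 0$. Second, $n$ is bounded above by the number of boundary singularities of the stable foliation of the pseudo-Anosov representative $\varphi$, and this count in turn is bounded by $-2\chi(S)$ via an Euler--Poincar\'e index computation. Putting these two bounds together will give the required inequality.

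For the first step, recall from Definition \ref{def:fractional dehn twist} that $c(h) = m + k/n$ with $m \in \mathbb{Z}$ and $k \in \{0, 1, \ldots, n-1\}$. Thus $nc(h) = nm + k$ is an integer, which is nonzero under the assumption $c(h) \neq 0$, so $|c(h)| \geq 1/n$. By construction, $\{p_0, \ldots, p_{n-1}\}$ is contained in the finite set of singular points of $\mathcal{F}^s$ lying on $C$; hence if $N$ denotes the total number of such boundary singularities, then $n \leq N$, and it suffices to show $N \leq -2\chi(S)$.

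For the remaining bound I would collapse the single boundary component $C$ to a point, producing a closed orientable surface $\hat S$ with $\chi(\hat S) = \chi(S) + 1$. The stable foliation $\mathcal{F}^s$ on $S$ descends to a singular foliation on $\hat S$: interior singularities survive unchanged, while the collapsed point becomes a new singularity whose prong number equals $P := \sum_j p_j$, where $p_j \geq 1$ is the number of stable prongs of the $j$-th boundary singularity emerging into the interior of $S$. Since interior singularities of a pseudo-Anosov foliation are $k_i$-pronged with $k_i \geq 3$, the Euler--Poincar\'e index formula $\sum_{\rm sing}(1 - k/2) = \chi(\hat S)$, applied to $\hat S$ and cleared of fractions, becomes
\[
P + \sum_i (k_i - 2) = -2\chi(S).
\]
All terms on the left are nonnegative, so $N \leq P \leq -2\chi(S)$ and the proposition follows.

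The main obstacle is the Euler--Poincar\'e step: one must verify that $\mathcal{F}^s$ genuinely extends to a measured singular foliation on $\hat S$ with the claimed prong structure at the collapsed point, and that interior singularities of $\mathcal{F}^s$ really have $k_i \geq 3$ (so that no spurious $1$- or $2$-prongs can make the bound slack). Both ingredients are standard features of the structure theory of pseudo-Anosov maps on surfaces with boundary (see, e.g., \cite{FLP}), so the remaining work is a matter of carefully assembling known facts.
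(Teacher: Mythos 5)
Your argument is correct and is essentially the paper's proof: it likewise writes $c(h)$ as a fraction whose denominator is bounded by the number of boundary singularities of the stable foliation of the pseudo-Anosov representative, and then bounds that number by $-2\chi(S)$ via the Euler--Poincar\'e formula (with the collapsed boundary treated as one singularity and interior singularities having at least three prongs). Your slight variations (using the orbit size $n\leq N$ rather than the full count, and allowing $p_j\geq 1$ prongs per boundary singularity) do not change the substance.
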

 
\begin{proof}
This is a straightforward consequence of the Euler-Poincar\'e formula \cite[Proposition 11.4]{FM}. 

Let $\mathcal{F}^s$ be the stable singular foliation of the pseudo-Anosov homeomorphism $\varphi$ that is freely isotopic to $h$. By definition, $c(h)$ can be written as a possibly unreduced fraction $p/q$ with $q > 0$ being the number of the singular points of $\mathcal{F}^s$ on $C$. 
 
Let $\{x_i\}$ be the singular points of $\mathcal{F}^s$ contained in the interior of $S$. For each $i$, $n_i \geq 3$ will denote the number of prongs of $\mathcal{F}^s$ at $x_i$. Then by the Euler-Poincar\'e formula,  we have 
  \begin{displaymath}
   2(\chi(S) +1)= (2-q) + \sum_{i} (2-n_i). 
  \end{displaymath}
Since $n_i\geq 3$, $\sum_{x_i} (2-n_i) \leq 0$ with equality only if $\{x_i\}$ is empty. It follows that 
 $-2\chi(S)\geq q$. By assumption, $c(h)\neq 0$. Therefore,  
 \begin{displaymath}
   |c(h)|=\frac{|p|}{q}\geq \frac{1}{q}\geq \frac{1}{-2\chi(S)}.
 \end{displaymath}
\end{proof}

\subsection{Fractional Dehn twist coefficients via translation numbers}
\label{subsec:FDTC translation number}
Recall that the group ${\rm Homeo}_+(S^1)$ has the following central extension: 
\begin{equation}
 1 \longrightarrow \mathbb Z \longrightarrow \thomeo_+(S^1) \stackrel{\pi}{\longrightarrow} {\rm Homeo}_+(S^1) \longrightarrow 1,
\label{equ:homeoext}
\end{equation}
where $\thomeo_+(S^1)$ is the universal covering group of ${\rm Homeo}_+(S^1)$, consisting of the elements of ${\rm Homeo}_+(\mathbb{R})$ which commute with translation by $1$, which we denote by ${\rm sh}(1)$. The kernel  of the covering homomorphism $\pi$ is the group of integral translations of the real line.  

Poincar\'e showed that for $\tilde f \in \thomeo_+(S^1)$ and $x_0 \in \mathbb R$, the limit 
\begin{displaymath}
\lim_{n\to \infty} \frac{\tilde f^n(x_0)-x_0}{n} 
\end{displaymath} 
exists and is independent of $x_0$ (see \cite[\S 5]{Ghy}). He defined the {\it translation number} $\tau(\tilde f)$ of $\tilde f$ to be this common limit.

Let $\tilde S$ be the universal cover of $S$ and $\tilde{C}\subset\tilde S$ be a lift of $\partial S=C$. By construction, we can take $\tilde S$ to be a closed subset of $\mathbb H^2$ with geodesic boundary. In particular, $\tilde C$ is geodesic.  We use $\partial_{\infty} \tilde{S}$ to denote the intersection of the Euclidean closure of $\tilde S$ in $\overline{\mathbb H}^2$ with $\partial \overline{\mathbb{H}}^2$. Then $\partial \tilde S \cup \partial_{\infty} \tilde{S}$ is homeomorphic to a circle. The complement of the closure of $\tilde C$ in this circle is homeomorphic to $\mathbb{R}$ which we parameterise and orient so that the lift of the Dehn twist along $C$, denoted by $T_{\partial S}$, is the translation ${\rm sh}(1)$.  

Given any element $f\in {\rm Mod}(S)$, let $\tilde f: \tilde{S}\rightarrow \tilde{S}$ denote the unique lift of $f$ satisfying $\tilde f|_{\tilde{C}}=id_{\tilde{C}}$. This correspondence defines an embedding of groups 
\begin{displaymath}
 \iota: {\rm Mod}(S)\rightarrow \thomeo_+(S^1),
\end{displaymath}
with $\iota(T_{\partial S})={\rm sh}(1)$. It was shown in \cite{Mal} (see also \cite[Theorem 4.16]{IK}) that  the fractional Dehn twist coefficient of $h$ in ${\rm Mod}(S)$ satisfies 
\begin{displaymath}
 c(h)=\tau(\iota(h)).
\end{displaymath}


Here are a few properties of fractional Dehn twist coefficients inherited from those of translation numbers (cf. \cite[\S 5]{Ghy}). 

\begin{lemma} \label{lem:poincare translation number} 
The fractional Dehn twist coefficient map $c: {\rm Mod}(S) \to \mathbb Q$ takes the value $1$ on $T_{\partial S}$ and is invariant under conjugation. If $h_1, h_2 \in \hbox{Mod}(S)$ commute, then $c(h_1h_2) = c(h_1) + c(h_2)$. In particular, $c(h^n)=nc(h)$ for any  $h \in {\rm Mod}(S)$ and $n \in \mathbb{Z}$.  
\end{lemma}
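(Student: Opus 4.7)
The plan is to reduce every assertion to a corresponding property of the Poincaré translation number $\tau$, exploiting the identity $c(h) = \tau(\iota(h))$ recalled at the end of \S\ref{subsec:FDTC translation number}. Since $\iota : \mathrm{Mod}(S) \to \thomeo_+(S^1)$ is a group homomorphism (in fact an embedding), conjugation, products and powers in $\mathrm{Mod}(S)$ translate to the corresponding operations on lifts, and the lemma becomes a statement purely about translation numbers in $\thomeo_+(S^1)$.

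For the value on $T_{\partial S}$: by the normalisation chosen in \S\ref{subsec:FDTC translation number}, $\iota(T_{\partial S}) = \mathrm{sh}(1)$, and a direct calculation from the definition gives $\tau(\mathrm{sh}(1)) = \lim_{n\to\infty}(x_0+n-x_0)/n = 1$. For conjugation invariance, I would use the standard fact that any $\tilde g \in \thomeo_+(S^1)$ commutes with $\mathrm{sh}(1)$, hence $|\tilde g(y)-y|$ is uniformly bounded in $y$. Writing $\iota(g h g^{-1})^n(x_0) = \iota(g)\,\iota(h)^n(\iota(g)^{-1}(x_0))$, the outer applications of $\iota(g)$ and $\iota(g)^{-1}$ change $x_0$ by a bounded amount, and dividing by $n$ and letting $n \to \infty$ yields $\tau(\iota(ghg^{-1})) = \tau(\iota(h))$.

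The core input for the commuting case is the well-known estimate that for every $\tilde f \in \thomeo_+(S^1)$ and every $x \in \mathbb{R}$ and $n \in \mathbb{Z}$,
\[
\bigl|\tilde f^n(x) - x - n\tau(\tilde f)\bigr| \le 1.
\]
Given commuting $h_1, h_2$, set $\tilde f_i = \iota(h_i)$, so $\tilde f_1 \tilde f_2 = \tilde f_2 \tilde f_1$ and therefore $(\tilde f_1 \tilde f_2)^n = \tilde f_1^n \tilde f_2^n$. I would then decompose
\[
(\tilde f_1 \tilde f_2)^n(x_0) - x_0 = \bigl(\tilde f_1^n(\tilde f_2^n(x_0)) - \tilde f_2^n(x_0)\bigr) + \bigl(\tilde f_2^n(x_0) - x_0\bigr),
\]
apply the displayed estimate to each bracket (with basepoints $\tilde f_2^n(x_0)$ and $x_0$ respectively), divide by $n$, and take the limit to conclude $\tau(\tilde f_1 \tilde f_2) = \tau(\tilde f_1) + \tau(\tilde f_2)$. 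Translating back via $\iota$ gives $c(h_1 h_2) = c(h_1) + c(h_2)$. The final assertion $c(h^n) = nc(h)$ then follows by induction for $n \geq 0$, since $h$ commutes with $h^k$, and the negative case follows by applying additivity to $h$ and $h^{-1}$ (whose product is the identity, with $c = 0$).

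The only real technical point is the uniform bound $|\tilde f^n(x) - x - n\tau(\tilde f)| \le 1$; once it is invoked, both conjugation invariance and additivity on commuting elements fall out as short computations, and the rest of the lemma is formal.
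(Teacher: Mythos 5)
Your proposal is correct and follows the same route the paper takes: the paper deduces the lemma from the identity $c(h)=\tau(\iota(h))$ and the standard properties of Poincar\'e translation numbers (homomorphism $\iota$, $\tau(\mathrm{sh}(1))=1$, conjugation invariance, additivity on commuting elements), citing Ghys's survey for the latter. The only difference is that you supply the short proofs of those translation-number facts (via the quasimorphism bound $|\tilde f^n(x)-x-n\tau(\tilde f)|\le 1$) rather than quoting them, and your verifications are accurate.
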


\section{Euler classes of circle bundles and representations}
\label{sec: euler class}
In this section, we first review the definition of the the Euler class of an oriented $S^1$-bundle over a CW complex $X$ and how it relates to the problem of lifting a representation $\rho: \pi_1(X)\rightarrow {\rm Homeo}_+(S^1)$ to a representation into $\widetilde{\rm Homeo}_+(S^1)$ (see (\ref{equ:homeoext})).

\subsection{Euler classes of circle bundles}
\label{subsec: Euler class circle bundle}
Let $\xi$ be an oriented circle bundle $E \to X$ where $X$ is a CW complex. The Euler class $e(\xi) \in H^2(X)$ is the obstruction to finding a section of $\xi$ and its vanishing is equivalent to the triviality of $\xi$ as a bundle. A representative cocycle for $e(\xi)$ is constructed as follows. See Chapter 4 of \cite{CC}, and in particular \S 4.3 and \S 4.4, for the details.  

Since $S^1$ is a $K(\mathbb{Z},1)$, the only obstruction to the existence of a section of $\xi$ arises when one tries to extend a section over the $1$-skeleton $X^{(1)}$ of $X$ to the $2$-skeleton $X^{(2)}$. Fix a section $\sigma: X^{(1)}\rightarrow E $ and define a cellular $2$-cochain $c_\sigma: C_2(X)\rightarrow \mathbb{Z}$ as follows. Let  $\varphi_\alpha: D^2\rightarrow X^{(2)}$ be the characteristic map of a $2$-cell $e_\alpha$ and let $\xi_{D^2}=(E_{D^2}\rightarrow D^2)$ denote the pull-back of $\xi$ through $\varphi_\alpha$.  Then $\sigma$ defines a section of $\xi_{D^2}$ over $\partial D^2$. Since $D^2$ is contractible, $\xi_{D^2}$ is trivial. By fixing a trivialization $E_{D^2}\rightarrow D^2\times S^1$, one has the following composite map from $S^1$ to $S^1$
\begin{displaymath}
S^1 =  \partial D^2\rightarrow E_{D^2}\rightarrow D^2\times S^1 \rightarrow S^1.
\end{displaymath}
The value of $c_\sigma$ on $e_\alpha$ is defined to be the degree of this map. This $2$-cochain is actually a cocycle whose cohomology class $[c_\sigma]$ is independent of the choices made in its construction. Further, the class is equal to the Euler class $e(\xi)$. 

Given an oriented $2$-disk-bundle or an oriented $\mathbb R^2$-bundle, there is an associated oriented circle bundle $\xi$ over $X$. The Euler class of the $2$-disk-bundle or the $\mathbb R^2$-bundle is defined to be $e(\xi)$.  

\subsection{Euler classes and Thom classes} 
\label{subsec: euler and thom}

For later use, we record how to express the Euler class of an oriented $S^1$-bundle $\xi$ in terms of the Thom class of the associated disk bundle. For details, see \cite[\S 5.7]{Spa}, where the Thom class is referred to as the {\it orientation class} and the Euler class is referred to as the {\it characteristic class}.

Consider the mapping cylinder $D_\xi \to X$ of an oriented circle bundle $E \to X$. This is an oriented $2$-disk bundle and as such has a Thom class $u_\xi \in H^2(D_\xi, E)$ uniquely characterised by the condition that for each disk fibre $D$ of $D_\xi$, the image of $u_\xi$ under the restriction homomorphism $H^2(D_\xi, E) \to H^2(D, \partial D)$ is the orientation generator. The Euler class $e(\xi)$ of $\xi$ is the image of $u_\xi$ under the composition $H^2(D_\xi, E) \to H^2(D_\xi) \xrightarrow{\cong} H^2(X)$.

\subsection{Lifting representations with values in ${\rm Homeo}_+(S^1)$}
Fix a representation $\rho:\pi_1(X)\rightarrow {\rm Homeo}_+(S^1)$. There is an associated oriented circle bundle $E_\rho \to X$ whose total space is defined by 
 \begin{displaymath}
  E_\rho = \widetilde X \times S^1 /(x, v)\sim  (g\cdot x, \rho(g) v),
 \end{displaymath}
 where $\widetilde X$ is the universal cover of $X$. The projection map $\widetilde X \times S^1 \to \widetilde X$ descends to the bundle map $E_\rho \to X$.

\begin{lemma}[\cite{Mil} Lemma 2]
A representation $\rho: \pi_1(X) \rightarrow {\rm Homeo}_+(S^1)$ lifts to a representation $\tilde{\rho}: \pi_1(X)\rightarrow \thomeo_+(S^1)$ if and only if the Euler class of the circle bundle $E_\rho$ vanishes. 
 \qed
 \label{lem:milnor Euler class}
\end{lemma}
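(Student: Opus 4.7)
The plan is to exploit the obstruction-theoretic description of the Euler class recalled in \S\ref{subsec: Euler class circle bundle}: since $S^1$ is a $K(\mathbb Z,1)$, the Euler class is the only obstruction to the existence of a global section of an oriented $S^1$-bundle over a CW complex. It will therefore suffice to prove that $\rho$ admits a lift $\tilde\rho$ if and only if $E_\rho \to X$ admits a section.

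For the forward implication, given a lift $\tilde\rho : \pi_1(X) \to \thomeo_+(S^1)$, I would assemble the associated $\mathbb R$-bundle $\widetilde E_\rho = \widetilde X \times \mathbb R / (x,t)\sim(g\cdot x, \tilde\rho(g)(t))$. Centrality of $\ker(\pi) \cong \mathbb Z$ in $\thomeo_+(S^1)$ ensures that the integer-translation action on $\mathbb R$ commutes with the $\pi_1(X)$-action, and quotienting $\widetilde E_\rho$ by this residual $\mathbb Z$-action returns $E_\rho$. Since the fibre $\mathbb R$ is contractible, an obstruction-theoretic argument supplies $\widetilde E_\rho \to X$ with a global section; composing this section with the covering projection $\widetilde E_\rho \to E_\rho$ produces a section of $E_\rho$, and consequently $e(E_\rho) = 0$.

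For the converse, assume $e(E_\rho) = 0$. I would then obtain a section $\sigma : X \to E_\rho$ and pull it back along the universal cover $\widetilde X \to X$ to a $\pi_1(X)$-equivariant section of $\widetilde X \times S^1 \to \widetilde X$ of the form $\tilde x \mapsto (\tilde x, f(\tilde x))$, so that $f : \widetilde X \to S^1$ satisfies $f(g \cdot \tilde x) = \rho(g)(f(\tilde x))$ for all $g \in \pi_1(X)$. Since $\widetilde X$ is simply connected, $f$ admits a continuous lift $\tilde f : \widetilde X \to \mathbb R$, and for each $g \in \pi_1(X)$ I would define $\tilde\rho(g) \in \thomeo_+(S^1)$ to be the unique lift of $\rho(g)$ characterised by $\tilde\rho(g)(\tilde f(\tilde x_0)) = \tilde f(g \cdot \tilde x_0)$ at a chosen basepoint $\tilde x_0 \in \widetilde X$.

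The main obstacle, as I see it, will be to confirm that this recipe $g \mapsto \tilde\rho(g)$ is genuinely a homomorphism and that it intertwines $\tilde f$ with the deck action at every point, not just at $\tilde x_0$. Both facts will follow from a single monodromy observation: the two continuous functions $\tilde x \mapsto \tilde f(g \cdot \tilde x)$ and $\tilde x \mapsto \tilde\rho(g)(\tilde f(\tilde x))$ are lifts along $\mathbb R \to S^1$ of the common map $\tilde x \mapsto \rho(g)(f(\tilde x))$ that agree at $\tilde x_0$, so connectedness of $\widetilde X$ forces them to agree throughout. Evaluating at $\tilde f(h \cdot \tilde x_0)$ then exhibits $\tilde\rho(gh)$ and $\tilde\rho(g)\tilde\rho(h)$ as lifts of $\rho(gh)$ taking the same value at a common point, forcing equality and completing the homomorphism check.
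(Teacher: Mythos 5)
Your argument is correct, and it is a genuinely self-contained route rather than the one the paper relies on: the paper simply quotes Milnor's Lemma 2, whose proof (and the discussion in Remark \ref{rem: Euler class of a representation}) proceeds group-cohomologically, by choosing set-theoretic lifts of the $\rho(g)$, measuring the failure to be a homomorphism by an integer-valued $2$-cocycle representing $\rho^*(e_{S^1})$, and identifying this class, up to sign, with the obstruction cocycle for a section of $E_\rho$. You instead argue geometrically in both directions: for the ``lift implies $e(E_\rho)=0$'' direction you form the flat $\mathbb R$-bundle $\widetilde E_\rho$ associated to $\tilde\rho$, use centrality of the deck $\mathbb Z$ in $\thomeo_+(S^1)$ to recover $E_\rho$ as its fibrewise $\mathbb Z$-quotient, and get a section of $E_\rho$ from a section of the contractible-fibre bundle; for the converse you convert a section of $E_\rho$ into a $\rho$-equivariant map $f:\widetilde X\to S^1$, lift it to $\tilde f:\widetilde X\to\mathbb R$ using simple connectivity, and define $\tilde\rho(g)$ by unique path-lifting, with the homomorphism property and full equivariance both following from uniqueness of lifts on the connected space $\widetilde X$. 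Two small points are left implicit but are standard and harmless: any lift to $\mathbb R$ of an orientation-preserving circle homeomorphism automatically commutes with ${\rm sh}(1)$ (so your $\tilde\rho(g)$ really lies in $\thomeo_+(S^1)$), and the existence of a section of $\widetilde E_\rho$, respectively the equivalence ``section of $E_\rho$ exists iff $e(E_\rho)=0$'', is the obstruction-theoretic input already recorded in \S\ref{subsec: Euler class circle bundle}. What your approach buys is a proof that avoids group cohomology entirely; what the paper's citation buys is the sharper bookkeeping of Remark \ref{rem: Euler class of a representation}, namely the identification $e(\rho)=\rho^*(e_{S^1})=\pm e(E_\rho)$ in $H^2(\pi_1(M))\cong H^2(M)$, which is used elsewhere in the paper.
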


\begin{remark}
\label{rem: Euler class of a representation}
Each central extension of a group $G$ by an abelian group $A$ determines a class $e \in H^2(G; A)$, called the characteristic class of the extension,  and the correspondence is bijective (cf.~\cite[\S 6.1]{Ghy}). Such an extension is isomorphic to the trivial extension $1 \to A \to G \times A \to G \to 1$ if and only if its characteristic class is zero. It is known that the characteristic class $e_{S^1}$ of the extension (\ref{equ:homeoext}) generates $H^2({\rm Homeo}_+(S^1))\cong \mathbb{Z}$. See \cite[Example 2.12]{MM}. Hence given a representation $\rho: G\rightarrow {\rm Homeo}_+(S^1)$, $\rho$ admits a lift to a representation with values in $\thomeo_+(S^1)$ if and only if $e(\rho)=\rho^*(e_{S^1})= 0$. When $G = \pi_1(M)$, the two obstruction classes described above coincide. More precisely, when $M$ is irreducible, we have $e(\rho)= \pm e(E_\rho) \in H^2(M)=H^2(\pi_1(M))$.  See the proof of \cite[Lemma 2]{Mil}.
\end{remark}

\subsection{ The vanishing of the Euler class of certain lifted contact structures}
\label{subsec: euler and contact}

Let $M$ be an oriented rational homology $3$-sphere and $L= \bigsqcup_i K_i$ an oriented null-homologous link in $M$. Fix a compact, connected, oriented surface $S$ properly embedded in $X(L)$ and let 
$$(\Sigma_n(L), X_n(L), \widetilde L) \xrightarrow{\;\; p \;\;} (M, X(L), L)$$ 
be as in \S \ref{sec:cyclic branched cover}, where $n \geq 1$. 

Let $\xi=\ker(\alpha)$ be a positive contact structure on $M$ determined by a smooth, nowhere zero $1$-form $\alpha$ and suppose that $L$ is a positively transverse to $\xi$. Recall the lift $\widetilde \xi$ of $\xi$ to $\Sigma_n(L)$ described in \S \ref{subsec: contact}. 

\begin{lemma} 
\label{lem:Euler class vanishes links}
If $e(\xi) = 0$, then $e(\widetilde \xi)=0$.
\end{lemma}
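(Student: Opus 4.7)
The plan is to use the long exact sequence of the pair $(\Sigma_n(L), X_n(L))$ together with the relative Euler class. Since $e(\xi) = 0$, $\xi$ admits a nowhere-zero section $s$ on $M$, and $p^*s$ trivializes $\widetilde\xi|_{X_n(L)} = p^*(\xi|_{X(L)})$, so $e(\widetilde\xi)|_{X_n(L)} = 0$. From the exact sequence
\begin{equation*}
H^1(X_n(L)) \xrightarrow{\delta} H^2(\Sigma_n(L), X_n(L)) \xrightarrow{j^*} H^2(\Sigma_n(L)) \xrightarrow{i^*} H^2(X_n(L)),
\end{equation*}
we may write $e(\widetilde\xi) = j^*(\alpha)$, where $\alpha$ is the relative Euler class of $(\widetilde\xi, p^*s)$, determined modulo $\operatorname{Im}(\delta)$. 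It therefore suffices to show $\alpha \in \operatorname{Im}(\delta)$.

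By excision, $H^2(\Sigma_n(L), X_n(L)) \cong \bigoplus_i H^2(N(\widetilde K_i), \partial N(\widetilde K_i)) \cong \bigoplus_i \mathbb Z$, with the $i$-th summand dual to the meridian $\widetilde\mu_i$; the component $\alpha_i$ is the obstruction to extending $p^*s$ from $\partial N(\widetilde K_i)$ to $N(\widetilde K_i)$, equivalently the meridional winding of $p^*s$ in any trivialization of $\widetilde\xi$ that extends over $N(\widetilde K_i)$. The central calculation will show that $\alpha_i$ takes the same value $c$ for every $i$. Using the local models of \S\ref{subsec: contact}, both $\xi|_{N(K_i)}$ and $\widetilde\xi|_{N(\widetilde K_i)}$ are trivialized by the canonical frames $(\partial_x + y\partial_z,\, \partial_y - x\partial_z)$ and $(\partial_{\widetilde x} + n\widetilde y\partial_{\widetilde z},\, \partial_{\widetilde y} - n\widetilde x\partial_{\widetilde z})$, respectively, each extending over the core. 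Comparing the pullback of the first frame via $(\widetilde r, \widetilde\theta, \widetilde z) \mapsto (\widetilde r, n\widetilde\theta, \widetilde z)$ with the second on $\partial N(\widetilde K_i)$ gives a change-of-basis map whose meridional winding depends only on $n$. Since $s$ extends smoothly over $K_i$, its meridional winding in the $\xi$-frame on $\partial N(K_i)$ is zero, and pulling back preserves this; hence $\alpha_i$ equals the universal winding $c$ determined by $n$.

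The final step is to realize $\alpha = (c, \ldots, c)$ in $\operatorname{Im}(\delta)$. The $n$-fold cyclic cover $X_n(L) \to X(L)$ is defined by the epimorphism $\phi: \pi_1(X(L)) \to H_1(X(L)) \xrightarrow{\,\cdot[S]\,} \mathbb Z$ reduced mod $n$, which sends each meridian $\mu_i$ to $1$. Its restriction to $\pi_1(X_n(L))$ (the kernel of $\phi$ mod $n$) takes values in $n\mathbb Z$, so $\widetilde\phi := \tfrac{1}{n}\,\phi|_{\pi_1(X_n(L))}$ defines a class in $H^1(X_n(L))$ satisfying $\widetilde\phi(\widetilde\mu_i) = \phi(n\mu_i)/n = 1$ for every $i$. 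Under the identification of $\delta$ with meridional evaluation $\widetilde h \mapsto (\widetilde h(\widetilde\mu_i))_i$, we obtain $\delta(c\,\widetilde\phi) = \alpha$, so $\alpha \in \operatorname{Im}(\delta)$ and $e(\widetilde\xi) = j^*(\alpha) = 0$. The main technical obstacle is the winding computation in the local model; the rest is homological bookkeeping.
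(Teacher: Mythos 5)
Your proof is correct and follows essentially the same route as the paper: trivialize $\widetilde\xi$ over $X_n(L)$ by pulling back a global nowhere-zero section of $\xi$ (which exists since $e(\xi)=0$), regard $e(\widetilde\xi)$ as $j^*$ of a relative class in $H^2(\Sigma_n(L),X_n(L))\cong\bigoplus_i\mathbb{Z}$ whose components are equal meridional windings depending only on $n$, and then absorb this class into the image of $\delta$ using a class in $H^1(X_n(L))$ that evaluates to $1$ on each $\widetilde\mu_i$. The only cosmetic differences are that the paper first homotopes $\xi$ so that its restriction to each meridian disk is the tangent plane field of the disk (making the winding computation pictorial and giving the explicit value $1-n$), and it uses the Poincar\'e dual of the lifted Seifert surface $\widetilde S$ where you use $\tfrac{1}{n}\phi|_{\pi_1(X_n(L))}$ -- two classes with the same values on the meridians, which is all the argument requires.
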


\begin{proof}
To simplify notation, we write $N_i$ for $N(K_i)$ and $\widetilde N_i$ for $N(\widetilde{K}_i)$. Let $D_i$ be a meridian disk of $N_i$ oriented coherently with $K_i$ and $M$. Then $\widetilde D_i=p^{-1}(D_i)$ is an oriented meridional disk in $\widetilde N_i$. We use $\widetilde m_i$ to denote the oriented boundary of $\widetilde D_i$. By construction, the $n$-fold cyclic branched cover $\Sigma_n(L)$ is obtained from $X_n(L)$ by attaching the meridian disk $\widetilde D_i$ to $X_n(L)$ along $\widetilde m_i$ for each $i$ and then plugging the boundary of the resultant $3$-manifold with $3$-cells. 

Set $\alpha_0=\alpha$ where $\alpha$ is the smooth, nowhere zero $1$-form with $\xi = \mbox{ker}(\alpha)$ described above, and fix a one-parameter family of co-oriented $2$-plane fields $\xi_t=\ker(\alpha_t)$ ($t\in [0,1]$) on $\Sigma_n(L)$ such that $\alpha_t|_{N_i}=dz_i+(1-t)r_i^2d\theta_i$. Note that $\xi_1|_{D_i}$ is the tangent bundle  $D_i$. 

Let $\widetilde \alpha_t=p^*(\alpha_t)$ over $\Sigma_n(L)\setminus \widetilde L$ where  $\Sigma_n(L) \xrightarrow{p} M$ is the branched cover. According to our choice of tubular neighborhoods $N_i$, $\widetilde N_i$ and the coordinate systems on them (\S \ref{subsec: contact}), we have 
\begin{displaymath}
 \widetilde \alpha_t|_{\widetilde N_i\setminus \widetilde K_i}=d\tilde z_i+n(1-t)\tilde r_i^2d\tilde \theta_i.
\end{displaymath}
Then $\widetilde \alpha_t$ extends smoothly to $\Sigma_n(L)$ with $\widetilde \alpha_t=d\tilde z_i$ over $\widetilde K_i$.  Hence it defines a homotopy between co-oriented $2$-plane fields $\tilde \xi=\ker(\widetilde \alpha_0)$ and $\widetilde \xi_1=\ker(\widetilde \alpha_1)$. To show $e(\widetilde \xi)=0$, we will show $e(\widetilde \xi_1)=0$.

By assumption $e(\xi_1) = e(\xi)=0$ in $H^2(M)$ so in particular, $\xi_1$ admits a nowhere zero section $\sigma$. Since $\xi_1|_{D_i}$ is the tangent bundle of $D_i$, we may suppose that $\sigma|_{D_i}=\partial_{x_i}$, where $x_i=r_i\cos(\theta_i)$ over $D_i$). See Figure \ref{fig:branched cover of a disk}. 

Let $\widetilde \sigma: X_n\rightarrow \widetilde \xi_1$ be a section of the restriction of $\widetilde \xi_1$ to $X_n$ obtained by lifting $\sigma$. Then there is a $2$-cocycle $c_{\widetilde \sigma}$ which vanishes on $X_n$ determined by $\widetilde \sigma$ with $[c_{\widetilde\sigma}]=e(\widetilde \xi_1)$ (cf. \S \ref{subsec: Euler class circle bundle}).   

Let $i: X_n(L) \rightarrow \Sigma_n(L)$ and $j: (\Sigma_n(L), \emptyset) \to (\Sigma_n(L), X_n(L))$ be the inclusions and consider the exact sequence
\begin{equation} 
\label{equ:cohomology sequence of the pair (un)branched covers}
\ldots \rightarrow H^1(\Sigma_n(L)) \rightarrow H^1(X_n(L)) \xrightarrow{\delta} H^2(\Sigma_n(L),X_n(L)) \xrightarrow{j^*} H^2(\Sigma_n(L)) \xrightarrow{i^*} H^2(X_n(L))\rightarrow \ldots  \nonumber 
\end{equation}

Since $\widetilde \sigma$ is defined over $X_n(L)$, we have $i^*([c_{\widetilde \sigma}]) = 0$ and hence $c_{\widetilde \sigma}$ represents a cohomology class in $H^2(\Sigma_n(L),X_n(L))$, which we also denoted by $[c_{\widetilde \sigma}]$. We will show that this latter class lies in the image of $\delta$ and therefore $e(\tilde\xi_1)$, which equals $j^*([c_{\widetilde \sigma}])$, is $0$.

Note that as $H_1(\widetilde N,\partial \widetilde N)\cong H^2(\widetilde N)=0$ we have 
$$H^2(\Sigma_n(L),X_n(L)) \cong H^2(\widetilde N, \partial \widetilde N) \cong {\rm Hom}(H_2(\widetilde N, \partial \widetilde N),\mathbb{Z}) \cong \oplus_i  {\rm Hom}(H_2(\widetilde N_i, \partial \widetilde N_i),\mathbb{Z}),$$
where $\widetilde N=\bigsqcup_i \widetilde N_i$, as above.
It follows that $[c_{\widetilde \sigma}] \in H^2(\Sigma_n(L),X_n(L))$ is determined by the value of $c_{\widetilde \sigma}$ on the classes $[\widetilde D_i]$ carried by the fundamental classes of the disks $\widetilde D_i$.

\vspace{1cm}

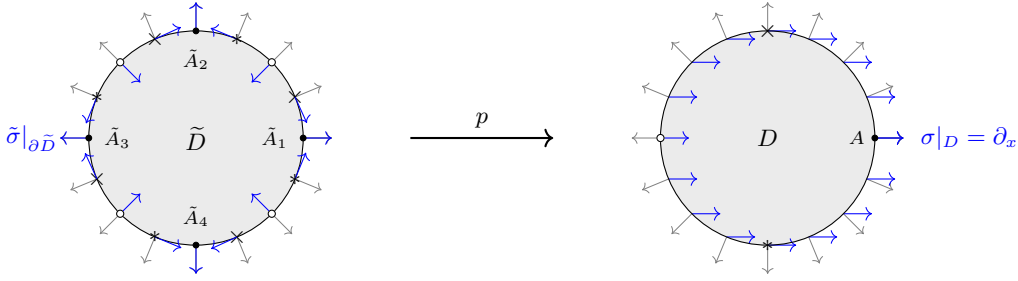
\begin{figure}[ht]

 \begin{tikzpicture}[scale=0.95]
   \centering
    \filldraw[light-gray] (3,3) circle(1.5); 
    \foreach \x in {0,22.5,...,360} {    
                \draw [help lines,->] ({3+1.5*cos(\x)}, {3+1.5*sin(\x)}) -- ({3+1.9*cos(\x)}, {3+1.9*sin(\x)});                          
        }
     \draw (3,3) circle(1.5); 
    \foreach \x in {0, 90, ..., 270}{
    	\draw [blue,->] ({3+1.5*cos(\x)}, {3+1.5*sin(\x)}) -- ({3+1.9*cos(\x)}, {3+1.9*sin(\x)});    
	\filldraw ({3+1.5*cos(\x)}, {3+1.5*sin(\x)}) circle (0.04);
	\node at ({3+1.5*cos(\x+22.5)}, {3+1.5*sin(\x+22.5)}) {\footnotesize $\times$};
	\draw [blue,->] ({3+1.5*cos(\x+22.5)},{3+1.5*sin(\x+22.5)}) -- ({3+0.4*cos(\x-65.5)+1.5*cos(\x+22.5)}, {3+0.4*sin(\x-65.5)+1.5*sin(\x+22.5)});
	\draw [blue, <-] ({3+1.1*cos(\x+45)}, {3+1.1*sin(\x+45)}) -- ({3+1.5*cos(\x+45)}, {3+1.5*sin(\x+45)});
	\filldraw [white] ({3+1.5*cos(\x+45)}, {3+1.5*sin(\x+45)}) circle (0.05);
	\draw  ({3+1.5*cos(\x+45)}, {3+1.5*sin(\x+45)}) circle (0.05);
	\draw [blue,->] ({3+1.5*cos(\x+67.5)},{3+1.5*sin(\x+67.5)}) -- ({3+0.4*cos(\x+157.5)+1.5*cos(\x+67.5)}, {3+0.4*sin(\x+157.5)+1.5*sin(\x+67.5)});
	\node at ({3+1.5*cos(\x+67.5)}, {3+1.5*sin(\x+67.5)}) {\footnotesize $\ast$};
	}
     \node at ({3+1.1*cos(0)}, {3+1.1*sin(0)}) {\tiny $\tilde A_1$};
     \node at ({3+1.1*cos(90)}, {3+1.1*sin(90)}) {\tiny $\tilde A_2$};
     \node at ({3+1.1*cos(180)}, {3+1.1*sin(180)}) {\tiny $\tilde A_3$};
     \node at ({3+1.1*cos(270)}, {3+1.1*sin(270)}) {\tiny $\tilde A_4$};
     \node [blue] at (0.7,3) {\footnotesize $\tilde\sigma|_{\partial\widetilde D}$};
       \node at (3,3) {\footnotesize $\widetilde D$};
    \filldraw[light-gray] (11,3) circle(1.5); 
     \foreach \x in {0,22.5,...,360} {    
                \draw [help lines,->] ({11+1.5*cos(\x)}, {3+1.5*sin(\x)}) -- ({11+1.9*cos(\x)}, {3+1.9*sin(\x)});   
                \draw [->, blue] ({11+1.5*cos(\x)}, {3+1.5*sin(\x)}) -- ({11.4+1.5*cos(\x)}, {3+1.5*sin(\x)});                          
        }
          \draw (11,3) circle(1.5); 
    \filldraw ({11+1.5*cos(0)}, {3+1.5*sin(0)}) circle (0.04);
    \node [left] at ({11+1.5*cos(0)}, {3+1.5*sin(0)}) {\tiny $A$};
    \node at ({11+1.5*cos(90)}, {3+1.5*sin(90)}) {{\footnotesize $\times$}};
    \filldraw [white]({11+1.5*cos(180)}, {3+1.5*sin(180)}) circle (0.05);
    \draw ({11+1.5*cos(180)}, {3+1.5*sin(180)}) circle (0.05);
     \node at ({11+1.5*cos(270)}, {3+1.5*sin(270)}) {{\footnotesize $\ast$}};
     \node [blue,right] at (13,3) {\footnotesize $\sigma|_D=\partial_x$};  
   
  \node at (11,3) {\footnotesize $D$};

   \draw [thick,->] (6,3) --(8,3);
   \node [above] at (7,3) {\footnotesize $p$};
 \end{tikzpicture}
 \caption{\footnotesize The normal vector field $\partial_r$ along $\partial D$ lifts to the normal vector field $\partial_{\tilde r}$ along $\partial \tilde D$. Also for any given point $\tilde x\in \partial \tilde D$, the angle between ${\partial_{\tilde r}}|_{\tilde x}$ and $\tilde \sigma|_{\tilde x}$ is the same with the angle between $\partial_r|_{p(\tilde x)}$ and $\sigma|_{p(\tilde x)}$. Based on these two observations, it is easy to draw $\tilde \sigma$ along $\partial \tilde D$.  In this figure, we illustrate $\tilde \sigma$ alone the boundary of a meridional disk $\tilde D$ for $4$-fold cyclic branched covers. It is also easy to see that from the point $\tilde A_1$ to $\tilde A_2$, the vector field $\tilde \sigma$ rotates by an angle of $(2\pi-\frac{2\pi}{4})$ clockwise. Hence the total rotation of $\tilde \sigma$ along $\partial \tilde D$ is $-\frac{2\times 3\pi}{4}\times 4=-3\times 2\pi$. Therefore, by the construction of $c_{\tilde \sigma}$, we have $c_{\tilde \sigma}([\tilde D])=-3$.}
  \label{fig:branched cover of a disk}
\end{figure}

Figure \ref{fig:branched cover of a disk} illustrates the calculation of the value $c_{\widetilde \sigma}$ on a meridional disk. In particular, by our choice of $\widetilde \sigma$, it follows that the value $c_{\widetilde\sigma}([\widetilde D_i])=1-n$ is independent on $i$. On the other hand, if we denote by $u \in H^1(X_n(L))$ the Poincar\'e dual of the element of $H_2(X_n(L), \partial X_n(L))$ carried by the fundamental class of the lift $\widetilde S$ of a Seifert surface $S$, our assumptions on $S$ and $L$ imply that $\delta(u) \in H^2(\Sigma_n(L), X_n(L))$ evaluates to $1$ on each $[\widetilde D_i]$.  It follows that  $[c_{\widetilde \sigma}] \in H^2(\Sigma_n(L), X_n(L))$ is $(1-n)\delta(u)$. In particular, it lies in the image of $\delta$, which completes the proof.  
\end{proof}

\section{Universal circle actions}
\label{sec:taut foliations circle}

Throughout this section and the next, $M$ will denote a closed connected oriented $3$-manifold and $\mathcal{F}$ a topological co-oriented taut foliation on $M$. Such foliations are known to be isotopic to foliations whose leaves are smoothly immersed and whose tangent planes vary continuously across $M$ (\cite{Cal1}). We assume below that $\mathcal{F}$ satisfies this degree of  smoothness. Consequently, the tangent planes of $\mathcal{F}$ determine a $2$-plane subbundle $T\mathcal{F}$ of $TM$. Orient $T\mathcal{F}$ so that its orientation together with the co-orientation of $\mathcal{F}$ determines the orientation of $M$. 

Suppose that there is a Riemannian metric $g$ on $M$ whose restriction to the leaves of $\mathcal F$ has constant curvature $-1$. By this we mean that the restriction of $g$ to each plaque of $\mathcal F$ satisfies this curvature condition. Ostensibly, this appears to be a strong constraint on $M$ and $\mathcal{F}$. It precludes, for instance, the existence of a leaf of $\mathcal{F}$ which is homeomorphic to either a $2$-sphere or torus. However, work of Plante and Candel (\cite{Can,Pla}) shows that when $M$ is a rational homology $3$-sphere, such a metric always exists. See the proof of Theorem \ref{prop:taut foliation left orderability}.

Given such a metric $g$ on $M$, Thurston constructed a circle $S^1_{univ}$ associated to $\mathcal F$ and a non-trivial homomorphism $\rho_{univ}: \pi_1(M) \rightarrow {\rm Homeo}_+(S^1_{univ})$ determined by the geometry of $g$. In this section, we review the construction of $\rho_{univ}$ following the approach found in \cite{CD} (see also \cite{Cal2}).  

\subsection{Bundles from circles at infinity}
\label{subsubsec:circle bundles at infinity}
Let $(M, g, \mathcal F)$ be as above. As $\mathcal F$ is taut, the inclusion map induces an injection from the fundamental group of each of its leaves to $\pi_1(M)$ (\cite{Nov}). Hence each leaf of the pull-back foliation $\widetilde{\mathcal{F}}$ on the universal cover $\widetilde M$ of $M$ is simply-connected. 

The {\it leaf space} $\mathcal{L}$ of $\widetilde{\mathcal{F}}$ is the quotient space of $\widetilde{M}$ obtained by collapsing each leaf of $\widetilde{\mathcal{F}}$ to a point. The simple-connectivity of $\widetilde M$ implies that transversals to $\widetilde{\mathcal F}$ map homeomorphically to their images in $\mathcal L$ and as such, the co-orientation on $\mathcal{F}$ determines an orientation on these images. Globally, $\mathcal{L}$ is an oriented, though not necessarily Hausdorff, $1$-manifold (cf. \cite[Corollary D.1.2]{CC}).

We use the Poincar\'e disk model for the hyperbolic plane $\mathbb{H}^2$. In particular, the underling space of $\mathbb{H}^2$ is the open unit ball in $\mathbb R^2$ whose closure, denoted by $\overline{\mathbb{H}}^2$, is the unit disk. The boundary of $\overline{\mathbb{H}}^2$ is the unit circle $S^1$ and is called the boundary of $\mathbb{H}^2$ at infinity.  Given a point $p$ in $\mathbb{H}^2$ and a unit tangent vector $v\in UT_p\mathbb{H}^2$, there is a unique geodesic ray $\gamma_{p,v}: [0, \infty) \to \mathbb{H}^2$ for which $\gamma_{p,v}(0) = p$ and $\dot{\gamma}_{p,v}(0) = v$, and this geodesic ray limits to a unique point of $\partial \overline{\mathbb{H}}^2$.  This correspondence determines a canonical homeomorphism between $UT_p\mathbb{H}^2$ and $\partial \overline{\mathbb{H}}^2$ for any $p\in \mathbb{H}^2$. 

Since each leaf $\lambda$ of $\widetilde{\mathcal{F}}$ is isometric to the hyperbolic plane $\mathbb{H}^2$ with respect to the pull-back $\tilde g$ of $g$ to $\widetilde M$, each $\lambda$ gives rise to a circle at infinity which we denote by $\partial_\infty \lambda$. This association allows us to define two related $S^1$-bundles with fibres $\partial_\infty\lambda$. The first, denoted by $\bar{E}_\infty$, has base $\mathcal{L}$ and the second, denoted by $E_\infty$, has base $\widetilde M$. The topologies of these bundles are defined similarly. 

Let $\{(U_\alpha, \varphi_\alpha)\}$ be a regular foliated atlas of $\widetilde{\mathcal{F}}$ such that $\varphi_\alpha(U_\alpha)\cong R_\alpha \times B_\alpha$ where $R_\alpha$ is a rectangular region in $\mathbb{R}^2$ and $B_\alpha$ is an open interval in $\mathbb{R}$. We use $\mathcal F_\alpha$ to denote the foliation on $R_\alpha \times B_\alpha$ determined by the plaques $R_\alpha \times \{x\}$.

Recall that we have assumed that the transition maps $\varphi_\beta\circ \varphi_\alpha^{-1}$ are horizontally smooth. In particular, the differential ``$D(\varphi_\beta\circ \varphi_\alpha^{-1})$" is 
defined and varies continuously over $T\mathcal F_\alpha|_{\varphi_\alpha(U_\alpha \cap U_\beta)}$. Hence if $UT\widetilde{\mathcal{F}}$ and $UTR_\alpha$ denote the unit tangent bundles of $\widetilde{\mathcal F}$ and $R_\alpha$, the atlas $\{(U_\alpha, \varphi_\alpha)\}$ determines local trivialisations $\{\widetilde{\varphi}_\alpha\}$ of $UT\widetilde{\mathcal F}$: 
$$UT\widetilde{\mathcal F}|_{U_\alpha}\rightarrow (UTR_\alpha) \times B_\alpha \equiv (R_\alpha \times S^1) \times B_\alpha \equiv (R_\alpha \times B_\alpha) \times S^1 \cong U_\alpha \times S^1$$
whose transition functions $\widetilde{\varphi}_\beta \circ \widetilde{\varphi}_\alpha^{-1}: \varphi_\alpha(U_\alpha \cap U_\beta)  \times S^1 \to \varphi_\beta(U_\alpha \cap U_\beta)  \times S^1$ are continuous. 

Consider the fibre-preserving bijection 
\begin{equation} \label{eqn: G}
G: UT\widetilde{\mathcal{F}} \rightarrow E_\infty, (\tilde p, v) \mapsto \gamma_{(\tilde p, v)}
\end{equation} 
where $\gamma_{(\tilde p, v)}(t)$ is the geodesic ray on the leaf of $\widetilde{\mathcal F}$ containing $\tilde p$ which satisfies $\gamma_{(\tilde p, v)}(0)=\tilde{p}$ and $\dot{\gamma}_{(\tilde p, v)}(0)=v$. We use this bijection to topologise $E_\infty$ and endow it with the structure of a locally-trivial oriented $S^1$-bundle over $\widetilde M$ with transition functions $\{\widetilde{\varphi}_\beta \circ \widetilde{\varphi}_\alpha^{-1}\}$. 

Defining the topology on $\overline{E}_\infty$ is similar. Fix a transversal $\tau$ to $\widetilde{\mathcal F}$. The simple-connectivity of $\widetilde M$ implies that $\tau$ embeds in $\mathcal L$ with image $l$, say. As above, there is a bijection  
\begin{equation} \label{eqn: Gtau}
G_\tau: UT\widetilde{\mathcal{F}}|_\tau \rightarrow \bar E_\infty|_{l}, (\tilde p, v) \mapsto \gamma = \gamma_{(\tilde p, v)}
\end{equation} 
which we declare to be a homeomorphism. Distinct transversals with the same image in $\mathcal L$ determine the same topology on $\bar E_\infty|_{l}$ since the geometry of the leaves of $\widetilde{\mathcal F}$ varies continuously over compact subsets of $\widetilde M$. See \cite[\S 2.8]{CD} for more details and discussion.

\begin{remark} \label{remark: bundle action} 
By construction, the deck transformations of the cover $\widetilde M \to M$ determine isometries between the leaves of $\widetilde{\mathcal F}$ and as such, induce homeomorphisms between the fibres of $\bar{E}_\infty$ and $E_\infty$. The naturality of the topologies on $\bar{E}_\infty$ and $E_\infty$ is reflected in the fact these homeomorphisms determine actions of $\pi_1(M)$ on $\bar{E}_\infty$ and $E_\infty$ by bundle maps.  
\end{remark} 

\subsection{Circular orders and monotone maps}
\label{subsubsec:sections and universal circle}
A circular order on a set $O$ of cardinality $4$ or more is a collection of linear orders $<_p$ on $O \setminus \{p\}$, one for each $p \in O$, such that for $p, q \in O$, the linear orders $<_p$ and $<_q$ differ by a {\it cut} on $O \setminus \{p, q\}$. (See \cite[Definition 2.34]{Cal2} for the details.) If $O = \{x, y, z\}$ has three elements, we add the condition that $y <_x z$ if and only if $z <_y x$. Subsets of cardinality $3$ or more of circularly ordered sets inherit circular orders in the obvious way. 

The archetypal example of a circularly ordered set is an oriented circle where the linear orders $<_p$ on $S^1 \setminus \{p\}$ are those determined by the orientation. More generally, any subset of cardinality $3$ or more of an oriented circle inherits a circular order from the orientation on the circle.

Given a circularly ordered set $O$ of four or more elements, we define an ordered triple $(x, y, z) \in O^3$ to be {\it positively ordered} if there is a $p \in O \setminus \{x, y, z\}$ such that $x <_p y <_p z$. It is called {\it negatively ordered} if there is a $p \in O \setminus \{x, y, z\}$ such that $y <_p x <_p z$. We leave it to the reader to verify that a positively ordered triple is never negatively ordered and vice versa. Further, a triple of distinct points $(x, y, z)$ is positively ordered, respectively negatively ordered, if and only if $(y, z, x)$ is positively ordered, respectively negatively ordered. 
 
A totally ordered set $S$ admits a natural topology with basis consisting of the open interval $(x, y) = \{p \in S : x <_S p <_S y\}$. A map $f: S \to T$ between totally-ordered sets is called {\it monotone} if it is surjective and if $f^{-1}(t)$ is a closed interval $[x, y] = \{p \in S : x \leq_S p \leq_S y\}$ for each $t \in T$. Monotone maps are continuous and satisfy $f(s_1) \leq_T f(s_2)$ whenever $s_1 <_S s_2$. 

Analogous definitions are made for circularly ordered sets. Given distinct points $x, y$ in a circularly ordered set $O$, the {\it open interval} $(x, y)$ is $\{p \in O : (x, p, y) \hbox{ is positively ordered}\}$. Closed intervals are defined similarly. The complement of an open, respectively closed, interval is a closed, respectively open, interval. 

The set of open intervals in $O$ forms a basis of the {\it order topology} on $O$. Closed intervals are closed in this topology. If $O$ is a subset of an oriented circle with the induced circular order, then the order topology coincides with the subspace topology and the open intervals of $O$ are intersections of $O$ with open arcs of the circle. 

A map $f: O_1 \to O_2$ between circularly ordered sets is called {\it monotone} if it is surjective and point inverses are closed intervals. Then for any $p_2 \in O_2$ and $p_1 \in f^{-1}(p_2) \subset O_1$, the restriction of $f$ to $(O_1 \setminus f^{-1}(p_2), <_{p_1}) \to (O_2 \setminus \{p_2\}, <_{p_2})$ is a monotone map of totally ordered sets. Monotone maps are continous.

\subsection{Sections of $\bar{E}_\infty$ and universal circles} 
The key to the Calegari-Dunfield proof of the existence of a universal circle $S^1_{univ}$ of $\mathcal F$ is the construction of a certain set of sections $\mathcal{S} = \{\sigma: \mathcal{L}\rightarrow \bar{E}_\infty\}$ of the circle bundle $\bar{E}_\infty$ which is circularly orderable and is closed under the action of $\pi_1(M)$ on $\bar{E}_\infty$ (\cite[\S 6]{CD}). The set $\mathcal{S}$ is separable with respect to the order topology and contains no pair of distinct elements $\sigma_1, \sigma_2$ such that $(\sigma_1, \sigma_2) = \emptyset$ (i.e. $ \mathcal{S}$ has no {\it gaps}), so it can be embedded into an oriented circle as a dense ordered subspace. This circle turns out to be the universal circle $S^1_{univ}$. 

The action of $\pi_1(M)$ on $\mathcal{S}$ is order-preserving and continuous in the order topology, which implies that it extends to an orientation-preserving action on $S^1_{univ}$, yielding a homomorphism $\rho_{univ}: \pi_1(M)\rightarrow {\rm Homeo}_+(S_{univ}^1)$.

For each leaf $\lambda$ of $\mathcal{L}$, the evaluation map $e_\lambda: \mathcal{S} \rightarrow \partial_\infty\lambda$ is continuous and sends the closed interval $[\sigma_1, \sigma_2]$ into the interval $[\sigma_1(\lambda), \sigma_2(\lambda)]$. It extends to a monotone map $\phi_\lambda: S^1_{univ} \to \partial_\infty \lambda$ which, in particular, is continuous of degree one. 

It is shown in \cite{CD} that these objects satisfy the conditions of the following definition.

\begin{definition} {\rm (\cite[Definition 6.1]{CD})}
Let $\mathcal{F}$ be a co-oriented taut foliation of a closed oriented $3$-manifold $M$ and suppose that $M$ admits a Riemannian metric whose restriction to each leaf of $\mathcal F$ has constant curvature $-1$. A {\it universal circle} for $\mathcal{F}$ is a circle $S_{univ}^1$ together with the following data: 

$(1)$ A nontrivial representation 
 \begin{displaymath}
   \rho_{univ}: \pi_1(M)\rightarrow {\rm Homeo}_+(S_{univ}^1)
 \end{displaymath}

$(2)$ For every leaf $\lambda$ of the pull-back  foliation $\widetilde{\mathcal{F}}$, there is a monotone map 
 \begin{displaymath}
  \phi_\lambda: S_{univ}^1\rightarrow \partial_{\infty}\lambda
 \end{displaymath}

$(3)$ For every leaf $\lambda$ of $\widetilde{\mathcal{F}}$ and every $\alpha\in \pi_1(M)$, the following diagram commutes: 
\begin{figure}[ht]
  \centering
  \begin{tikzpicture}[scale=0.8]
   \draw [thick, ->] (2.8, 3) -- (5,3);
   \draw [thick, ->] (2.8, 1) -- (5,1);
   \draw [thick, ->] (2, 2.5) -- (2, 1.5);
   \draw [thick, ->] (5.4, 2.5) -- (5.4, 1.5);
   \node at (2,3) {$S_{univ}^1$};
   \node at (2,1) {$\partial_{\infty} \lambda$};
   \node [right] at (5,3) {$S_{univ}^1$}; 
   \node [right] at (5,1) {$\partial_{\infty} (\alpha\cdot \lambda)$};
   \node [right] at (5.5, 2) {$\phi_{\alpha\cdot \lambda}$};
   \node [left] at (2, 2) {$\phi_\lambda$};
   \node [above] at (3.9,3) {$\rho_{univ}(\alpha)$};
   \node [above] at (3.9,1) {$\alpha\cdot $};
  \end{tikzpicture}
\end{figure}

$(4)$ If $\lambda$ and $\mu$ are incomparable leaves of $\widetilde{\mathcal{F}}$, then the core of $\phi_\lambda$ is contained in the closure of a single gap of $\phi_\mu$ and vice versa.

\label{def:universal circle}
\end{definition}

We refer the reader to \cite{CD} for more details on condition (4), which will not play a role below. 

\begin{remark} \label{rem: rhouniv non-trivial} 
Suppose that $\mathcal F$ is a co-oriented taut foliation on $M$ which admits, as above, a universal circle. We claim that the image of $\rho_{univ}$ is an infinite group which is non-ableian if $M$ is a rational homology $3$-sphere. To see this, first note that $\mathcal F$ must have a non-simply connected leaf. Otherwise each of its leaves is isometric to $\mathbb H^2$ and in particular homeomorphic to $\mathbb R^2$, which implies that $M$ is the $3$-torus (\cite{Ros}, \cite{Ga2}). But this is impossible; transverse loops to $\mathcal{F}$ are homotopically non-trivial (see \cite[Theorem 4.35(3)]{Cal2}, for instance), so the assumption that each of the leaves of $\mathcal{F}$ is isometric to $\mathbb H^2$ implies that $\pi_1(M) \cong \mathbb Z^3$ would have exponential growth (\cite[Lemma 7.2]{Pla}), a contradiction. Thus $\mathcal F$ has a non-simply-connected leaf $\bar \lambda$. There is a leaf $\lambda$ of $\widetilde{\mathcal F}$ contained in the inverse image of $\bar \lambda$ which is invariant under the deck transformations corresponding to $\pi_1(\bar \lambda) \leq \pi_1(M)$. Since $\pi_1(\bar \lambda)$ acts on the hyperbolic plane $\lambda$ by isometries, it induces a faithful action of $\pi_1(\bar \lambda)$ on $\partial_\infty \lambda$. Hence as $\pi_1(\bar \lambda)$ is non-trivial, it is infinite. So by (3) of the definition of a universal circle, the image of $\rho_{univ}$ is an infinite group. If $M$ is a rational homology $3$-sphere, the image cannot be abelian as otherwise it would be finite. 
\end{remark}

\section{The Euler class of the universal circle action}
\label{sec:euler class universal circle}
Recall that $T\mathcal{F}$ denotes the oriented $2$-plane field over $M$ determined by $\mathcal{F}$. The goal of this section is to prove the following proposition.

\begin{proposition} {\rm (Thurston)}
Assume that $\mathcal{F}$ is a co-oriented taut foliation on a closed oriented $3$-manifold $M$ and that there is a Riemannian metric $g$
 on $M$ which restricts to a metric of constant curvature $-1$ on each leaf $\lambda$ of $\mathcal{F}$. Let $\rho_{univ}:\pi_1(M)\rightarrow {\rm Homeo}_+(S_{univ}^1)$ be a universal circle representation associated to $(M,g,\mathcal{F})$. Then the Euler class of the oriented circle bundle $E_{\rho_{univ}}$ equals that of $T\mathcal{F}$.
 \label{prop:Euler class}
\end{proposition}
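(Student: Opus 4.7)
The plan is to route both Euler classes through a common oriented $S^1$-bundle over $M$, namely the unit tangent bundle $UT\mathcal F$ of the leaves of $\mathcal F$. By the definition of the Euler class of an oriented $2$-plane bundle recalled in \S \ref{subsec: Euler class circle bundle}, $e(T\mathcal F) = e(UT\mathcal F)$, so it suffices to relate $E_{\rho_{univ}}$ and $UT\mathcal F$ to a single intermediate bundle over $M$ by Euler-class-preserving bundle maps. The intermediate bundle will be $E_\infty / \pi_1(M)$.

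The first identification is a bundle isomorphism. The geodesic-ray bijection $G : UT\widetilde{\mathcal F} \to E_\infty$ of (\ref{eqn: G}) was declared a homeomorphism, and by Remark \ref{remark: bundle action} it is $\pi_1(M)$-equivariant because deck transformations of $\widetilde M \to M$ restrict to leafwise isometries. Hence $G$ descends to an isomorphism of oriented $S^1$-bundles $UT\mathcal F \cong E_\infty/\pi_1(M)$ over $M$. For the second map, I would assemble the monotone maps $\phi_\lambda$ from Definition \ref{def:universal circle} into a single map $\Phi : \widetilde M \times S^1_{univ} \to E_\infty$ by $(\tilde p, s) \mapsto \phi_{\lambda_{\tilde p}}(s)$, where $\lambda_{\tilde p}$ is the leaf of $\widetilde{\mathcal F}$ through $\tilde p$. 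Property $(3)$ of that definition gives $\Phi(g\tilde p, \rho_{univ}(g) s) = g \cdot \Phi(\tilde p, s)$ for every $g \in \pi_1(M)$, so once continuity of $\Phi$ is established, $\Phi$ descends to a bundle map $\overline\Phi : E_{\rho_{univ}} \to E_\infty/\pi_1(M)$ over $M$. By construction, the restriction of $\overline\Phi$ to each fibre is the monotone degree-one map $\phi_\lambda$.

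To conclude I would invoke the following folklore lemma: a continuous bundle map $f : \xi_1 \to \xi_2$ between oriented $S^1$-bundles over a CW complex whose restriction to each fibre is a degree-one map of circles preserves the Euler class. This follows from the Thom-class description of \S \ref{subsec: euler and thom}: extend $f$ to a map of mapping cylinders $F : D_{\xi_1} \to D_{\xi_2}$ by the identity on the base and by coning on each fibre; the fibrewise-degree-one hypothesis forces $F^*$ to send the Thom class of $\xi_2$ to that of $\xi_1$, hence to send $e(\xi_2)$ to $e(\xi_1)$ under the canonical isomorphisms $H^2(D_{\xi_i}) \cong H^2(M)$. Applying this to $\overline\Phi$ yields $e(E_{\rho_{univ}}) = e(E_\infty/\pi_1(M)) = e(UT\mathcal F) = e(T\mathcal F)$.

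The main obstacle will be the continuity of $\Phi$. The maps $\phi_\lambda$ are defined leafwise and $\mathcal L$ may be non-Hausdorff, so one cannot simply pull back a continuous map out of $\mathcal L$. Instead I would argue continuity through the local trivialisations of $E_\infty$ coming from foliated charts, first verifying it for $s$ in the dense circularly ordered subset $\mathcal S \subset S^1_{univ}$ of continuous sections of $\bar E_\infty \to \mathcal L$, where $\Phi(\tilde p, s) = s(\lambda_{\tilde p})$ is visibly continuous in $\tilde p$, and then extending to all $s \in S^1_{univ}$ using the monotonicity of $\phi_\lambda$ in the second variable to sandwich values at arbitrary $s$ between values at nearby elements of $\mathcal S$.
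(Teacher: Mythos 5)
Your proposal is correct and follows essentially the same route as the paper: the fibrewise map $\Phi(\tilde p,\sigma)=\phi_{\lambda}(\sigma)$, equivariance via Definition \ref{def:universal circle}(3) and the isometric action of deck transformations, the Thom-class argument that a fibrewise degree-one bundle map preserves Euler classes, and a chart-by-chart continuity argument for $\Phi$ using the dense circularly ordered set $\mathcal S$ of sections. The only (cosmetic) difference is that you descend $G$ to identify $UT\mathcal F$ with $E_\infty/\pi_1(M)$ and map into that quotient, whereas the paper composes with $G^{-1}$ upstairs before passing to quotients, and the paper makes your "sandwiching" step precise by identifying the order topology on $\mathcal S_{l_\alpha}$ with the compact-open topology and using continuity of the evaluation map on its closure.
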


\begin{remark}
We know of no proof of this fundamental result in the literature. We were made aware of it on separate occasions by Ian Agol, Danny Calegari, and Nathan Dunfield. \end{remark}

\begin{proof}
As above, $(\widetilde M, \tilde g,\widetilde{\mathcal{F}})$ denotes the universal cover of $M$ equipped with the pull-back foliation $\widetilde{\mathcal{F}}$ and the pull-back  metric $\tilde g$.

Let $\Phi: \widetilde M \times S^1_{univ} \to E_\infty$ be the fibre-preserving map sending $(\tilde{p},\sigma)$ to $\phi_\lambda(\sigma)$, where $\lambda$ is the leaf of $\mathcal F$ containing $\tilde p$ and $\phi_\lambda:S_{univ}\rightarrow \partial_\infty \lambda$ is the degree one monotone map of Definition \ref{def:universal circle}(2). The continuity of $\Phi$ will be verified in Lemma \ref{lem: continuity of Phi} and we assume it for now. 

Recall the bundle isomorphism $G:UT\widetilde{\mathcal{F}}\rightarrow E_\infty$ with $(\tilde{p},v)\mapsto \gamma_{(\tilde{p},v)}$ defined in \S\ref{subsubsec:circle bundles at infinity}. By composing $\Phi$ with $G^{-1}$, we obtain a fibre-preserving map 
 \begin{displaymath}
\widetilde F:=G^{-1}\circ \Phi: \widetilde M\times S_{univ}^1\rightarrow UT\widetilde{\mathcal{F}} 
 \end{displaymath}
which restricts to a degree one monotone map between fibres. 

We claim that $\widetilde F$ descends to a fibre-preserving map $F: E_{\rho_{univ}} \rightarrow UT\mathcal{F}$. To see this, note that $\pi_1(M)$ acts on both $\widetilde{M}\times S^1_{univ}$ and $UT\widetilde{\mathcal{F}}$ with quotient spaces  $E_{\rho_{univ}}$ and $UT\mathcal{F}$ respectively. The existence of $F$ will follow if we can show that $h\widetilde F=\widetilde Fh$ for any $h \in \pi_1(M)$.  But given $\tilde p\in \lambda$ on $\widetilde{M}$ and $\sigma\in S^1_{univ}$, we have $$\widetilde F(h\cdot(\tilde p, \sigma)) =\widetilde F(h\tilde{p}, \rho_{univ}(h)\sigma)= G^{-1}\circ\phi_{h\lambda}(\rho_{univ}(h)\sigma).$$
By Definition \ref{def:universal circle}(3),  we have $\phi_{h\lambda}(\rho_{univ}(h)h\sigma)=h\phi_\lambda(\sigma)$. Hence 
\begin{align*}
\widetilde F(h\cdot(\tilde p, \sigma))=G^{-1}\circ h\phi_\lambda(\sigma).  
\end{align*}
Since $\pi_1(M)$ acts on $(\widetilde{M},\tilde{g})$ by isometries, $$\widetilde F(h\cdot(\tilde p, \sigma))=G^{-1}\circ h\phi_\lambda(\sigma)=hG^{-1}\circ \phi_\lambda=h\widetilde F(\tilde{p},\sigma),$$
which is what we needed to show.

Let $D_{\rho_{univ}}$ and $DT\mathcal{F}$ be the oriented disk bundles associated to $E_{\rho_{univ}}$ and $UT\mathcal{F}$ respectively (cf. \S \ref{sec: euler class}), and let $F_D: D_{\rho_{univ}}\rightarrow DT\mathcal{F}$ denote the map induced by $F$. We have the following commutative diagram in which $D_\infty$ denotes the fibre of $D_{\rho_{univ}}$ at $p\in M$:  
\begin{figure}[ht]
  \centering
  \begin{tikzcd}
(D_{\infty}, S_{univ}^1) \arrow[hookrightarrow]{r} \arrow{d}{(F_D,F)|_p}
  & (D_{\rho_{univ}}, E_{\rho_{univ}}) \arrow{d}{(F_D,F)}
  \\
(DT_p\mathcal{F}, UT_p\mathcal{F}) \arrow[hookrightarrow]{r}
  & (DT\mathcal{F}, UT\mathcal{F})
\end{tikzcd}
\end{figure}  
\vspace{-.3cm} 

Since $F$ restricts to a degree one map between fibres, $(F_D,F)|_p^*$ is an isomorphism which sends the orientation class in $H^2(DT_p\mathcal{F}, UT_p\mathcal{F})$  to the orientation class  in $H^2(D_{\infty}, S_{univ}^1)$ . Hence  $(F_D,F)^*$ sends the Thom class of $DT\mathcal{F}$ in $H^2(DT\mathcal{F}, UT\mathcal{F})$ to the Thom class of $D_{\rho_{univ}}$ in $H^2(D_{\rho_{univ}}, E_{\rho_{univ}})$. By definition, then, we have $e(T\mathcal{F})=e(UT\mathcal{F})=e(E_{\rho_{univ}})$ (see \S \ref{sec: euler class}). 
\end{proof}

To complete the proof, it remains to prove that $\Phi$ is continuous. 

\begin{lemma} \label{lem: continuity of Phi}
The map $\Phi: \widetilde M \times S^1_{univ} \to E_\infty$ is continuous. 
\end{lemma}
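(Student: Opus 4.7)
The plan is to verify continuity at each point $(\tilde p_0, \sigma_0) \in \widetilde M \times S^1_{univ}$ via a sandwich argument. Let $\lambda_0$ be the leaf of $\widetilde{\mathcal F}$ through $\tilde p_0$, set $q_0 = \Phi(\tilde p_0, \sigma_0) = \phi_{\lambda_0}(\sigma_0)$, and let $V$ be an arbitrary open neighbourhood of $q_0$ in $E_\infty$. I want to produce a product neighbourhood $U \times W$ of $(\tilde p_0, \sigma_0)$ with $\Phi(U \times W) \subset V$. Using the bijection $G$ of (\ref{eqn: G}) together with the local trivialisation $\widetilde\varphi_\alpha$ coming from a regular foliated chart around $\tilde p_0$, the first step is to arrange that $V$ contains a basic product set $U' \times J$, where $U' \subset U_\alpha$ is a neighbourhood of $\tilde p_0$ and $J$ is an open arc in the $S^1$-fibre surrounding $q_0$.

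Next, exploiting density of $\mathcal S$ in $S^1_{univ}$, the absence of gaps, and the monotonicity (hence order-continuity) of $\phi_{\lambda_0}$, I would choose $\sigma_1, \sigma_2 \in \mathcal S$ with $\sigma_0 \in (\sigma_1, \sigma_2)$ and whose values $\sigma_i(\lambda_0) = \phi_{\lambda_0}(\sigma_i)$ lie well inside $J$, so that the closed arc $[\phi_{\lambda_0}(\sigma_1), \phi_{\lambda_0}(\sigma_2)]$ (taken in the circular order of $\partial_\infty \lambda_0$) sits in $V$. The open interval $W := (\sigma_1, \sigma_2)$ will serve as the neighbourhood of $\sigma_0$ in $S^1_{univ}$. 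Each $\sigma_i$, a priori a section of $\bar E_\infty \to \mathcal L$, pulls back through the quotient $\widetilde M \to \mathcal L$ to a map $\tilde\sigma_i: \widetilde M \to E_\infty$. The key technical point is that $\tilde\sigma_i$ is continuous, a consequence of the definition of the topology on $\bar E_\infty$ via the bijections $G_\tau$ of (\ref{eqn: Gtau}) combined with the horizontal smoothness of the transition maps of $\widetilde{\mathcal F}$. Shrinking $U'$ to a smaller neighbourhood $U$ of $\tilde p_0$ using this continuity, I can guarantee that for each $\tilde p \in U$ both $\tilde\sigma_1(\tilde p)$ and $\tilde\sigma_2(\tilde p)$ lie in $V$ and the fibre-arc between them in $\partial_\infty \lambda(\tilde p)$, taken in the induced circular order, is likewise contained in $V$.

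To finish, for any $(\tilde p, \sigma) \in U \times W$ with $\tilde p$ lying on the leaf $\lambda$, monotonicity of $\phi_\lambda$ forces
$$\Phi(\tilde p, \sigma) = \phi_\lambda(\sigma) \in [\phi_\lambda(\sigma_1), \phi_\lambda(\sigma_2)] = [\tilde\sigma_1(\tilde p), \tilde\sigma_2(\tilde p)] \subset V,$$
which gives the desired inclusion $\Phi(U \times W) \subset V$. The main obstacle I anticipate is the continuity of the pull-backs $\tilde\sigma_i$: in \cite{CD} the topology on $\bar E_\infty$ is declared only transversal-by-transversal through the bijections $G_\tau$, and one must patiently verify that this transversal-wise description is compatible with the ambient trivialisations of $E_\infty$ over foliated boxes, which ultimately reduces to the horizontal smoothness of $\widetilde{\mathcal F}$. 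A secondary, purely bookkeeping issue is to confirm that the correct arc in $\partial_\infty \lambda(\tilde p)$ between $\tilde\sigma_1(\tilde p)$ and $\tilde\sigma_2(\tilde p)$ is the image of $[\sigma_1, \sigma_2]$ under $\phi_{\lambda(\tilde p)}$; this follows from degree-one monotonicity and is arranged at the outset by choosing the images $\phi_{\lambda_0}(\sigma_i)$ inside an arc properly contained in $J$ and then shrinking $U$.
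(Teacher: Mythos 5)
Your sandwich argument has a genuine gap at the step where you pass from control of the two endpoint sections to control of the whole fibre-arc. Continuity of $\tilde\sigma_1$ and $\tilde\sigma_2$ only constrains the two points $\phi_\lambda(\sigma_1)$ and $\phi_\lambda(\sigma_2)$; it does not determine which of the two arcs of $\partial_\infty\lambda$ joining them is $\phi_\lambda([\sigma_1,\sigma_2])$. Degree-one monotonicity identifies $\phi_\lambda([\sigma_1,\sigma_2])$ with the positively oriented arc from $\phi_\lambda(\sigma_1)$ to $\phi_\lambda(\sigma_2)$ when these are distinct, but nothing in your argument excludes the possibility that on a leaf $\lambda$ through a point $\tilde p \in U$ arbitrarily close to $\tilde p_0$ the monotone map $\phi_\lambda$ collapses the complementary interval $[\sigma_2,\sigma_1]$ to a single point lying in $V$ and wraps $(\sigma_1,\sigma_2)$ around all of $\partial_\infty\lambda$: in that scenario $\tilde\sigma_1(\tilde p)$ and $\tilde\sigma_2(\tilde p)$ both lie in $V$, yet $\Phi(\{\tilde p\}\times W)\not\subset V$. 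Your proposed remedy (choosing $\phi_{\lambda_0}(\sigma_i)$ well inside $J$ and shrinking $U$) only constrains these same two sections, so it does not address the problem. The patch is to add at least one control point: since $\phi_{\lambda_0}$ is surjective and $\phi_{\lambda_0}([\sigma_1,\sigma_2])$ lies in a proper subarc, density of $\mathcal S$ gives $\sigma_3\in\mathcal S\cap(\sigma_2,\sigma_1)$ with $\phi_{\lambda_0}(\sigma_3)$ outside $J$; shrinking $U$ using continuity of $\tilde\sigma_3$ as well, monotonicity (which places $\phi_\lambda(\sigma_3)$ in $[\phi_\lambda(\sigma_2),\phi_\lambda(\sigma_1)]$) forces $\phi_\lambda([\sigma_1,\sigma_2])$ to be the short arc, and then your estimate goes through.

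The second weak point is that the continuity of the pull-backs $\tilde\sigma_i:\widetilde M\to E_\infty$, which you defer as ``the main obstacle,'' is not a routine compatibility check: for a fixed section it is exactly the statement that $\Phi(\cdot,\sigma)$ is continuous, and verifying it already requires combining the transversal-by-transversal topology on $\bar E_\infty$ given by the maps $G_\tau$ with the continuous variation, along plaques, of the identifications $UT_{\tilde p}\lambda\cong\partial_\infty\lambda$. This is precisely what the paper's proof is organised to deliver uniformly: over a foliation chart it factors $\Phi$ through $U_\alpha\times\bar{\mathcal S}_{l_\alpha}$, where $\mathcal S_{l_\alpha}\subset C^0(l_\alpha,\bar E_\infty|_{l_\alpha})$ is shown to carry the compact-open topology (agreeing with the order topology), and then invokes continuity of the evaluation map $C^0(l_\alpha,\bar E_\infty|_{l_\alpha})\times l_\alpha\to\bar E_\infty|_{l_\alpha}$, so no pointwise sandwich or choice of auxiliary sections is needed. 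Your route can be made to work, but until the continuity of the $\tilde\sigma_i$ is actually proved and the arc-identification repaired as above, the hardest part of the lemma has not been carried out.
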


\begin{proof}
It suffices to show that for any foliation chart $(U_\alpha, \varphi_\alpha)$ of $\widetilde{\mathcal{F}}$, the restriction $\Phi|_{U_\alpha}: U_\alpha\times S^1_{univ}\rightarrow E_\infty|_{U_\alpha}$ is continuous. (See \S \ref{subsubsec:circle bundles at infinity}.)

Let $l_\alpha$ be the open interval on $\mathcal{L}$ corresponding to a transversal in $U_\alpha$ and $e_{l_\alpha}: \mathcal{S}\rightarrow \mathcal{S}|_{l_\alpha}$ the map which restricts a section in $\mathcal{S}$ to $l_\alpha$. Now define $\mathcal{S}_{l_\alpha}$ to be the image of $e_{l_\alpha}$. That is, 
\begin{displaymath}
 \mathcal{S}_{l_\alpha} = \{\sigma|_{l_\alpha} \, : \, \sigma \in \mathcal{S}\}
\end{displaymath}
The inverse image by $e_{l_\alpha}$ of an element of $\mathcal{S}_{l_\alpha}$ is a closed interval in $\mathcal{S}$, since sections in $\mathcal{S}$ do not cross each other (\cite[\S 6.14]{CD}). Hence, the circular order on $\mathcal{S}$ defines a circular order on $\mathcal{S}_{l_\alpha}$ and if we equip $\mathcal{S}_{l_\alpha}$ with the associated order topology, then $e_{l_\alpha}$ is a monotone map between two circularly ordered sets. In particular, $e_{l_\alpha}$ is continuous. 

On the other hand, $\mathcal{S}_{l_\alpha}$ is a subset of the set of continuous functions $C^0(l_\alpha, \bar{E}|_{l_\alpha})$ from $l_\alpha$ to $\bar{E}|_{l_\alpha}$. One can check that the order topology on $\mathcal{S}_{l_\alpha}$ agrees with the subspace topology induced by the compact-open topology on $C^0(l_\alpha, \bar{E}|_{l_\alpha})$. We denote the closure of $\mathcal{S}_{l_\alpha}$ in $C^0(l_\alpha, \bar{E}|_{l_\alpha})$ by $\bar{\mathcal{S}}_{l_\alpha}$. 

Note that for any leaf $\lambda\in l_\alpha$, the evaluation map $e_\lambda: \mathcal{S}\rightarrow \partial_\infty \lambda$ factors through $\mathcal{S}_{l_\alpha}$. That is, the left-hand diagram immediately below commutes and its maps extend by continuity to yield the right-hand diagram. 

\begin{center} 
\begin{tikzpicture}[scale=0.8]
\node at (8, 4.5) {\small $U_\alpha \times \mathcal{S}$};
\node at (13, 4.5) {\small $E_\infty|_{U_\alpha}$};
\draw [thick, ->] (9, 4.5) --(12.1,4.5); 
\node at (10.5, 4.9) {\small $\Phi$}; 
\node at (8, 1.5) {\small $U_\alpha \times \mathcal{S}_{l_\alpha}$};
\draw [->, thick] (9,2) --(12.5, 4);
\node [right] at (10.8, 2.7) {\small $\Phi_{l_\alpha}$};
\draw [thick, ->] (8, 4) -- (8,2);
\node at (7, 3) {\small $1_{u_\alpha} \times e_{l_\alpha}$};
\end{tikzpicture}
\hspace{1cm} 
\begin{tikzpicture}[scale=0.8]
\node at (8, 4.5) {\small $U_\alpha \times S^1_{univ}$};
\node at (13, 4.5) {\small $E_\infty|_{U_\alpha}$};
\draw [thick, ->] (9, 4.5) --(12.1,4.5); 
\node at (10.5, 4.9) {\small$\Phi$}; 
\node at (8, 1.5) {\small $U_\alpha \times \overline{\mathcal{S}}_{l_\alpha}$};
\draw [->, thick] (9,2) --(12.5, 4);
\node [right] at (10.8, 2.7) {\small $\Phi_{l_\alpha}$};
\draw [thick, ->] (8, 4) -- (8,2);
\node at (7, 3) {\small $1_{u_\alpha} \times \overline{e}_{l_\alpha}$};
\end{tikzpicture}
\end{center}

Since the evaluation map from $C^0(l_\alpha, \bar{E}|_{l_\alpha}) \times l_\alpha \to \bar{E}|_{l_\alpha}$ is continuous with respect to the compact open topology, it follows that $
\Phi_{l_\alpha}: U_\alpha\times \bar{\mathcal{S}}_{l_\alpha}\rightarrow E_\infty|_{U_\alpha}$ is continous. Therefore, $\Phi$ is continuous over $U_\alpha\times S^1_{univ}$.  
\end{proof}

\section{Left-orderability of $3$-manifold groups and universal circles}
\label{sec: lo and taut foliations}
 
A group $G$ is said to be {\it left-orderable} if it is nontrivial and there exists a strict total order $<$ on $G$ such that if $a, b, c \in G$ and $a<b$, then $ca<cb$. 

The group ${\rm Homeo}_+(\mathbb{R})$ is left-orderable (see, for instance, the proof of \cite[Theorem 6.8]{Ghy}) and serves as a universal host for countable left-orderable groups. Indeed, a countable group $G \ne \{1\}$ is left-orderable if and only if it admits a faithful representation into ${\rm Homeo}_+(\mathbb{R})$ (cf. \cite[Theorem 6.8]{Ghy})). If $G$ is the fundamental group of an orientable irreducible $3$-manifold, the condition that the representation be faithful can be removed. 

\begin{theorem} {\rm (\cite[Theorem 1.1]{BRW})}   \label{thm:brw}
Assume that $M$ is a compact, orientable, irreducible $3$-manifold. Then $\pi_1(M)$ is left-orderable if and only if it admits a homomorphism to ${\rm Homeo}_+(\mathbb{R})$ with non-trivial image. Equivalently, $\pi_1(M)$ is left-orderable if and only if it admits a left-orderable quotient. 
\end{theorem}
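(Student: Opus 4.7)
The forward direction (LO implies both conditions) and the equivalence of the two ``iff'' conditions are essentially free: the image of any non-trivial $\phi\colon \pi_1(M) \to {\rm Homeo}_+(\mathbb{R})$ is a non-trivial LO quotient of $\pi_1(M)$ (subgroups of LO groups are LO), and conversely any non-trivial countable LO quotient embeds into ${\rm Homeo}_+(\mathbb{R})$ via the Conrad--Holland dynamical realisation of its order. Moreover, if $\pi_1(M)$ is LO then it is its own LO quotient. Hence the substantive content is the converse: \emph{a non-trivial LO quotient of $\pi_1(M)$ forces $\pi_1(M)$ itself to be LO}.

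My plan is to deduce this from the Burns--Hale theorem, which says that a group $G$ is LO if and only if every non-trivial finitely generated $H \leq G$ admits a non-trivial homomorphism to an LO group. Fix a non-trivial $\phi\colon \pi_1(M) \to L$ with $L$ left-orderable and pick any non-trivial finitely generated $H \leq \pi_1(M)$. If $\phi|_H \neq 1$ then $\phi|_H$ is itself the desired map, so I may assume $H \subseteq \ker\phi$. I will then pass to the cover $p\colon \widetilde{M}_H \to M$ with $\pi_1(\widetilde{M}_H) \cong H$ and invoke Scott's compact core theorem to produce a compact connected submanifold $N \subset \widetilde{M}_H$ whose inclusion is a $\pi_1$-isomorphism. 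Since $\widetilde{M}_H$ inherits orientability and irreducibility from $M$, I can cap off any $S^2$ components of $\partial N$ by balls from $\widetilde{M}_H$, using irreducibility, so as to arrange that every component of $\partial N$ has positive genus.

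If $\partial N \neq \emptyset$, the Poincar\'e--Lefschetz half-lives-half-dies principle gives
\begin{displaymath}
\operatorname{rank}_{\mathbb{Q}} H_1(N;\mathbb{Q}) \;\geq\; \tfrac{1}{2}\operatorname{rank}_{\mathbb{Q}} H_1(\partial N;\mathbb{Q}) \;>\; 0,
\end{displaymath}
so $H$ surjects onto $\mathbb{Z}$, supplying the required Burns--Hale input. If $\partial N = \emptyset$, then $N$ is a non-empty connected closed codimension-zero submanifold of the connected $3$-manifold $\widetilde{M}_H$, which forces $N = \widetilde{M}_H$; hence $\widetilde{M}_H$ is compact, $H$ has finite index in $\pi_1(M)$, and the image $\phi(\pi_1(M))$ is simultaneously finite and a subgroup of the torsion-free group $L$, so it is trivial, contradicting $\phi \neq 1$.

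The main obstacle I expect is the subcase $\partial N \neq \emptyset$: it leans on the two non-trivial $3$-manifold inputs (Scott's compact core theorem and half-lives-half-dies) and on the prior capping of $S^2$ boundary components of $N$, which uses the irreducibility of $\widetilde{M}_H$ in an essential way to promote a non-trivial boundary to a strictly positive first rational Betti number. The closed-boundary subcase, despite appearing delicate, reduces immediately to the elementary observation that a non-empty connected closed codimension-zero submanifold must exhaust the ambient connected manifold.
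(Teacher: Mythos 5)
The paper offers no proof of this statement: it is quoted directly from Theorem 1.1 of \cite{BRW}, and your argument is essentially the proof given there, with exactly the same ingredients (the Burns--Hale criterion, Scott's compact core theorem, and the half-lives-half-dies inequality, applied to a finitely generated subgroup lying in the kernel of the given homomorphism). Two points in your write-up deserve to be made explicit. First, the assertion that $\widetilde{M}_H$ inherits irreducibility from $M$ is not formal: it is the theorem that covers of orientable irreducible $3$-manifolds are irreducible, which rests on the equivariant sphere theorem of Meeks--Simon--Yau \cite{MSY}. Second, when you cap a sphere component $S$ of $\partial N$ with a ball $B \subset \widetilde{M}_H$ bounded by $S$ (irreducibility gives such a $B$), you must rule out the possibility $N \subset B$; this is immediate because the inclusion $N \hookrightarrow \widetilde{M}_H$ would then factor through the simply connected $B$, forcing $H = 1$, contrary to hypothesis, so $B$ meets $N$ only along $S$ and the capping proceeds as you describe. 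With these remarks added, your proof is correct and matches the approach of the cited source.
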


Consequently, 

\begin{corollary}[\cite{HS, BRW}] 
\label{cor:b1 is not zero lo}
 Let $M$ be a compact, orientable, prime $3$-manifold and let $b_1(M)$ denote its first Betti number. If $b_1(M) > 0$, then $\pi_1(M)$ is left-orderable. 
\qed 
\end{corollary}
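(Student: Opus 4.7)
The plan is to reduce the statement to Theorem~\ref{thm:brw} and a triviality about abelian groups. Since $M$ is orientable and prime, either $M \cong S^2 \times S^1$ or $M$ is irreducible. The first case is immediate: $\pi_1(S^2 \times S^1) \cong \mathbb{Z}$, which is left-orderable in the standard way. So I can focus on the irreducible case.

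Assume now $M$ is irreducible with $b_1(M) > 0$. The strategy is to produce a nontrivial homomorphism $\pi_1(M) \to \mathrm{Homeo}_+(\mathbb{R})$ and invoke Theorem~\ref{thm:brw}. First I would pass to the abelianization: the Hurewicz map gives $\pi_1(M) \twoheadrightarrow H_1(M;\mathbb{Z})$. Since $b_1(M) > 0$, the group $H_1(M;\mathbb{Z})$ has a free $\mathbb{Z}$-summand, so there is a surjection $H_1(M;\mathbb{Z}) \twoheadrightarrow \mathbb{Z}$. Composing yields a surjective (hence nontrivial) homomorphism $\varphi\colon \pi_1(M) \twoheadrightarrow \mathbb{Z}$.

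Next, identify $\mathbb{Z}$ with the subgroup of integer translations inside $\mathrm{Homeo}_+(\mathbb{R})$ (i.e., $n \mapsto \mathrm{sh}(n)$). Composing $\varphi$ with this inclusion produces a nontrivial homomorphism $\pi_1(M) \to \mathrm{Homeo}_+(\mathbb{R})$. Theorem~\ref{thm:brw}, applied to the compact, orientable, irreducible manifold $M$, then gives left-orderability of $\pi_1(M)$.

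There is essentially no technical obstacle here; the argument is a one-line application of Theorem~\ref{thm:brw} once one splits off the $S^2 \times S^1$ case. The only subtle point worth noting is that the corollary is stated for \emph{prime} rather than \emph{irreducible} manifolds, which is why the preliminary splitting into the two cases is needed; without it, Theorem~\ref{thm:brw} does not immediately apply to $S^2 \times S^1$, even though its conclusion happens to hold there trivially.
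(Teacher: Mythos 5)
Your argument is correct and matches the paper's intent exactly: the corollary is stated as an immediate consequence of Theorem~\ref{thm:brw} (hence the ``\qed'' with no written proof), with precisely the reduction you give — handle $S^2 \times S^1$ separately, and in the irreducible case compose the surjection $\pi_1(M) \twoheadrightarrow \mathbb{Z}$ coming from $b_1(M) > 0$ with the inclusion of $\mathbb{Z}$ into ${\rm Homeo}_+(\mathbb{R})$ (equivalently, note that $\mathbb{Z}$ is a left-orderable quotient). Nothing further is needed.
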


The following theorem states a known criterion for the left-orderability of the fundamental group of a rational homology $3$-sphere.  

\begin{theorem}  \label{prop:taut foliation left orderability}
Let $M$ be a rational homology $3$-sphere which admits a co-orientable taut foliation whose tangent plane field has zero Euler class. Then $\pi_1(M)$ is left-orderable.
\end{theorem}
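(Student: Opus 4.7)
The strategy is to manufacture a non-trivial homomorphism $\pi_1(M) \to \mathrm{Homeo}_+(\mathbb{R})$ and then invoke Theorem \ref{thm:brw}. Two preliminary observations set the stage. First, the existence of a co-oriented taut foliation on $M$ forces $M$ to be irreducible (Novikov's theorem prevents reducing spheres). Second, since $M$ is a rational homology $3$-sphere, the Plante--Candel theorem (already cited in \S\ref{sec:taut foliations circle}) supplies a Riemannian metric $g$ on $M$ whose restriction to each leaf of $\mathcal{F}$ has constant curvature $-1$, so the hypotheses of the universal circle construction are met.

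Next, I would invoke the machinery of Section~\ref{sec:taut foliations circle} to obtain the associated universal circle representation
\[
\rho_{univ}:\pi_1(M) \longrightarrow \mathrm{Homeo}_+(S^1_{univ}),
\]
which is non-trivial (in fact has non-abelian image) by Remark~\ref{rem: rhouniv non-trivial}. Proposition~\ref{prop:Euler class} then identifies the Euler class of the associated oriented circle bundle $E_{\rho_{univ}}$ with the Euler class $e(T\mathcal{F})$ of the tangent plane bundle of $\mathcal{F}$. By hypothesis $e(T\mathcal{F}) = 0$, so $e(E_{\rho_{univ}}) = 0$ in $H^2(M)$.

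With the Euler obstruction out of the way, Lemma~\ref{lem:milnor Euler class} guarantees that $\rho_{univ}$ lifts to a representation
\[
\widetilde{\rho}_{univ} : \pi_1(M) \longrightarrow \widetilde{\mathrm{Homeo}}_+(S^1).
\]
This lift projects onto the image of $\rho_{univ}$, which is non-trivial, so $\widetilde{\rho}_{univ}$ itself is non-trivial. Since $\widetilde{\mathrm{Homeo}}_+(S^1) \leq \mathrm{Homeo}_+(\mathbb{R})$, composition with the inclusion produces a non-trivial homomorphism $\pi_1(M) \to \mathrm{Homeo}_+(\mathbb{R})$, and Theorem~\ref{thm:brw} applied to the irreducible manifold $M$ now yields the left-orderability of $\pi_1(M)$.

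The genuine technical content all lies in earlier sections: constructing $\rho_{univ}$ (Section~\ref{sec:taut foliations circle}), proving its non-triviality (Remark~\ref{rem: rhouniv non-trivial}), and identifying $e(E_{\rho_{univ}})$ with $e(T\mathcal{F})$ (Proposition~\ref{prop:Euler class}). Given these, the present theorem reduces to a short assembly argument, with the only delicate input being the Plante--Candel metric, which is the reason the rational homology sphere hypothesis appears in the statement.
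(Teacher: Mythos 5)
Your proposal is correct and follows essentially the same route as the paper: irreducibility via Novikov, the Plante--Candel leafwise hyperbolic metric (which the paper spells out slightly more explicitly, citing Plante to rule out non-negatively curved leaves using the rational homology sphere hypothesis before applying Candel), non-triviality of $\rho_{univ}$ via Remark \ref{rem: rhouniv non-trivial}, the Euler class identification of Proposition \ref{prop:Euler class}, the lifting criterion of Lemma \ref{lem:milnor Euler class}, and finally Theorem \ref{thm:brw}. No gaps.
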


\begin{proof}
First we observe that rational homology $3$-spheres which admit co-orientable taut foliations are irreducible \cite{Nov}. Hence $\pi_1(M)$ will be left-orderable if it admits a homomorphism to ${\rm Homeo}_+(\mathbb{R})$ with non-trivial image (Theorem \ref{thm:brw}). 

Fix a Riemannian metric $g$ on $M$. Since $\mathcal{F}$ is orientable, any leaf of $\mathcal F$ which is not conformally negatively curved with respect to the induced metric gives rise to a nontrivial homology class in $H_2(M; \mathbb{R})$ (\cite[Corollary 6.4]{Pla}), contrary to the fact that $M$ is a rational homology sphere. Consequently, each leaf of $\mathcal{F}$ is conformally hyperbolic. Then by \cite[Theorem 4.1]{Can}, $g$ is conformal to a metric $g'$ whose restriction to each leaf has constant curvature $-1$. Hence there exists a universal circle action $\rho:\pi_1(M)\rightarrow {\rm Homeo}_+(S^1)$ which is non-trivial by Remark \ref{rem: rhouniv non-trivial}. By Proposition \ref{prop:Euler class}, we have $e(E_\rho)=e(T\mathcal{F})$, which is zero by hypothesis. Hence $\rho$ lifts to a non-trivial action of $\pi_1(M)$ on the real line (Lemma \ref{lem:milnor Euler class}) and therefore by Theorem \ref{thm:brw}, the fundamental group $\pi_1(M)$ is left-orderable.
\end{proof}

\section{The left-orderability of the fundamental groups of cyclic branched covers of fibred knots}
\label{sec: fdtc and lo} 

In this section we consider cyclic branched covers of hyperbolic fibred knots and prove Theorem \ref{thm:conjecture fibre knots},  Corollary \ref{cor:universal abelian cover} and Corollary \ref{cor: knots rational homology sphere}. 

Recall that a $3$-manifold is called excellent if it is not an L-space, admits a co-orientable taut foliation and its fundamental group is left-orderable. 

\begin{proposition}\label{prop: e=0 and c>1 implies lo}
 Let $M$ be an oriented rational homology sphere admitting an open book $(S,h)$ with binding a knot $K$ and pseudo-Anosov monodromy $h$. Let $\mathcal{F}_0$ denote the foliation on the exterior of $K$ given by the locally-trivial fibre bundle structure. Suppose that $e(T\mathcal{F}_0) = 0$. 
If $|c(h)| \geq 1$, then $M$ is excellent.
\end{proposition}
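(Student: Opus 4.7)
The plan is to verify all three conditions of Conjecture \ref{conj: lspace} for $M$. After replacing $h$ by $h^{-1}$ if necessary (which reverses the orientation of $M$, sends $c(h)$ to $-c(h)$ by Lemma \ref{lem:poincare translation number}, and preserves the hypothesis $e(T\mathcal{F}_0)=0$), I may assume $c(h)\geq 1$. Theorem \ref{thm:cgeq1} then produces a co-oriented taut foliation $\mathcal{F}$ on $M$ which is transverse to $K$ and is homotopic through oriented $2$-plane fields to the contact structure $\xi$ supported by the open book $(S,h)$. This gives condition $(2)$ of the L-space conjecture, and the implication $(2)\Rightarrow(1)$ cited in the introduction gives that $M$ is not an L-space.

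It remains to prove that $\pi_1(M)$ is left-orderable. By Theorem \ref{prop:taut foliation left orderability}, this reduces to showing that $e(T\mathcal{F})=0$ in $H^2(M)$. Since $\mathcal{F}$ and $\xi$ are homotopic as oriented plane fields, $e(T\mathcal{F}) = e(\xi)$. The binding $K$ is null-homologous in $M$: the fibration $X(K)\to S^1$ sends a meridian of $K$ to a generator of $H_1(S^1)$, so Lemma \ref{lemma: homology of exterior} applies and the restriction map $H^2(M)\to H^2(X(K))$ is an isomorphism. Hence it suffices to verify that $e(\xi)|_{X(K)}=0$.

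For this step I would invoke a Thurston--Winkelnkemper model for the open book contact structure $\xi$: one can arrange the supporting contact $1$-form so that, over the exterior $X(K)$, $\xi$ is a positive deformation of the page plane field $T\mathcal{F}_0$. In particular $\xi|_{X(K)}$ is homotopic through oriented plane fields to $T\mathcal{F}_0$, and therefore $e(\xi)|_{X(K)} = e(T\mathcal{F}_0)=0$ by hypothesis. Combined with the restriction isomorphism this gives $e(\xi)=0=e(T\mathcal{F})$ in $H^2(M)$, and Theorem \ref{prop:taut foliation left orderability} delivers the left-orderability of $\pi_1(M)$, completing the proof that $M$ is excellent.

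The main obstacle is the plane-field homotopy between $\xi|_{X(K)}$ and $T\mathcal{F}_0$ near $\partial X(K)$: the contact condition forces a specific twisting of $\xi$ around the binding, and one must verify that this twist is compatible with a homotopy to $T\mathcal{F}_0$ relative to a page framing of $\partial X(K)$. A cleaner alternative that avoids writing down the homotopy directly is to compute the \emph{relative} Euler numbers of $\xi$ and $T\mathcal{F}_0$ with respect to the page framing of $K$ and check that they agree; this, together with the isomorphism $H^2(M)\cong H^2(X(K))$ and the hypothesis $e(T\mathcal{F}_0)=0$, again forces $e(\xi)=0$.
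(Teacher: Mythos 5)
Your proposal is correct and follows essentially the same route as the paper: reduce to $c(h)\geq 1$, apply Theorem \ref{thm:cgeq1} to get a co-oriented taut foliation whose tangent field is homotopic to the supported contact structure $\xi$, use that $\xi|_{X(K)}$ is homotopic to $T\mathcal{F}_0$ together with the restriction isomorphism $H^2(M)\cong H^2(X(K))$ of Lemma \ref{lemma: homology of exterior} to conclude $e(T\mathcal{F})=e(\xi)=0$, and finish with Theorem \ref{prop:taut foliation left orderability}. The homotopy between $\xi|_{X(K)}$ and $T\mathcal{F}_0$ that you flag as the main obstacle is exactly the fact the paper asserts directly from the open book supporting $\xi$, so your extra care there is fine but not a genuine divergence.
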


\begin{proof}
If $c(h)\leq -1$, we can consider the open book decomposition $(-S, h^{-1})$ of $M$. By Lemma \ref{lem:poincare translation number}, $c(h^{-1})=-c(h)\geq 1$.  Hence, we may assume that $c(h)\geq 1$.  

By Theorem \ref{thm:cgeq1}, $M$ admits a co-oriented taut foliation $\mathcal F$ whose tangent plane field is homotopic to the contact structure $\xi$ supported by $(S,h)$. In particular,  the restriction of $\xi$ to the knot complement $X(K)$ is homotopic to $\mathcal{F}_0$. It follows that the Euler class $e(\xi)$ is sent to $0$ under the inclusion-induced homomorphism $H^2(M) \to H^2(X(K))$. By Lemma \ref{lemma: homology of exterior}, this homomorphism is an isomorphism and hence, $e(T\mathcal{F})=e(\xi)= 0$. This implies that $M$ has a left-orderable fundamental group by Theorem \ref{prop:taut foliation left orderability}. 
\end{proof}

\begin{proof}[Proof of Theorem \ref{thm:conjecture fibre knots}]
Let $K$ be a hyperbolic fibred knot in an oriented integer homology $3$-sphere $M$ with fibre $S$ and monodromy $h$. Then $X_n(K)(\mu_n + q \lambda_n)$ has open book decomposition $(S, T_{\partial}^{-q} h^n)$ with binding the core of the filling solid torus, which we denote by $\widetilde K$. The exterior of $\widetilde K$ in $X_n(K)(\mu_n + q \lambda_n)$ is $X_n(K)$. 

Since $T_\partial^{-q}h^n$ is freely isotopic to $h^n$, there is a fibre-preserving homeomoprhism between the mapping tori of $T_\partial^{-q}h^n$ and $h^n$. Hence the foliation on $X_n(K)$ determined by the open book $(S, h^n)$, denoted by $\widetilde{\mathcal{F}}_0$, is isomorphic to the one determined by the open book $(S,T_\partial^{-q}h^n)$ and the same holds for their tangent plane fields. We show that $e(T\widetilde{\mathcal{F}}_0)=0$.

Since $M$ is an integer homology $3$-sphere, $H^2(X(K)) \cong 0$, so that if $\mathcal{F}_0$ is the foliation of $X(K)$ determined by the open book $(S, h)$, then $e(T\mathcal{F}_0) = 0$. Since the Euler class $e(T\widetilde{\mathcal{F}}_0)$ is the image of $e(\mathcal{F}_0)$ under the homomorphism $H^2(X(K))\to H^2(X_n(K))$, it is also zero.

By Lemma \ref{lem:poincare translation number}, we have  $|c(T_\partial^{-q} h^n)|=|nc(h)-q|\geq 1$. Hence by Proposition \ref{prop: e=0 and c>1 implies lo}, if $X_n(K)(\mu_n + q \lambda_n)$ is a rational homology sphere, it is excellent. Otherwise, the first Betti number of $X_n(K)(\mu_n + q \lambda_n)$ is positive and it is also excellent (\cite{BRW, Ga1}). This proves part (2) of Theorem \ref{thm:conjecture fibre knots}. Part $(1)$ is an immediate consequence of part (2) and Proposition \ref{prop:lower bound FDTC}.
\end{proof}

\begin{proof}[Proof of Corollary \ref{cor:universal abelian cover}]
Assume that $n, q$, and $h$ are given as in the statement of the corollary and that $q \not \in \left\{ 
\begin{array}{cl} 
\{nc(h)\}  & \hbox{ if } nc(h) \in \mathbb Z \\ 
\{\lfloor nc(h) \rfloor, \lfloor nc(h) \rfloor+ 1\}  & \hbox{ if } nc(h) \not  \in \mathbb Z  
\end{array} \right.$. We must show that the $n$-fold cyclic cover of $X(K)(n\mu+q\lambda)$ is excellent. Since this cover is homeomorphic to the Dehn filling $X_n(K)(\mu_n+q\lambda_n)$ and the latter is excellent if $|nc(h)-q|\geq 1$ by Theorem \ref{thm:conjecture fibre knots}(2), we need only verify that this inequality holds.
But $|nc(h)-q|<1$ if and only if either $q = n c(h) \in \mathbb Z$ or $nc(h) \notin \mathbb{Z}$ and $q$ is either 
$\lfloor nc(h) \rfloor$ or $ \lfloor nc(h) \rfloor + 1$. As each of these cases has been excluded, the corollary holds. 
\end{proof}

\begin{proof}[Proof of Corollary \ref{cor: knots rational homology sphere}] 
Under the assumption, the $n$-fold cyclic branched cover of $X(K)(p\mu+q\lambda)$ is homeomorphic to $X_n(K)(\mu_n+mq\lambda_n)$, where $n=mp$ and $(p,q)=1$. The claim therefore follows from Theorem \ref{thm:conjecture fibre knots}(2). 
\end{proof}

\section{The left-orderability of the fundamental groups of cyclic branched covers of closed braids}
\label{sec:cyclic closed braids}
In this section we examine Conjecture \ref{conj: lspace} for cyclic branched covers of hyperbolic closed braids.

\subsection{Open book decomposition of cyclic branched covers of closed braids}
\label{subsec:lift open book}
Given a punctured surface $S$, we let $\bar S$ denote the compact surface obtained from $S$ by filling its punctures. 
If  $f: S\rightarrow S'$ is a proper continuous map between two punctured surfaces, we use $\bar f: \bar S\rightarrow \bar S'$ to denote its continuous extension. 

Given a $m$-braid $b: D_m\rightarrow D_m$, the pair $(\bar{D}_m,\bar{b})$ defines an open book decomposition of  $S^3$ with pages diffeomorphic to the interior of the unit disk $\bar D_m$ and the monodromy $\bar{b} : \bar{D}_m \rightarrow \bar{D}_m$ is the extension of the mapping class $b$ to $\bar{D}_m$. This open book decomposition of $S^3$ lifts to an open book decomposition of the $n$-fold cyclic branched cover of $S^3$ along the closed braid $\hat{b}$, as we describe now.

Let $\mathfrak{p}:\Sm \rightarrow D_m$ be the $n$-fold cyclic cover of the $m$-punctured disk associated with the epimorphism
$\pi_1(D_m; p)\rightarrow \mathbb{Z}/n$ which maps each 
generator $x_i$ to the class of $1$ in $\mathbb{Z}/n$. See Figure \ref{fig:punctured disk generators basepoint}.

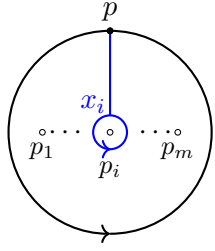
\begin{figure}[ht]
 \centering
 \begin{tikzpicture}[scale=0.75]
  \draw (1.3,2.5) circle (0.05);
  \node [below] at (1.3, 2.5) {\small $p_1$};
  \node at (1.75, 2.5) {$\cdots$};
  \draw (2.5,2.5) circle (0.05); 
  \draw [blue, thick] (2.5,2.5) circle (0.3); 
  \draw [blue, ->, thick] (2.49,2.2) -- (2.51,2.2);
  \draw [blue, thick] (2.5, 4.3) -- (2.5, 2.8);
   \node [below] at (2.5,2.16) {\small $p_i$};
  \node at (3.35, 2.5) {$\cdots$};
  \draw (3.7,2.5) circle (0.05);
  \node [below] at (3.7,2.5) {\small $p_m$};
   \node [blue] at (2.2,3) {$x_i$};
  \filldraw (2.5,4.3) circle (0.05);
  \node [above] at (2.5, 4.3) {$p$};
  \draw [thick, ->] (2.5, 4.3) to  [in=90,out=180] (0.7,2.5) to [in=180,out=270] (2.5,0.7);
  \draw [thick] (2.5,0.7) to [in=270, out=0] (4.3,2.5) to [in=0, out=90] (2.5,4.3);   
  \filldraw (2.5,4.3) circle (0.05);
 \end{tikzpicture}
  \caption{Generators $x_i$ of $\pi_1(D_m)$.}
  \label{fig:punctured disk generators basepoint}
\end{figure}

The automorphism $b_*: \pi_1(D_m;p)\rightarrow \pi_1(D_m;p)$ induced by $b$ sends each generator $x_i$ to a conjugate of a generator $x_j$ for some $j$. Consequently, $b_*$ restricts to an automorphism of the kernel of  $\pi_1(D_m;p)\rightarrow \mathbb{Z}/n$ and therefore $b: D_m\rightarrow D_m$ lifts to a diffeomorphism $\psi: \Sm \rightarrow \Sm$ such that $\psi|_{\partial \Sm}$ is the identity and $b \circ \mathfrak{p} = \mathfrak{p} \circ \psi$. Hence $\bar b \circ \bar{\mathfrak{p}} = \bar{\mathfrak{p}} \circ \bar \psi$ where $\bar{\mathfrak{p}}: \bSm\rightarrow \bar{D}_m$ is an $n$-fold cyclic branched cover of the disk $\bar D_m$ along $m$ points.

It is routine to check that the pair $(\bSm,\bar{\psi})$ defines an open book decomposition of  the $n$-fold cyclic branched cover $\Sigma_n(\hat{b})$ of $S^3$ along the closed braid $\hat{b}$. 

\begin{lemma}
 If $b:D_m\rightarrow D_m$ is a pseudo-Anosov braid, then $\bar{\psi}: \bSm\rightarrow \bSm$ is also pseudo-Anosov.
 \label{lem:lift pseudo Anosov map}
\end{lemma}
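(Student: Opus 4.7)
The plan is to lift a pseudo-Anosov Nielsen--Thurston representative $\varphi$ of $b$ to $\bSm$ and to exhibit its extension as a pseudo-Anosov representative of $\bar{\psi}$. Choose $\varphi: D_m \to D_m$ in the free-isotopy class of $b$, preserving transverse measured singular foliations $(\mathcal{F}^s, \mu^s)$ and $(\mathcal{F}^u, \mu^u)$ with $\varphi_* \mu^s = \lambda^{-1} \mu^s$ and $\varphi_* \mu^u = \lambda \mu^u$ for some $\lambda > 1$. The cover $\mathfrak{p}$ corresponds to the kernel of $\varepsilon: \pi_1(D_m) \to \mathbb{Z}/n$, $x_i \mapsto 1$. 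Since $\varepsilon$ factors through the abelianisation and $\varphi_*$ permutes the conjugacy classes of the $x_i$ (being freely isotopic to $b$), we have $\varepsilon \circ \varphi_* = \varepsilon$, so $\pi_1(S_n)$ is $\varphi_*$-invariant and $\varphi$ lifts to $S_n$ in $n$ ways, differing by deck transformations. A free isotopy from $b$ to $\varphi$ lifts to $S_n$, allowing me to pick the unique lift $\widetilde{\varphi}$ freely isotopic to $\psi$.

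Next I would pull back the foliations by the unbranched cover $\mathfrak{p}$ to obtain transverse measured singular foliations $\widetilde{\mathcal{F}}^{s,u} := \mathfrak{p}^*\mathcal{F}^{s,u}$ on $S_n$; their interior singularities (away from the punctures) are the $\mathfrak{p}$-preimages of those of $\mathcal{F}^{s,u}$, with the same prong counts. The intertwining relation $\varphi \circ \mathfrak{p} = \mathfrak{p} \circ \widetilde{\varphi}$ together with the scaling $\varphi_* \mu^{s,u} = \lambda^{\mp 1} \mu^{s,u}$ forces $\widetilde{\varphi}_* \widetilde{\mu}^{s,u} = \lambda^{\mp 1} \widetilde{\mu}^{s,u}$.

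Finally I would extend across the branch points. Near each preimage $\widetilde{p}_i \in \bSm$ of a puncture $p_i$, the branched cover $\bar{\mathfrak{p}}$ has the local model $z = w^n$, with $p_i$ at $z = 0$ and $\widetilde{p}_i$ at $w = 0$. A $k_i$-prong singularity of $\mathcal{F}^{s,u}$ at $p_i$ is locally modeled by the horizontal/vertical foliation of the quadratic differential $z^{k_i - 2}\,dz^2$, and a direct computation gives the pull-back $n^2 w^{nk_i - 2}\,dw^2$. Hence $\widetilde{\mathcal{F}}^{s,u}$ extend across $\widetilde{p}_i$ as measured singular foliations with an $(nk_i)$-prong singularity (a regular point when $nk_i = 2$); transversality and the scaling of the transverse measures are preserved. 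The extension $\bar{\widetilde{\varphi}}$ fixes each $\widetilde{p}_i$, preserves the extended foliations, and scales their transverse measures by $\lambda^{\mp 1}$, so it is pseudo-Anosov; since $\bar{\widetilde{\varphi}}$ is freely isotopic to $\bar{\psi}$, the conclusion follows.

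The main obstacle is the verification in the last paragraph that the pull-back foliations extend across the branch points to measured singular foliations with the correct singular and transverse structure; everything else reduces to standard covering-space and Nielsen--Thurston considerations.
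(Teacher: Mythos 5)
Your argument is correct and is essentially the paper's own proof: lift the free isotopy from $b$ to its pseudo-Anosov representative to an isotopy of $S_n$ starting at $\psi$, pull back the invariant measured singular foliations to the cover, note that an $k$-pronged puncture lifts to an $nk$-pronged point so the foliations extend over the branched cover $\bar S_n$, and conclude that the extended homeomorphism is pseudo-Anosov and freely isotopic to $\bar\psi$. The only difference is one of detail: where the paper simply asserts (with a figure) that a $1$-pronged puncture lifts to an $n$-pronged one, you justify the extension across the branch points with the local model $z=w^n$ and the pull-back of the quadratic differential $z^{k-2}\,dz^2$, which is a welcome but not essentially different elaboration.
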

\begin{proof}
Let $h_t:D_m\rightarrow D_m$ be a free isotopy from $b$ to its pseudo-Anosov representative, denoted by $\beta$, whose stable and unstable measured singular foliations  are denoted by $(\mathcal{F}^s,\mu^s)$ and $(\mathcal{F}^u,\mu^u)$. The isotopy $h_t$ lifts to an isotopy $H_t:\Sm\rightarrow \Sm$ with 
$H_0=\psi$, where $\psi:\Sm\rightarrow \Sm$ is the lift of the braid $b$ as defined above. 

By construction, $H_t$ leaves each component of $\partial S_n$ invariant. 

Set $\varphi :=H_1: \Sm\rightarrow \Sm$ and note that $\varphi$ is a pseudo-Anosov homeomorphism whose stable and unstable singular foliations are lifts of singular foliations $(\mathcal{F}^s,\mu^s)$ and $(\mathcal{F}^u,\mu^u)$, which we denote by $(\widetilde{\mathcal{F}}^s,\tilde\mu^s)$ and $(\widetilde{\mathcal{F}}^u,\tilde\mu^u)$ respectively. Note that under this lift, any $1$-pronged puncture on $D_m$ is lifted to an $n$-pronged puncture on $\Sm$. 

\medskip
 
 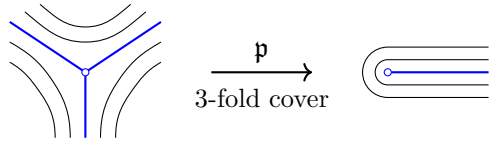
\begin{figure}[ht]
   \centering
   \begin{tikzpicture}[scale=0.67]
    \draw [blue, thick] (3,2.5) -- (1.5, 3.5); 
    \draw [blue, thick] (3,2.5) -- (4.5, 3.5);
    \draw [blue, thick] (3,2.5) -- (3,1.2);
    \filldraw [white] (3, 2.5) circle (0.06);
    \draw [blue] (3, 2.5) circle (0.07);
    \draw (2.7, 1.2) to [out=90, in=305] (2.3,2.4) to [out=125, in=330] (1.5, 3.1);
    \draw (3.3, 1.2) to [out=90, in=235] (3.7,2.4) to [out=55, in=210] (4.5, 3.1);
    \draw (2.4, 1.2) to [out=90, in=305] (2.1,2.1) to [out=125, in=330] (1.5, 2.7);
    \draw (3.6, 1.2) to [out=90, in=235] (3.9,2.1) to [out=55, in=210] (4.5, 2.7);
    \draw (1.8,3.7) to [out=320, in=180] (3, 3.1) to [out=0, in=220] (4.2,3.7);
    \draw (2.1,3.85) to [out=320, in=180] (3, 3.4) to [out=0, in=220] (3.9,3.85);
    \draw [thick,->] (5.5, 2.5) -- (7.5, 2.5);
    \node [above] at (6.5,2.5) {$\mathfrak{p}$};
    \node at (6.5,2) {\small $3$-fold cover}; 
    \draw [thick, blue] (9,2.5) -- (11, 2.5);
    \filldraw [white] (9, 2.5) circle (0.064);
    \draw [blue] (9, 2.5) circle (0.07);
    \draw (11,2.75) to (9,2.75) to [out=180, in=90] (8.75,2.5) to [out=270,in=180] (9, 2.25) to (11,2.25);
    \draw (11,3) to (9,3) to [out=180, in=90] (8.5,2.5) to [out=270,in=180] (9, 2) to (11,2);
   \end{tikzpicture}
    \caption{A $1$-pronged singular point in $D_m$ is covered by an $n$-pronged singular point in $\Sm$}
  \label{fig:singularity}
 \end{figure}

Hence $\widetilde{\mathcal{F}}^s$ and $\widetilde{\mathcal{F}}^u$ extend to a transverse pair of measured singular foliations on the branched cover $\bSm$ which are invariant under  $\bar\varphi: \bSm\rightarrow \bSm$. Moreover, the extension $\bar H_t: \bSm\rightarrow \bSm$ of the isotopy $H_t:\Sm\rightarrow \Sm$ defines a free isotopy between $\bar \psi$ and the pseudo-Anosov homeomorphism $\bar\varphi$.  By definition, this shows that $\bar\psi$ is pseudo-Anosov.
\end{proof}

It is easy to check that the boundary of $\Sm$ has $(m,n)$ components. In particular, $\partial \Sm$ is not connected when $m$ and $n$ are not coprime. On the other hand, since the isotopy $H_t$ in the proof of Lemma \ref{lem:lift pseudo Anosov map} is equivariant with respect to the deck transformations of $\mathfrak{p}: S_n\rightarrow D_m$, the fractional Dehn twist coefficient of $\psi$ with respect to any two boundary components of $S_n$ are equal. We denote this number by $c(\psi)$. Similarly, the fractional Dehn twist coefficients of $\bar\psi$ are equal with respect to all boundary components of the branched cover $\bar{S}_n$, which we denote by $c(\bar\psi)$.

\begin{lemma}
 $c(\bar \psi) = c(\psi) = \frac{(m,n)}{n}c(b)$. 
 \label{lem:FDTC of lifting monodromy}
\end{lemma}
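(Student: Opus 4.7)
The identity $c(\bar\psi) = c(\psi)$ is immediate from the construction in the proof of Lemma \ref{lem:lift pseudo Anosov map}: we have $\partial \bSm = \partial \Sm$, the extended free isotopy $\bar H_t$ restricts to $H_t$ on this common boundary, and the boundary singular points of $\bar\varphi$ coincide with those of $\varphi$. Hence the integer and the fraction $k/q$ entering Definition \ref{def:fractional dehn twist} are the same in the two settings, so both fractional Dehn twist coefficients coincide.

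For $c(\psi) = \frac{(m,n)}{n} c(b)$, I will use the translation-number formula $c(h) = \tau(\iota(h))$ from \S \ref{subsec:FDTC translation number}. First I identify the hyperbolic universal covers $\widetilde{D}_m$ and $\widetilde{S}_n$: since $\pi_1(\Sm) = \ker(\pi_1(D_m) \to \mathbb Z/n)$ is a subgroup of $\pi_1(D_m)$, the composite $\widetilde{S}_n \to \Sm \xrightarrow{\mathfrak p} D_m$ is a connected cover of $D_m$ from a simply-connected space, hence is the universal cover. Fix a boundary component $\widetilde{C}$ of $\partial \widetilde{D}_m$; it also projects to some component $C$ of $\partial \Sm$. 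Because the loop $\partial D_m$ represents $x_1 \cdots x_m$ in $\pi_1(D_m)$ and therefore maps to $m \bmod n$ in $\mathbb Z/n$, the restriction $\mathfrak p|_C : C \to \partial D_m$ has degree $d = n/(m,n)$. Let $\widetilde{b}$ and $\widetilde{\psi}$ denote the lifts to $\widetilde{D}_m = \widetilde{S}_n$ of $b$ and $\psi$, respectively, that fix $\widetilde{C}$ pointwise. Any lift of $\psi$ to $\widetilde{S}_n$ is automatically a lift of $b$ to $\widetilde{D}_m$ (since $\mathfrak p \circ \psi = b \circ \mathfrak p$), so both $\widetilde{b}$ and $\widetilde{\psi}$ are lifts of $b$ fixing $\widetilde{C}$ pointwise. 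As the only deck transformation of $\widetilde{D}_m \to D_m$ fixing $\widetilde{C}$ pointwise is the identity, $\widetilde{\psi} = \widetilde{b}$.

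The embeddings $\iota: {\rm Mod}(D_m) \to \thomeo_+(S^1)$ and $\iota_n: {\rm Mod}(\Sm) \to \thomeo_+(S^1)$ both act on the ideal boundary $(\partial \widetilde{D}_m \cup \partial_\infty \widetilde{D}_m) \setminus \overline{\widetilde{C}}$ of $\widetilde{D}_m$, but with different normalisations of this line as $\mathbb R$: for $\iota$ the lift of $T_{\partial D_m}$ is $\mathrm{sh}(1)$, while for $\iota_n$ the lift of $T_C$ is $\mathrm{sh}(1)$. Since the generator of $\pi_1(C) \subset \pi_1(\Sm) \subset \pi_1(D_m)$ equals $[\partial D_m]^d$, the deck transformation representing $C$ is the $d$-fold iterate of the one representing $\partial D_m$. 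It follows that the two parameterisations of $\mathbb R$ are related by the rescaling $y = d x$. A direct calculation with the definition of translation number shows that under this coordinate change Poincar\'e's $\tau$ is divided by $d$. Applied to $\widetilde{\psi} = \widetilde{b}$, this yields
\begin{displaymath}
c(\psi) \;=\; \tau(\iota_n(\psi)) \;=\; \frac{\tau(\iota(b))}{d} \;=\; \frac{c(b)}{d} \;=\; \frac{(m,n)}{n}\,c(b).
\end{displaymath}

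The main obstacle is the parameterisation comparison in the third paragraph: one must carefully verify that the lifted Dehn twist along $C$ acts on the ideal boundary of $\widetilde D_m$ as the $d$-th iterate of the lifted Dehn twist along $\partial D_m$ (via the deck-translation identification at infinity), and then track how the resulting rescaling of the real line divides translation numbers by $d$.
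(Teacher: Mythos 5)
Your strategy -- computing $c(\psi)$ as a translation number via $c(h)=\tau(\iota(h))$ and comparing the two normalisations of the line at infinity -- is genuinely different from the paper's proof, which lifts the free isotopy $H_t$ directly and computes $c(\psi)$ from Definition \ref{def:fractional dehn twist} by tracking boundary singular points of the stable foliation. Your preliminary steps are fine: $c(\bar\psi)=c(\psi)$, the identification of $\widetilde{S}_n$ with $\widetilde{D}_m$, the degree $d=n/(m,n)$ of $\mathfrak p|_C$, and the equality $\widetilde\psi=\widetilde b$ of the two lifts fixing $\widetilde C$ pointwise. The gap is in the third paragraph, and it is exactly the step you yourself flag as unverified. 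The parameterisation of the line $L=(\partial\widetilde D_m\cup\partial_\infty\widetilde D_m)\setminus\overline{\widetilde C}$ is pinned down in \S \ref{subsec:FDTC translation number} by requiring the \emph{lifted boundary Dehn twist} to be ${\rm sh}(1)$, not the peripheral deck transformation; so the relation $[C]=[\partial D_m]^d$ in $\pi_1(D_m)$ does not by itself give the rescaling. What you need is $\iota_n(T_{C})=\iota(T_{\partial D_m})^d$ as homeomorphisms of $L$. When $\gcd(m,n)=1$ this is true, because $T_{\partial D_m}^{\,n}$ lifts through $\mathfrak p$ to the single boundary twist of $S_n$ (identity on $\partial S_n$), and uniqueness of lifts fixing $\widetilde C$ pointwise gives the identity. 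But when $\gcd(m,n)>1$ the statement you propose to verify is false: the boundary-trivial lift of $T_{\partial D_m}^{\,d}$ is the product of the Dehn twists about \emph{all} $(m,n)$ boundary components of $S_n$, so $\iota(T_{\partial D_m})^d=\iota_n(T_{C})\circ\iota_n\bigl(\prod_{i\neq i_0}T_{C_i}\bigr)$, and the second factor is not the identity on $L$ (it translates points of the lifts of the other boundary components, although it fixes many ideal points). Hence the two normalisations are not related by an affine change of coordinate, and the division of $\tau$ by $d$ does not follow as stated.

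The argument can be repaired, but it needs material you have not supplied: all of $u=\iota(T_{\partial D_m})$, $v=\iota_n(T_{C})$, $w=\iota_n\bigl(\prod_{i\neq i_0}T_{C_i}\bigr)$ and $F=\iota_n(\psi)=\iota(b)$ pairwise commute (boundary twists are central in ${\rm Mod}(S_n)$ and $T_{\partial D_m}=\Delta_m^2$ is central in $B_m$); a lift of $T_{C_{i'}}$, $i'\neq i_0$, fixing $\widetilde C$ pointwise fixes ideal points and so has translation number $0$; $\tau$ is additive on this abelian subgroup, so $u^d=vw$ gives $\tau_u(v)=d$; and finally, for commuting $u,v,F$ with $u,v$ fixed-point free and $\tau_u(v)=d$, one shows $\tau_v(F)=\tau_u(F)/d$ using the estimate $|f(x)-x-\tau_u(f)|\leq 1$ for $f$ commuting with ${\rm sh}(1)$. (Also, the rescaling should be $y=x/d$, not $y=dx$, though you state the resulting division by $d$ correctly.) As written -- with the key comparison left unverified and formulated in a way that fails for $\gcd(m,n)>1$ -- the proof is incomplete, whereas the paper's path-lifting computation handles both cases directly.
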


\begin{proof}
We continue to use the notation developed in the proof of Lemma \ref{lem:lift pseudo Anosov map}. 

First of all, $c(\bar\psi)=c(\psi)$ since the two isotopies $H_t$ and $\bar H_t$ are identical over the collar neighborhoods of $\partial\Sm$ and $\partial \bSm$. It remains to show that $c(\psi)=\frac{(m,n)}{n}c(b)$. 

Assume first that  $(m,n)=1$. In this case, $\partial \Sm$ is connected, and we denote it by $C$, so the restriction $\mathfrak{p}|_C: C \rightarrow \partial D_m$ is an $n$-fold cyclic cover. The proof that $c(\psi)=\frac{(m,n)}{n}c(b)$ is essentially contained in Figure \ref{fig:fractional Dehn twist lifting map}. To write it down more precisely, we need some notation.

Let $\{p_0,\cdots, p_{N-1}\}$ be the set of singular points  on $\partial D_m$ of the stable foliation $\mathcal{F}^s$. Fix a preimage $q_0$ of $p_0$ on $C=\partial S_n$. For $k=0, \cdots, n-1$, let $q^k_i \in C$ be the $k^{th}$ lift of the singular point $p_i$. That is, if $\gamma_{p_0p_k}$ is the subarc of $\partial D_m$ with endpoints $p_0$ and $p_k$ (cf. \S \ref{subsec:FDTC isotopy}), then $q^k_i$ is the end point of the unique lift of the path $C^k\cdot \gamma_{p_0p_k}$ starting at $q_0$. To simplify notation, we denote $q^0_i$ by $q_i$. In particular, $q_0^0=q_0$.  Note that $\{q^k_i\}$ is the set of singular points on $\partial \Sm=C$ of the stable foliation $\widetilde{\mathcal{F}}^s$ of $\varphi$. 
\begin{figure}[ht]
\centering
\begin{tikzpicture}[scale=0.9]
\node at (3.5, 0) {\footnotesize $\partial \Sm\times [0,1]$};
\node at (10.5, 0) {\footnotesize $\partial D_m\times [0,1]$};
 \draw [thick, ->] (6, 2.5) -- (8,2.5);
 \node [above] at (7, 2.5) {\footnotesize $\mathfrak{p}|_C$ is a};
 \node [below] at (7,2.5) {\footnotesize $3$-fold cover};
 \filldraw [light-gray] (3.5,2.5) circle (1.5);  
 \filldraw [white] (3.5,2.5) circle (0.8);
 \draw (3.5,2.5) circle (1.5);  
 \draw  (3.5,2.5) circle (0.8);
  \filldraw [light-gray] (10.5,2.5) circle (1.5);  
 \filldraw [white] (10.5,2.5) circle (0.8);
 \draw (10.5,2.5) circle (1.5);  
 \draw  (10.5,2.5) circle (0.8);
 \begin{scope}[decoration={
    markings,
    mark=at position 0.5 with {\arrow{>}}}
    ] 
 \draw [blue,postaction={decorate}] (10.5, 1.7) to [out=-10, in=270] (11.45, 2.5) to [out=90,in=0] (10.5, 3.5) to [out=180, in=90] (9.5,2.5) to [out=270, in=180] (10.5, 1.4) to [out=0, in=250] (12, 2.5);
 \node at (11.6, 3.2) {\color{blue} \tiny $h_t|_{p_0}$};
 \draw [blue, postaction={decorate}] (3.52, 1.67) to [out=-30, in=230] (4.5,2) to [out=50, in=280] (4.7,3) to [out=100, in=310] (4.25, 3.8);
 \node at (4, 1.4) {\color{blue}\tiny $H_t|_{q_0}$};
 \end{scope}
 \foreach \a in {0}
 {
\draw  [yshift=2.5cm, xshift=3.5cm]  (\a*360/4-90: 0.6) node {\tiny $q_{\a}$};
 \filldraw  [yshift=2.5cm, xshift=3.5cm]  (\a*360/4-90: 0.8) circle (0.03);
 }
 \foreach \a in {0,1,2,3}
 {
\draw  [yshift=2.5cm, xshift=10.5cm]  (\a*360/4-90: 1.75) node {\tiny $p_{\a}$};
 \filldraw  [yshift=2.5cm, xshift=10.5cm]  (\a*360/4-90: 1.5) circle (0.03);
 }

 \foreach \a in {0}
 {
\draw  [yshift=2.5cm, xshift=10.5cm]  (\a*360/4-90: 0.6) node {\tiny $p_{\a}$};
 \filldraw  [yshift=2.5cm, xshift=10.5cm]  (\a*360/4-90: 0.8) circle (0.03);
 }

  \foreach \a in {0,1,2,3}
 {
\draw  [yshift=2.5cm, xshift=3.5cm]  (\a*120/4-90: 1.75) node {\tiny $q_{\a}$};
 \filldraw  [yshift=2.5cm, xshift=3.5cm]  (\a*120/4-90: 1.5) circle (0.03);
 }
   \foreach \a in {0,1,2,3}
 {
\draw  [yshift=2.5cm, xshift=3.5cm]  (\a*120/4+30: 1.75) node {\tiny $q^1_{\a}$};
 \filldraw  [yshift=2.5cm, xshift=3.5cm]  (\a*120/4+30: 1.5) circle (0.03);
 }
   \foreach \a in {0,1,2,3}
 {
\draw  [yshift=2.5cm, xshift=3.5cm]  (\a*120/4+150: 1.75) node {\tiny $q^2_{\a}$};
 \filldraw  [yshift=2.5cm, xshift=3.5cm]  (\a*120/4+150: 1.5) circle (0.03);
 }
 \draw [->] (4.3,2.45)--(4.3,2.5);
 \draw [->] (11.3,2.45)--(11.3,2.5);
 \end{tikzpicture}
\caption{In the figure, $\{q^k_0, \cdots, q^k_3\}_{k=0,1,2}$ and $\{p_0, \cdots, p_3\}$ are singular points of the stable foliations $\widetilde{\mathcal{F}}^s$ and  $\mathcal{F}^s$ respectively. The path $H_t|_{q_0}$ is the unique lift of the path $h_t|_{p_0}$ starting at $q_0$. By Definition \ref{def:fractional dehn twist}, we have $c(b)=1+\frac{1}{4}=\frac{5}{4}$ and  $c(\psi)=\frac{5}{12}=\frac{1}{3}c(b)$.}
\label{fig:fractional Dehn twist lifting map}
\end{figure}
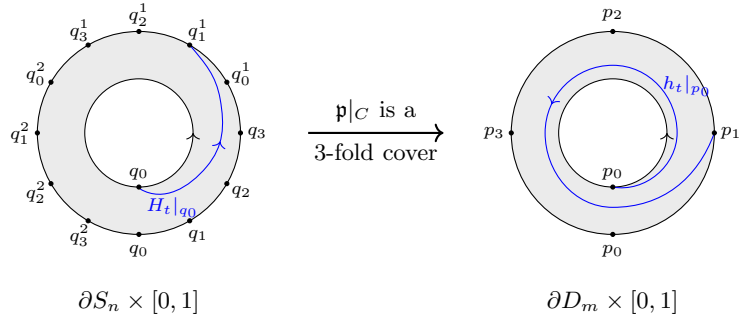

Let $c(b)=l+\frac{a}{N}$. By Definition \ref{def:fractional dehn twist}, this means that the path
$h_t|_{p_0}$ is homotopic to the path $[\partial D_m]^l\cdot\gamma_{p_0p_a}$ on $\partial D_m$. 

We write $l=ns+r$ where $0\leq r<n$. Then by the uniqueness of path lifting, $H_t|_{q_0}$ is homotopic to the path $C^s\cdot \gamma_{q_0q^r_0}\cdot \gamma_{q^r_0q^r_k}$, which is equal to $C^s\cdot\gamma_{q_0q^r_k}$. Therefore,
\begin{align*}
 c(\varphi) & =s+\frac{rN+a}{Nn} \\
 & = \frac{N(sn+r)+a}{Nn}=\frac{Nl+k}{Nn}\\
 & = \frac{1}{n}c(b). 
\end{align*}
This deals with the case that $(m, n) = 1$. 

In case that $(m,n)\neq 1$, the degree of the covering map $\mathfrak{p}: S_n\rightarrow D_m$ restricted to each boundary component of $S_n$ is $\frac{n}{(m,n)}$. Proceed as in the case that $(m,n) = 1$ to complete the proof. 
\end{proof}

\subsection{ The L-space conjecture and cyclic branched covers of closed braids}

In this section we study the left-orderability of branched covers of closed braids. We begin with the proof of Theorem \ref{thm:conjecture cyclic braids}: {\it Let $b\in B_{m}$ be an odd-strand braid whose closure $\hat{b}$ is an oriented hyperbolic link $L$ and let $c(b) \in \mathbb Q$ be the fractional Dehn twist coefficient of $b$. Suppose that $|c(b)| \geq 2$. Then all even order cyclic branched covers of $\hat{b}$ are excellent.}

\begin{proof}[Proof of Theorem \ref{thm:conjecture cyclic braids}] 

First of all, the equivariant sphere theorem (\cite{MSY}) and the positive solution of the Smith Conjecture (\cite{MB}) imply that as $\hat{b}$ is prime, all cyclic branched covers of $\hat{b}$ are irreducible. 

For each $n \geq 1$, there is an $n$-fold cyclic branched cover $\mathfrak{p}_n: \Sigma_{2n}(\hat{b}) \to \Sigma_{2}(\hat{b})$, branched over the lift $\tilde L$ of $\hat b$ to $\Sigma_{2n}(\hat{b})$, which the reader will verify is surjective on the level of fundamental groups. Hence if $b_1(\Sigma_2(\hat{b}))>0$, then $b_1(\Sigma_{2n}(\hat{b}))>0$ for all $n$. As such, Conjecture \ref{conj: lspace} holds for all even order cyclic branched covers of $\hat{b}$. 

Suppose then that $\Sigma_2(\hat{b})$ is a rational homology sphere. If $c(b)\leq -2$, then $c(b^{-1})=-c(b)\geq 2$ by Lemma \ref{lem:poincare translation number}. Since $\widehat{b^{-1}}$ is the mirror image of $\hat{b}$, their cyclic branched covers are diffeomorphic, so without loss of generality we may assume that $c(b)\geq 2$. 

By hypothesis, $b$ is a pseudo-Anosov mapping class of $D_{m}$. Then Lemma \ref{lem:lift pseudo Anosov map} shows that the $2$-fold cyclic branched cover of $\hat{b}$ admits an open book decomposition $(\bar{S}_2,\bar\psi)$, where $\bar{S}_2$ is the $2$-fold cyclic branched cover of the unit disk branched  over $m$ points and the monodromy $\bar\psi$ is pseudo-Anosov. By Lemma \ref{lem:FDTC of lifting monodromy},
\begin{displaymath}
 c(\bar{\psi})=\frac{c(b)}{2}\geq 1. 
\end{displaymath}
 Since $b$ is a braid on an odd number of strands, $\partial \bar{S}_2$ is connected.  Then by Theorem \ref{thm:cgeq1}, there exists a co-orientable taut foliation $\mathcal{F}$ on $\Sigma_2(\hat{b})$ and hence it cannot be an L-space \cite{OS1,Bn,KR2}. Moreover, Theorem \ref{thm:cgeq1} says that the tangent plane field of the foliation $\mathcal{F}$ is homotopic to the contact structure supported by the open book $(\bar{S}_2,\bar\psi)$. On the other hand, this contact structure is isotopic to the lift of the contact structure on $S^3$ that is supported by the open book $(\bar{D}_{m},\bar{b})$. (Here ``lift" is used in the sense of \S \ref{subsec: contact}.) Therefore, by Lemma \ref{lem:Euler class vanishes links} we have $e(T\mathcal{F})=0$. Applying Theorem \ref{prop:taut foliation left orderability}, we conclude that $\pi_1(\Sigma_2(\hat{b}))$ is left-orderable. This complete the proof for the $2$-fold cyclic branched cover.
 
Now consider $\Sigma_{2n}(\hat{b})$ where $n > 1$ and recall that there is an $n$-fold branched cyclic cover $\mathfrak{p}_n: \Sigma_{2n}(\hat{b}) \to \Sigma_{2}(\hat{b})$, branched over $\tilde L$. We noted above that $\mathfrak{p}_n$ is surjective on the level of $\pi_1$, so $\pi_1(\Sigma_{2k}(\hat{b}))$ is left-orderable by Theorem \ref{thm:brw}. Finally, since $\tilde{L}$ intersects the foliation $\mathcal{F}$ on $\Sigma_2(\hat b)$ transversely (cf. \cite[Lemma 4.4]{HKM2}), $\mathcal{F}$ lifts to a foliation $\mathcal{F}_n$ on $\Sigma_{2n}(\hat{b})$ which is easily seen to be co-oriented and taut. Consequently, $\Sigma_{2n}(\hat{b})$ is not an L-space for any $n > 1$, which completes the proof. 
\end{proof}

An identical argument yields the following more general statement. We omit the proof.  

\begin{theorem}
 Let $b\in B_m$ be a psudo-Anosov braid whose fractional Dehn twist coefficient satisfies $|c(b)|\geq N$. Then the $nk$-fold cyclic branched cover of the closed braid $\hat{b}$ admits a co-oriented taut foliation and has a left-orderable fundamental group for any $n$ with $2\leq n\leq N$, $(m,n)=1$ and $k\geq1$. 
 \qed
\label{thm:taut foliation in cyclic covers of a braid}
\end{theorem}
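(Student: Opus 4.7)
The argument will be a direct generalization of the proof of Theorem~\ref{thm:conjecture cyclic braids}, with the role of the $2$-fold branched cover played by the $n$-fold branched cover for any admissible $n$ in the range $2 \le n \le N$. The plan is to first establish the conclusion for $\Sigma_n(\hat b)$ and then bootstrap up to $\Sigma_{nk}(\hat b)$ using the $k$-fold branched cover $\mathfrak p_k : \Sigma_{nk}(\hat b) \to \Sigma_n(\hat b)$.

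First I would dispose of the positive Betti number case: if $b_1(\Sigma_n(\hat b)) > 0$, then since $\mathfrak p_k$ is surjective on $H_1$, we get $b_1(\Sigma_{nk}(\hat b)) > 0$ as well, and the conclusion follows from \cite{BRW, Ga1}. So assume $\Sigma_n(\hat b)$ is a rational homology sphere. After replacing $b$ by $b^{-1}$ (which passes to the mirror image of $\hat b$, producing diffeomorphic branched covers and negating $c(b)$ via Lemma~\ref{lem:poincare translation number}), we may assume $c(b) \geq N$.

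The hypothesis $(m,n) = 1$ is essential because it guarantees that $\partial \bar S_n$ is connected, which is exactly what is needed to apply the Honda--Kazez--Matic criterion, Theorem~\ref{thm:cgeq1}. By Lemma~\ref{lem:lift pseudo Anosov map}, $\Sigma_n(\hat b)$ carries an open book decomposition $(\bar S_n, \bar\psi)$ with $\bar\psi$ pseudo-Anosov, and by Lemma~\ref{lem:FDTC of lifting monodromy},
\[
c(\bar\psi) \;=\; \frac{(m,n)}{n}\,c(b) \;=\; \frac{c(b)}{n} \;\geq\; \frac{N}{n} \;\geq\; 1.
\]
Theorem~\ref{thm:cgeq1} then provides a co-orientable taut foliation $\mathcal F$ on $\Sigma_n(\hat b)$ whose tangent plane field is homotopic to the contact structure $\tilde\xi$ supported by $(\bar S_n, \bar\psi)$. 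Now $\tilde\xi$ is the lift (in the sense of \S\ref{subsec: contact}) to $\Sigma_n(\hat b)$ of the contact structure $\xi$ on $S^3$ supported by $(\bar D_m, \bar b)$, and $e(\xi) = 0$ trivially since $H^2(S^3) = 0$. Consequently Lemma~\ref{lem:Euler class vanishes links} yields $e(T\mathcal F) = e(\tilde\xi) = 0$, and Theorem~\ref{prop:taut foliation left orderability} delivers the left-orderability of $\pi_1(\Sigma_n(\hat b))$.

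Finally, to promote these conclusions to $\Sigma_{nk}(\hat b)$ for arbitrary $k \geq 1$: the branched binding $\widetilde L \subset \Sigma_n(\hat b)$ is transverse to $\mathcal F$ by \cite[Lemma 4.4]{HKM2}, so $\mathcal F$ lifts through $\mathfrak p_k$ to a co-oriented taut foliation $\mathcal F_k$ on $\Sigma_{nk}(\hat b)$; in particular $\Sigma_{nk}(\hat b)$ is not an L-space. For left-orderability, the branched covering projection $\mathfrak p_k$ is surjective on fundamental groups, so $\pi_1(\Sigma_{nk}(\hat b))$ surjects onto the left-orderable group $\pi_1(\Sigma_n(\hat b))$ and is therefore itself left-orderable by Theorem~\ref{thm:brw}. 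The main obstacle in carrying this out is simply confirming that each of the inputs (Lemma~\ref{lem:lift pseudo Anosov map}, Lemma~\ref{lem:FDTC of lifting monodromy}, Theorem~\ref{thm:cgeq1}, Lemma~\ref{lem:Euler class vanishes links}, Theorem~\ref{prop:taut foliation left orderability}) applies once the coprimality $(m,n) = 1$ and inequality $n \leq N$ are in hand; no new ideas are required beyond those used for the $n = 2$ case.
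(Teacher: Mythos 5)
Your proposal is correct and is essentially the argument the paper intends: the paper omits the proof of this theorem, stating that it follows by an argument identical to that of Theorem \ref{thm:conjecture cyclic braids}, and that is exactly what you have reproduced, with $2$ replaced by $n$, the coprimality $(m,n)=1$ supplying connectedness of $\partial \bar S_n$ for Theorem \ref{thm:cgeq1}, and the passage to $\Sigma_{nk}(\hat b)$ handled by lifting the foliation and using $\pi_1$-surjectivity of the branched cover. The only detail left implicit is the irreducibility of the branched covers (needed in the $b_1>0$ case, and obtained as in the paper's model proof from primeness of the hyperbolic link $\hat b$ via the equivariant sphere theorem and the Smith conjecture), a point the proof of Theorem \ref{thm:conjecture cyclic braids} records at the outset.
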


\subsection{Dehornoy's braid ordering and  cyclic branched covers of closed braids. }
\label{subsec:order of the braid group}

There is a special left order $<_D$ on the braid group $B_m$, due to Dehornoy, characterised by the condition that a braid $b$ is positive if and only if there is a $j \geq 1$ such that $b$ can be written in the standard braid generators (Figure \ref{fig:fig34}) as a word containing $\sigma_j$, but no $\sigma_j^{-1}$, and not containing $\sigma_i^{\pm 1}$ for $i < j$.  Set
$\Delta_m = (\sigma_1 \sigma_2 \ldots \sigma_{m-1})(\sigma_1 \sigma_2 \ldots \sigma_{m-2}) \ldots (\sigma_1 \sigma_2)(\sigma_1)$ in $B_m$. 
The centre of $B_m$ is generated by $\Delta_m^2$ and for each $b \in B_m$ there is an integer $d > 0$ such that 
$$\Delta_m^{-2d} <_D b <_D \Delta_m^{2d}$$
In other words, the subgroup of $B_m$ generated by $\Delta_m$ is cofinal in $B_m$ with respect to $<_D$. 

\begin{definition}
  Given an element $b\in B_m$, the Dehornoy floor $\lfloor b \rfloor_D$ is the nonnegative integer defined to be
  \begin{displaymath}
   \lfloor b\rfloor_D=\min\{k\in \mathbb{Z}_{\geq 0}\mid \Delta_m^{-2k-2}<_D b <_D \Delta_m^{2k+2}\}.
\end{displaymath}   
\label{def:Dehornoy floor}
 \end{definition}
\vspace{-.2cm} 

Although the Dehornoy's floor fails to be a topological invariant of the closed braid, it has proven to be a useful concept when studying links via closed braids \cite{Ito1,Ito2}. 

Malyutin discovered a fundamental relationship between $c(b)$ and $\lfloor b\rfloor_D$. Though the Dehornoy's floor defined by Malyutin \cite[Definition 7.3]{Mal} is slightly different from the one given above, it is easy to check that these two agree for $b>_D 1$ in $B_m$. 

\begin{proposition}{\rm (cf. \cite[Lemma 7.4]{Mal})} \label{prop: malyutin} 
For each $b \in B_m$, $\lfloor b\rfloor_D \leq |c(b)| \leq \lfloor b\rfloor_D + 1$. 
\label{prop:Dehornoy floor FDTC}
\end{proposition}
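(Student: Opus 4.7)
The plan is to leverage the characterisation $c(b) = \tau(\iota(b))$ from \S\ref{subsec:FDTC translation number}, where $\iota: B_m \cong {\rm Mod}(D_m) \hookrightarrow \thomeo_+(S^1)$ sends the boundary Dehn twist $T_{\partial D_m}$ to the unit shift ${\rm sh}(1)$. Since $\Delta_m^2$ is a representative of $T_{\partial D_m}$, Lemma \ref{lem:poincare translation number} immediately gives $c(\Delta_m^{2n}) = n$ for every $n \in \mathbb{Z}$.

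The crux of the argument is the following monotonicity claim: for every $b \in B_m$ and $n \in \mathbb{Z}$,
\begin{equation*}
\Delta_m^{2n} \leq_D b \;\Longrightarrow\; c(b) \geq n, \qquad b \leq_D \Delta_m^{2n} \;\Longrightarrow\; c(b) \leq n.
\end{equation*}
I would first reduce to the case $n = 0$ using the centrality of $\Delta_m^2$: since $\iota$ sends $\Delta_m^{2n}$ to the central shift ${\rm sh}(n)$, the relation $\Delta_m^{2n} \leq_D b$ is equivalent to $1 \leq_D \Delta_m^{-2n}b$, and the sought inequality $c(b) \geq n$ becomes $\tau(\iota(\Delta_m^{-2n}b)) \geq 0$. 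It then suffices to prove that every $\sigma$-positive braid $a$ satisfies $\tau(\iota(a)) \geq 0$. For this I would use Dehornoy's combinatorial description of positivity (a $\sigma$-positive representative contains $\sigma_j$ but no $\sigma_j^{-1}$ and no $\sigma_i^{\pm 1}$ with $i<j$) to identify a basepoint $x_0 \in \mathbb{R}$, chosen in the Nielsen-action model so that it sits over a puncture on $\partial D_m$ that is shifted non-negatively by the designated letter $\sigma_j$, at which every $\sigma$-positive braid acts with non-negative displacement under $\iota$. Iterating and applying the definition $\tau(\tilde f) = \lim_{k\to\infty}(\tilde f^k(x_0) - x_0)/k$ then yields $\tau(\iota(a)) \geq 0$; the opposite direction follows by reversing orientations.

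Granting this claim, both bounds of the proposition are immediate. The defining inequalities $\Delta_m^{-2\lfloor b\rfloor_D - 2} <_D b <_D \Delta_m^{2\lfloor b\rfloor_D + 2}$ yield at once $-(\lfloor b\rfloor_D + 1) \leq c(b) \leq \lfloor b\rfloor_D + 1$, and hence $|c(b)| \leq \lfloor b \rfloor_D + 1$. For the lower bound, when $\lfloor b\rfloor_D = k \geq 1$, minimality of $k$ rules out $\Delta_m^{-2k} <_D b <_D \Delta_m^{2k}$, forcing either $b \geq_D \Delta_m^{2k}$ or $b \leq_D \Delta_m^{-2k}$; the claim then gives $c(b) \geq k$ or $c(b) \leq -k$, whence $|c(b)| \geq k$.

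The principal obstacle is the monotonicity claim itself: Dehornoy's order on $B_m$ is not globally the pullback under $\iota$ of the pointwise order on $\thomeo_+(S^1)$, and extracting the compatibility that produces the basepoint $x_0$ above requires a precise combinatorial-geometric argument tying Dehornoy's Property S to the Nielsen action on $\partial_\infty \tilde{D}_m$. An alternative is to invoke Malyutin's original translation-number analysis of $<_D$ directly.
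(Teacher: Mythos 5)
Your reduction of the proposition to the monotonicity claim is sound: granting that $\Delta_m^{2n}\leq_D b$ forces $c(b)\geq n$ and $b\leq_D\Delta_m^{2n}$ forces $c(b)\leq n$, both the upper bound $|c(b)|\leq \lfloor b\rfloor_D+1$ and the lower bound $|c(b)|\geq\lfloor b\rfloor_D$ follow exactly as you say, and your use of the centrality of $\Delta_m^2$ together with the additivity of $c$ on commuting elements (Lemma \ref{lem:poincare translation number}) to normalise to $n=0$ is correct. This is the same shape as the paper's argument, which however does not reprove the key input: it quotes Malyutin's Lemma 7.4 of \cite{Mal} verbatim for the case $\Delta_m^{2d}\leq_D b<_D\Delta_m^{2d+2}$ and disposes of the case $\Delta_m^{-2d-2}<_D b\leq_D\Delta_m^{-2d}$ by passing to $b^{-1}$ (again via centrality of $\Delta_m^2$) and using $c(b^{-1})=-c(b)$.

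The genuine gap is in the only substantive step of your plan, the claim that every $\sigma$-positive braid $a$ satisfies $\tau(\iota(a))\geq 0$. As written, the basepoint argument does not go through: the distinguished index $j$ in Dehornoy's positivity condition varies with the braid, so no single point $x_0$ ``sitting over a puncture shifted non-negatively by $\sigma_j$'' has been exhibited, and for $j\geq 2$ a $\sigma_j$-positive word fixes large arcs of the circle at infinity, so non-negative displacement at a naively chosen point is exactly what has to be proved. The statement that Dehornoy positivity is compatible with the Nielsen-type action on $\partial_\infty\widetilde{D}_m$ in the way you need (i.e.\ that $<_D$ is a Nielsen--Thurston ordering whose most significant basepoint is moved non-negatively by every Dehornoy-positive braid) is a theorem of Short--Wiest, and is essentially the content of Malyutin's own analysis; it is not a routine consequence of the definition of $<_D$. (You also tacitly use that powers of Dehornoy-positive braids are Dehornoy-positive in the iteration step; this is true since the positive cone of a left order is a subsemigroup, but should be said.) So either you import Short--Wiest/Malyutin at this point --- in which case your ``alternative'' collapses to the paper's proof, namely citing \cite[Lemma 7.4]{Mal} plus the $b\mapsto b^{-1}$ symmetry --- or you owe a genuine proof of the compatibility, which the sketch does not supply.
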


 \begin{proof}
 Let $d= \lfloor b\rfloor_D$. 
 
 By Definition \ref{def:Dehornoy floor}, this means either $\Delta_m^{2d} \leq_D b <_D \Delta_m^{2d+2}$ or $\Delta_m^{-2d-2 } <_D b \leq_D \Delta_m^{-2d}$. 
 In the first case the claim follows directly from \cite[Lemma 7.4]{Mal}. In the second, the fact that $\Delta_m^2$ lies in the center of $B_m$ implies that $\Delta_m^{2d} \leq_D b^{-1} <_D \Delta_m^{2d + 2}$, so by the first case,  $\lfloor b\rfloor_D \leq_D c(b^{-1})\leq_D \lfloor b \rfloor_D+1$. Since $c(b^{-1})=-c(b)=|c(b)|$, this completes the proof.
 \end{proof}
 
 \begin{remark}\label{rmk:FDTC braid ordering}
  The proof of Proposition \ref{prop:Dehornoy floor FDTC} shows that $c(b) \geq 0$ when $b>_D 1$ and $c(b) \leq 0$ when $b<_D 1$. 
 \end{remark}

Given Proposition \ref{prop:Dehornoy floor FDTC}, Theorem \ref{thm:conjecture cyclic braids} has the following consequence. 

\begin{theorem}
 Let $b\in B_{m}$ be an odd-strand pseudo-Anosov braid. Suppose that $\lfloor b \rfloor_D>1$. Then all even order cyclic branched covers of $\hat{b}$ admit co-oriented taut foliations and have left-orderable fundamental groups. 
 \qed
 \label{thm:conjecture cyclic braids braid ordering}
\end{theorem}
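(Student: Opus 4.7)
The plan is to deduce this theorem as a direct corollary of Theorem \ref{thm:conjecture cyclic braids} by using Proposition \ref{prop:Dehornoy floor FDTC} as the bridge between the Dehornoy-floor hypothesis and the fractional-Dehn-twist-coefficient hypothesis. Since $\lfloor b\rfloor_D$ is by definition a non-negative integer, the inequality $\lfloor b\rfloor_D > 1$ is equivalent to $\lfloor b\rfloor_D \geq 2$. The left-hand inequality of Proposition \ref{prop:Dehornoy floor FDTC}, namely $\lfloor b\rfloor_D \leq |c(b)|$, then immediately upgrades this to $|c(b)| \geq 2$.

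Once this bound on $|c(b)|$ is in hand, I would invoke Theorem \ref{thm:conjecture cyclic braids} verbatim: $b$ is by hypothesis an odd-strand pseudo-Anosov braid, and we have just verified $|c(b)|\geq 2$, so every even order cyclic branched cover of $\hat b$ is excellent. Being excellent entails conditions (2) and (3) of Conjecture \ref{conj: lspace}, so these covers admit co-oriented taut foliations and have left-orderable fundamental groups, as required. (In fact the conclusion stated in Theorem \ref{thm:conjecture cyclic braids braid ordering} is strictly weaker than excellence, which additionally gives the non-L-space conclusion; I would simply record this stronger statement as implicit in the argument.)

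There is essentially no obstacle to surmount: the theorem is a reformulation of Theorem \ref{thm:conjecture cyclic braids} in which the analytic invariant $c(b)$ is replaced by the combinatorially computable integer $\lfloor b\rfloor_D$, the translation between the two being precisely the content of Malyutin's Proposition \ref{prop:Dehornoy floor FDTC}. One need not separately verify hyperbolicity of $\hat b$ from the $\lfloor b\rfloor_D > 1$ hypothesis either, since the Ito–Kawamuro criterion mentioned in the remark following Theorem \ref{thm:conjecture cyclic braids} already handles this via $|c(b)|\geq 2$. The only point requiring mild care is noting that the two conventions for Dehornoy's floor (Malyutin's and the one used in Definition \ref{def:Dehornoy floor}) agree for $b >_D 1$, which has been recorded just before Proposition \ref{prop:Dehornoy floor FDTC}; for $b <_D 1$ one passes to $b^{-1}$, whose Dehornoy floor and $|c(\cdot)|$ coincide with those of $b$, exactly as done in the proof of Proposition \ref{prop:Dehornoy floor FDTC}.
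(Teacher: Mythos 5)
Your proposal is correct and is essentially the paper's own (implicitly one-line) argument: since $\lfloor b\rfloor_D$ is a non-negative integer, $\lfloor b\rfloor_D>1$ gives $\lfloor b\rfloor_D\geq 2$, so Proposition \ref{prop:Dehornoy floor FDTC} yields $|c(b)|\geq 2$ and Theorem \ref{thm:conjecture cyclic braids} applies, with excellence giving the stated taut foliation and left-orderability conclusions. Your side remarks (hyperbolicity via Ito--Kawamuro and the agreement of the two Dehornoy-floor conventions) are consistent with what the paper records.
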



\section{The L-space conjecture and genus one open books}
\label{sec:L-space conjecture genus one open books}
The goal of this section is to prove Theorem \ref{thm:lspace genus one open book decomposition} and Corollary \ref{cor: branched cover genus 1}.

A simple Euler characteristic calculation shows that the $2$-fold branched cover of a disk branched over three points is a genus $1$ surface with one boundary component (see Figure \ref{fig:double cover 3-punctured disk}). We denote this surface by $T_1$ and let $\theta$ be its covering involution.

\begin{figure}
\centering
 \begin{tikzpicture}
   \draw [thick,->] (11.6,2.5) [out=90,in=0] to (10.5,3.6);
   \draw [thick] (10.5,3.6) [out=180, in=90] to (9.4,2.5) [out=270,in=180] to (10.5,1.4) [out=0,in=270] to (11.6, 2.5);
   \draw [red,thick] (9.83,2.5) -- (10.47,2.5);
   \draw [blue,thick] (11.17,2.5) -- (10.53,2.5);
   \filldraw (10.5,2.5) circle (0.03);
   \filldraw  (9.8,2.5) circle (0.03);
   \filldraw (11.2,2.5) circle (0.03);
   \draw (10.5,2.5) circle (0.03);
   \draw (9.8,2.5) circle (0.03);
   \draw (11.2,2.5) circle (0.03);
   \draw [->, thick] (6.6,2.5) -- (8.2, 2.5);
   \draw [dashed, gray] (0.5,2.5) -- (6,2.5);
   \draw [thick] (1.5,3.4) [out=10,in=180] to (3.4,3.8) [out=0,in=90] to (5,2.5); 
   \draw [thick] (5,2.5) [out=270, in=0] to (3.4, 1.2) [out=180, in=-10] to (1.5,1.6);   
   \draw [thick, blue] (3.8,2.5) [out=-45, in=225] to (5,2.5);
   \draw [thick, dashed, blue] (5,2.5) [out=85, in=105] to (3.8,2.5);
   \node [below] at (4.5,2.2) {\tiny \color{blue} $c_2$};
    \node [below] at (3.25, 2.2) {{\tiny \color{red}$c_1$}};
   \draw [thick, red]  (2.7,2.5) [out=-60, in=240] to (3.8,2.5);
   \draw [thick, red] (2.7,2.5) [out=60, in=120] to (3.8,2.5);
   \draw [gray,thick](1.9,2.1) [out=0, in=280] to (2.7,2.5);
   \draw [dashed,gray, thick] (2.7,2.5) [out=115, in=0] to (1.1,2.95);
   \draw [thick] (1.5,1.6) [out=0,in=260] to (1.9,2.1) [out=80, in=-80] to (1.9, 2.9) [out=100, in=0] to (1.5, 3.4);
   \draw [thick, ->] (1.94,2.5) [out=270, in=90] to (1.94,2.55);
   \draw [thick] (1.5, 3.4) [out=195, in=80] to (1.1,2.9) [out=260, in=100] to (1.1, 2.1) [out=-80, in=160] to (1.5,1.6);
   \filldraw (3.8,2.5) circle (0.03);
   \filldraw  (2.7,2.5) circle (0.03);
   \filldraw  (5,2.5) circle (0.03);
   \draw (3.8,2.5) circle (0.03);
   \draw (2.7,2.5) circle (0.03);
   \draw (5,2.5) circle (0.03);
   \draw [->] (5.45,2.35) [out=-60,in=180] to (5.6,2.2) [out=0, in=270] to (5.75,2.5) [out=90, in=0] to (5.6, 2.8) [out=180,in=60] to (5.45,2.65);
   \draw [white] (0,4.5) circle (0.1);
   \draw [white] (0,0.9) circle (0.1);
    \draw [white] (12.7,2.5) circle (0.1);
    \node at (5.9,2.9) {$\theta$};
 \end{tikzpicture}
 \caption{$2$-fold cover of $3$-punctured disk}
\label{fig:double cover 3-punctured disk}
 \end{figure}
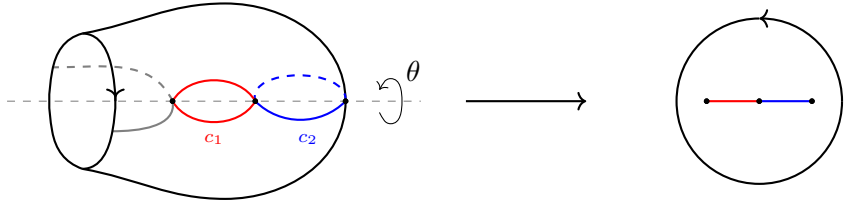

As in \S \ref{subsec:lift open book}, every element in $B_3 = \hbox{Mod}(D_3)$ admits a unique lift to ${\rm Mod}(T_1)$ which defines an embedding of groups $B_3 \to \hbox{Mod}(T_1)$. The Artin generators $\sigma_1$ and $\sigma_2$ of $B_3$ lift to the right-handed Dehn twists $T_{c_1}$ and $T_{c_2}$ respectively, where $c_i$ is the preimage of the segment connecting the $i^{th}$ and the $(i+1)^{st}$ punctures of the disk for $i=1,2$, as in Figure \ref{fig:double cover 3-punctured disk}. Since the Dehn twists $T_{c_1}$ and $T_{c_2}$ generate ${\rm Mod}(T_1)$, the embedding $B_3 \to \hbox{Mod}(T_1)$ constructed above is an isomorphism.

The identification between $B_3$ and the mapping class group ${\rm Mod}(T_1)$ can be used to show that any $3$-manifold which admits an open book decomposition with $T_1$-pages is the $2$-fold cyclic branched cover of a closed $3$-braid. Hence, the following conjugacy classification of $3$-braids leads to a complete list of diffeomorphism classes of genus $1$ open books with connected binding. 

Set $C=\Delta_3^2=(\sigma_1\sigma_2)^3$. 

\begin{theorem} {\rm (\cite[Proposition 2.1]{Mur}, \cite[Theorem 2.2]{Bal})} 
\label{thm:classification of 3 braids}
Up to conjugation, any braid $b$ in $B_3$ is equal to one of the following: 
\vspace{-.2cm} 
\begin{enumerate}
\item $C^d\cdot\sigma_1\sigma_2^{-a_1}\cdots\sigma_1\sigma_2^{-a_n}$, where $d\in \mathbb{Z}$, $a_i\geq 0$ and at least one of the $a_i$ is nonzero; 
\vspace{.2cm} \item $C^d\cdot \sigma_2^m$ for some $d\in \mathbb{Z}$ and $m\in \mathbb{Z}$. 
\vspace{.2cm} \item $C^d\cdot \sigma_1^m\sigma_2^{-1}$, with $d\in \mathbb{Z}$ and $m\in \{-1,-2,-3\}$. \qed 
\end{enumerate}
\end{theorem}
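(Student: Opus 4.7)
The plan is to exploit the classical isomorphism $B_3/\langle C\rangle \cong PSL_2(\mathbb Z) \cong (\mathbb Z/2)*(\mathbb Z/3)$, where $C = \Delta_3^2 = (\sigma_1\sigma_2)^3$ generates the centre of $B_3$. Since $C$ is central, conjugation in $B_3$ descends to conjugation in the quotient. Moreover, the $C$-exponent in any normal form $C^d\cdot w$ is pinned down by the abelianisation $B_3\to\mathbb Z$, $\sigma_i\mapsto 1$ (under which $C\mapsto 6$), so once $w$ is chosen in a specified normal form the integer $d$ is a conjugacy invariant. The problem therefore reduces to classifying conjugacy classes in $G := \langle a, b \mid a^2=b^3=1\rangle$ and then lifting the answer back to $B_3$.

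First I would apply the standard conjugacy theory for free products to $G$: every element of $G$ is conjugate to one of $1$, $a$, $b$, $b^2$, or to a cyclically reduced word $ab^{\epsilon_1}ab^{\epsilon_2}\cdots ab^{\epsilon_n}$ with each $\epsilon_i\in\{1,2\}$, and two such cyclically reduced words are conjugate in $G$ if and only if their exponent sequences agree up to cyclic rotation. Identifying $a$ with the image of $\Delta_3=\sigma_1\sigma_2\sigma_1$ and $b$ with the image of $\sigma_1\sigma_2$, a short calculation shows that each factor $\sigma_1\sigma_2^{-a_i}$ maps to a word of the form $ab^{\epsilon(a_i)}$ in $G$, so that a word as in case (1) projects to a generic cyclically reduced element. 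This recovers the first family of conjugacy classes, together with the condition ``at least one $a_i$ nonzero'' (which is what forces the projection not to fall into one of the four torsion classes).

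Next I would match the torsion classes of $G$ with cases (2) and (3). The trivial class lifts to $\{C^d\}$, which is case (2) with $m=0$. Reducible braids preserve an essential simple closed curve in the thrice-punctured disk necessarily bounding a disc with exactly two punctures; conjugating so that the curve bounds a regular neighbourhood of the last two punctures puts the braid in the centraliser of $\sigma_2$, giving case (2) with $m\ne 0$. The remaining torsion classes $a$, $b$, $b^2$ lift to families represented by $C^d\Delta_3$, $C^d(\sigma_1\sigma_2)$, and $C^d(\sigma_2\sigma_1)$; using the braid relation $\sigma_1\sigma_2\sigma_1=\sigma_2\sigma_1\sigma_2$ and absorbing an appropriate power of $C$, each of these rewrites uniquely as $C^{d'}\sigma_1^m\sigma_2^{-1}$ with a single $m\in\{-1,-2,-3\}$, yielding case (3).

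The main obstacle will be the bookkeeping in this last step: verifying that the three periodic families in (3) are pairwise non-conjugate and together exhaust the nontrivial periodic classes, and that each $C^d\sigma_2^m$ in (2) with $m\ne 0$ is distinct from every class in (1) and (3). Nielsen--Thurston type separates list (1) (pseudo-Anosov) from lists (2) and (3) (reducible or periodic), the $C$-exponent invariance separates different values of $d$ within each list, and the residual statement that $m\in\{-1,-2,-3\}$ are exactly the periodic exponents follows from a quick trace calculation: under the quotient $B_3\twoheadrightarrow PSL_2(\mathbb Z)$ sending $\sigma_1\mapsto\left(\begin{smallmatrix}1&1\\0&1\end{smallmatrix}\right)$ and $\sigma_2\mapsto\left(\begin{smallmatrix}1&0\\-1&1\end{smallmatrix}\right)$, the image of $\sigma_1^m\sigma_2^{-1}$ is $\left(\begin{smallmatrix}1+m&m\\1&1\end{smallmatrix}\right)$, which has finite order in $PSL_2(\mathbb Z)$ precisely when $|2+m|\leq 1$.
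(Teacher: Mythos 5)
Your overall strategy --- pass to $B_3/\langle C\rangle\cong\mathbb{Z}/2*\mathbb{Z}/3\cong PSL_2(\mathbb{Z})$, classify conjugacy classes there via cyclically reduced words in the free product, and lift back using that the kernel $\langle C\rangle$ is central, with the abelianisation pinning down $d$ --- is exactly the classical route behind the cited result (the paper itself offers no proof, only the references to Murasugi and Baldwin), and both the central-lifting step and the trace criterion identifying $m\in\{-1,-2,-3\}$ as the periodic exponents are correct. However, as written the exhaustiveness argument has a genuine gap. With $a=[\Delta_3]$ and $b=[\sigma_1\sigma_2]$ one has $[\sigma_1]\sim ab^{2}$ and $[\sigma_2^{-1}]\sim ab$; it is the individual letters, not the factors $\sigma_1\sigma_2^{-a_i}$, that map to single syllable pairs (for $a_i\geq 1$ the factor has syllable length $2(a_i+1)$), and the image of $\sigma_1\sigma_2^{-a_1}\cdots\sigma_1\sigma_2^{-a_n}$ is the cyclic word whose $b$-exponent sequence is $1^{a_1}2\,1^{a_2}2\cdots 1^{a_n}2$. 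Hence family (1) realizes exactly those cyclically reduced classes in which both exponents occur. In particular your parenthetical is wrong: taking all $a_i=0$ does not produce a torsion class --- it gives $(ab^{2})^{n}$, of infinite order --- and, more seriously, the classes $(ab)^{k}$ and $(ab^{2})^{k}$ with $k\geq 1$, i.e. the images of $\sigma_2^{\mp k}$, are realized by no word of family (1) at all. These classes are neither torsion nor covered by your matching of ``generic'' cyclically reduced words with case (1), and your only mechanism for them is the sentence about reducible braids, which begs the question: nothing in your framework shows that a braid whose image is $(ab)^{k}$ or $(ab^{2})^{k}$ is reducible, so exhaustiveness is not established (and the identification of the curve stabiliser with $\{C^{d}\sigma_2^{m}\}$ is itself asserted, not proved).

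The repair is immediate and uses the tool you already have: if the image of $\beta$ in the quotient is conjugate to $[\sigma_2^{m}]$, then, since the kernel is the central subgroup $\langle C\rangle$, $\beta$ is conjugate in $B_3$ to $C^{d}\sigma_2^{m}$ for some $d$, which is case (2); no invariant-curve argument is needed. One further bookkeeping slip: $\sigma_2\sigma_1$ is conjugate to $\sigma_1\sigma_2$, so $C^{d}(\sigma_2\sigma_1)$ again represents the class of $b$, not of $b^{2}$; the lifts of $b^{2}$ are the elements $C^{d}(\sigma_1\sigma_2)^{-1}=C^{d-1}(\sigma_1\sigma_2)^{2}$, which are conjugate to $C^{d}\sigma_1^{-1}\sigma_2^{-1}$. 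With these corrections the matching $b^{2}\leftrightarrow m=-1$, $a\leftrightarrow m=-2$, $b\leftrightarrow m=-3$ goes through and your argument becomes a complete proof of the statement.
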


\begin{remark}
The conjugation classification of $3$-braids detailed in Theorem \ref{thm:classification of 3 braids} corresponds to Nielsen-Thurston classification of mapping classes in ${\rm Mod}(D_3)=B_3$ (\S \ref{subsec:nielsen-thurson classification}): braids of type (1) are pseudo-Anosov; those of type (2) are reducible; those of type (3) are periodic. This can be verified by considering their lifts in ${\rm Mod}(T_1)$, where the Nielsen-Thurston type of an element $h\in {\rm Mod}(T_1)$ is determined by the trace of the linear map $h_*: H_1(T_1)\rightarrow H_1(T_1)$ (cf. \cite[\S 13.1]{FM})\footnote{The Nielsen-Thurston type of an element $h\in {\rm Mod}(T_1)$ is, by definition, the same as that of its projection in the mapping class group of the once-punctured torus, which is isomorphic to $SL(2,\mathbb{Z})$.}. 
\end{remark}

Baldwin listed all closed $3$-braids whose $2$-fold branched covers are L-spaces. 
 
\begin{theorem}{\rm (Theorem 4.1 of \cite{Bal})} \label{thm: baldwin}
The $2$-fold cyclic branched cover $\Sigma_2(\hat{b})$ of a closed braid $\hat{b}$ for $b\in B_3$ is an L-space if and only if $b$ belongs to one of the following lists: 
\vspace{-.2cm}
\begin{enumerate}
\item $C^d\cdot\sigma_1\sigma_2^{-a_1}\cdots\sigma_1\sigma_2^{-a_n}$, where $a_i\geq 0$ and at least one of the $a_i$ is nonzero, and $d\in\{-1,0,1\}$. 
\vspace{.2cm} \item $C^d\cdot \sigma_2^m$,  for some $m\in \mathbb{Z}$ and $d=\pm 1$. 
\vspace{.2cm} \item $C^d\cdot \sigma_1^m\sigma_2^{-1}$, with $m\in \{-1,-2,-3\}$ and $d\in\{-1,0,1,2\}$. 
\end{enumerate}
\end{theorem}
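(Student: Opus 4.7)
The plan is to split by Nielsen--Thurston type using Theorem \ref{thm:classification of 3 braids} and apply different tools in each case. Throughout I would exploit that $\Sigma_2(\hat b)$ depends only on the conjugacy class of $b$, and that passing to the mirror braid $\bar b$ or to $b^{-1}$ preserves $\Sigma_2(\hat b)$ up to orientation and hence the L-space property.

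For the pseudo-Anosov braids of type (1), write $b = C^d \tau$ with $\tau = \sigma_1\sigma_2^{-a_1}\cdots\sigma_1\sigma_2^{-a_n}$. Since $C = \Delta_3^2$ is the boundary Dehn twist of $D_3$, Lemma \ref{lem:poincare translation number} gives $c(C) = 1$, and centrality of $C$ yields $c(b) = d + c(\tau)$. The tail $\tau$ is Dehornoy-positive, and a short normal-form check shows $\tau <_D C^2$, so $\lfloor \tau \rfloor_D = 0$; Proposition \ref{prop:Dehornoy floor FDTC} combined with Remark \ref{rmk:FDTC braid ordering} then gives $c(\tau) \in [0,1]$. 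Consequently $c(b) \geq 2$ whenever $d \geq 2$, and $c(b^{-1}) = -c(b) \geq 2$ whenever $d \leq -3$, so in both ranges Theorem \ref{thm:conjecture cyclic braids} produces a co-oriented taut foliation on $\Sigma_2(\hat b)$, and the known implication $(2) \Rightarrow (1)$ of Conjecture \ref{conj: lspace} rules out the L-space property. The single borderline value $d = -2$ with $c(\tau) > 0$ falls below this threshold and requires a direct Heegaard--Floer argument, e.g.\ via a Manolescu--Ozsv\'ath surgery triangle relating $\Sigma_2(\hat b)$ to branched covers already treated. For $d \in \{-1, 0, 1\}$ I would establish the \emph{positive} direction by showing that $\hat b$ is quasi-alternating: an induction on $\sum a_i$ based on the skein criterion of Manolescu--Ozsv\'ath reduces $\hat b$ to simpler links whose double branched covers are known lens spaces or connected sums, and Ozsv\'ath--Szab\'o's theorem then implies $\Sigma_2(\hat b)$ is an L-space.

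For the reducible type (2) and periodic type (3) braids, the lifted monodromy $\bar\psi \in {\rm Mod}(T_1)$ (Figure \ref{fig:double cover 3-punctured disk}) is reducible or periodic, so $\Sigma_2(\hat b)$ is Seifert fibered with at most three exceptional fibres. Reading the Seifert invariants off $(d, m)$ from the explicit open book $(T_1, \bar \psi)$ gives lens spaces (in (2) for $d = \pm 1$), reducible manifolds of the form $L \# (S^1 \times S^2)$ (in (2) for $d = 0$, since $\sigma_2$ leaves one strand of the $3$-braid fixed and contributes a trivial summand to $\hat b$), spherical space forms, or small Seifert fibered manifolds (in (3)). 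In each case the L-space status is determined by the arithmetic characterization of L-space Seifert fibered rational homology spheres due to Lisca--Stipsicz -- equivalently the Jankins--Neumann--Naimi criterion on the foliation side -- and the listed cut-offs in $d$ correspond exactly to the thresholds of that criterion.

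The principal obstacle is the pseudo-Anosov case at small $|d|$, together with the borderline $d = -2$. The fractional-Dehn-twist-coefficient estimates available are too coarse to conclude, and an independent Heegaard--Floer input is unavoidable. The quasi-alternating induction is the cleanest route for $|d| \leq 1$, and it requires a careful combinatorial setup to verify that each intermediate resolution stays within the known L-space families; for $d = -2$ one is likely forced into a direct surgery computation or a spectral-sequence argument relating Khovanov and Heegaard Floer homology of the closed $3$-braid.
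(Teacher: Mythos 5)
First, a point of comparison: the paper does not prove this statement at all --- it is imported verbatim as Theorem 4.1 of \cite{Bal}, and the paper only uses it (in the proofs of Theorem \ref{thm:lspace genus one open book decomposition} and Corollary \ref{cor: branched cover genus 1}). So what you are offering is an independent re-proof of Baldwin's theorem, and it has to be judged on its own. The ``only if'' half of your plan (non-L-spaces outside the listed ranges of $d$) is essentially viable, but your estimate $c(\sigma_1\sigma_2^{-a_1}\cdots\sigma_1\sigma_2^{-a_n})\in[0,1]$ is coarser than necessary, which is what creates your ``borderline $d=-2$'' problem. The conjugation-by-$\Delta_3$ symmetry (exchange of $\sigma_1$ and $\sigma_2$), combined with Remark \ref{rmk:FDTC braid ordering} and Lemma \ref{lem:poincare translation number}, pins the twist coefficient of the tail to be exactly $0$ --- this is precisely the computation in the paper's proof of Theorem \ref{thm:lspace genus one open book decomposition} --- so $c(b)=d$ and Theorem \ref{thm:conjecture cyclic braids} already disposes of every $|d|\geq 2$ in family (1); no surgery-triangle patch is needed there.

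The genuine gap is the ``if'' direction, which is the actual content of Baldwin's theorem and cannot be extracted from the foliation/orderability machinery: being an L-space has to be certified by a Heegaard Floer (or equivalent) computation. Your quasi-alternating induction for family (1) with $d\in\{-1,0,1\}$ is itself a nontrivial theorem (essentially also due to Baldwin) and is only asserted, not carried out; one must check determinants are nonzero and that every resolution stays in the class, and quasi-alternating arguments cannot be a uniform strategy for the whole list in any case (family (3) contains $b=C^2\sigma_1^{-1}\sigma_2^{-1}$, whose closure is $T(3,5)$ with double branched cover the Poincar\'e sphere, an L-space bounding a non-quasi-alternating link). Your treatment of family (2) also contains a structural error: for $b=C^d\sigma_2^m$ with $|d|\geq 2$ and $m\neq 0$ the double cover is not Seifert fibred with at most three exceptional fibres --- the paper's own proof of Theorem \ref{thm:lspace genus one open book decomposition} exhibits it as a graph manifold containing an incompressible torus --- so the Lisca--Stipsicz criterion \cite{LS} does not apply directly and one would need graph-manifold gluing results (as in \cite{BC,HRRW}) or, as Baldwin actually does, explicit $HF^+$ computations for genus-one open books. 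As written, the proposal is a plausible outline rather than a proof, and the decisive L-space computations remain to be supplied.
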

Conjecture \ref{conj: lspace} holds for the manifolds listed in Theorem \ref{thm: baldwin} since they admit no co-oriented taut foliations (\cite{OS1,Bn,KR2}) and it was shown by Li and Watson (\cite{LW}) that they have non-left-orderable fundamental groups.   

\begin{proof}[Proof of Theorem \ref{thm:lspace genus one open book decomposition}] 
Let $M$ denote an irreducible $3$-manifold which admits a genus $1$ open book decomposition with connected binding. Then $M$ is diffeomorphic to the $2$-fold cyclic branched cover of a closed $3$-braid $\hat{b}$, where $b\in B_3$ falls into one of the three families listed in Theorem \ref{thm:classification of 3 braids}. 

Suppose first that $b$ is in family (3). Then $b=C^d\sigma_1^m\sigma_2^{-1}$ with $m\in\{-1,-2,-3\}$. We noted above that this implies that $b$ is periodic, so the mapping torus $N_b \cong D_m\times [0,1]/(x,1)\sim (b(x),0)$ of $b$ is Seifert fibred. The reader will verify that $b$ has period $3, 4$, or $6$ depending on whether $m$ is $-3, -2$, or $-1$. This implies that the fibre class on $\partial N_b$ intersects $\nu$ (Figure \ref{fig:branched_cover_braid_fig0}) more than once algebraically. In particular, $\nu$ is not the fibre class. Thus $N_b(\nu)$, the exterior of $\hat b$ in $M$, is a Seifert manifold and therefore so is the irreducible manifold $\Sigma_2(\hat b)$. (The reader will verify that $\hat b$ is prime and so the equivariant sphere theorem implies that $\Sigma_2(\hat b)$ is irreducible.) Hence the theorem holds in this case by \cite{BRW, BGW, LS}. 

If $b$ is in family (2), then $b = C^d \cdot \sigma_2^m$ with $m\in \mathbb{Z}$. Note that $d \ne 0$ as otherwise $\hat b$ is a split link and therefore $\Sigma_2(\hat b)$ is reducible, contrary to our hypotheses. Hence, given the paragraph immediately after Theorem \ref{thm: baldwin}, we can assume that $|d| \geq 2$. We can also suppose that $m \ne 0$ as otherwise $\hat b$ is a $(3, 3d)$ torus link so that $\Sigma_2(\hat b)$ is Seifert fibred and the theorem's conclusion follows from \cite{BRW, BGW, LS}. 

There is a circle $O$ in $\hbox{int}(D_3)$ which contains the second and third punctures of $D_3$ in its interior, but not the first, and is invariant under $b$. If $T$ denotes the torus obtained from $O$ in the mapping torus of $b$, the reader will verify that there is a genus $1$ Heegaard splitting $V_1 \cup_T V_2$ of $S^3$ where $V_1 \cap \hat b$ is an $(2, m+2d)$ torus link standardly embedded in the interior of $V_1$ and $V_2 \cap \hat b$ is a $(d,1)$ torus knot standardly embedded in the interior of $V_2$.

Then $\Sigma_2(\hat b)$ is the union of a $2$-fold branched cover $\Sigma_2(V_1, V_1 \cap \hat b)$ of $V_1$ branched over $V_1 \cap \hat b$ and a $2$-fold branched cover $\Sigma_2(V_2, V_2 \cap \hat b)$ of $V_2$ branched over $V_2 \cap \hat b$. Since $V_1 \setminus \hat b$ fibres over the circle with fibre a twice-punctured disc, $\Sigma_2(V_1, V_1 \cap \hat b)$ is homeomorphic to the product of a torus and an interval. On the other hand, $V_2$ admits a Seifert structure for which $V_2 \cap \hat b$ is a regular Seifert fibre, and therefore $\Sigma_2(V_2, V_2 \cap \hat b)$ admits a Seifert structure. Further, as $|d| \geq 2$, $\Sigma_2(V_2, V_2 \cap \hat b)$ is not a solid torus and so has an incompressible boundary. Thus $\Sigma_2(\hat b)$ is a graph manifold. In this case, Conjecture \ref{conj: lspace} has been confirmed in \cite{BC,HRRW}. 

Finally suppose that $b$ is in family (1). Then $b = C^d\cdot \sigma_1 \sigma_2^{-a_1} \cdots \sigma_1 \sigma_2^{-a_n}$ is pseudo-Anosov, where $a_i\geq 0$ and $a_i\neq 0$ for some $i$. As some of the $a_i$ may be zero, we can write $b = C^d \cdot \sigma_1^{b_1} \sigma_2^{-c_1} \cdots \sigma_1^{b_k} \sigma_2^{-c_k}$ where each $b_i$ and each $c_i$ is positive. Further, $k \geq 1$. We claim that the fractional Dehn twist coefficient $c(\sigma_1^{b_1} \sigma_2^{-c_1} \cdots \sigma_1^{b_k} \sigma_2^{-c_k})$ is zero. 

To see this, first observe that the conjugation by $\Delta_3=\sigma_1 \sigma_2 \sigma_1$ in $B_3$ exchanges $\sigma_1$ and $\sigma_2$.  Then by Lemma \ref{lem:poincare translation number}
\begin{equation}
 c(\sigma_2^{b_1} \sigma_1^{-c_1} \cdots \sigma_2^{b_k} \sigma_1^{-c_k}) = c(\Delta_3 (\sigma_1^{b_1} \sigma_2^{-c_1} \cdots \sigma_1^{b_k} \sigma_2^{-c_k}) \Delta_3^{-1} ) = c(\sigma_1^{b_1} \sigma_2^{-c_1} \cdots \sigma_1^{b_k} \sigma_2^{-c_k}).
 \label{equ:FDTC zero}
\end{equation}

But by the definition of the Dehornoy's order,  $\sigma_2^{b_1} \sigma_1^{-c_1} \cdots \sigma_2^{b_k} \sigma_1^{-c_k}<_D 1 <_D \sigma_1^{b_1} \sigma_2^{-c_1} \cdots \sigma_1^{b_k} \sigma_2^{-c_k}$. Hence according to Remark \ref{rmk:FDTC braid ordering}, Equality (\ref{equ:FDTC zero}) shows
\begin{displaymath}
 0 \leq c(\sigma_1^{b_1} \sigma_2^{-c_1} \cdots \sigma_1^{b_k} \sigma_2^{-c_k}) =  c(\sigma_2^{b_1} \sigma_1^{-c_1} \cdots \sigma_2^{b_k} \sigma_1^{-c_k}) \leq 0.
\end{displaymath}
Thus $c(\sigma_1^{b_1} \sigma_2^{-c_1} \cdots \sigma_1^{b_k} \sigma_2^{-c_k}) = 0$. But then as $C^d$ commutes with $\sigma_1^{b_1} \sigma_2^{-c_1} \cdots \sigma_1^{b_k} \sigma_2^{-c_k}$, 
$$c(b) = c(C^d) +c(\sigma_1\sigma_2^{-a_1}\cdots \sigma_1\sigma_2^{-a_m}) = d+c(\sigma_1\sigma_2^{-a_1}\cdots \sigma_1\sigma_2^{-a_m}) = d$$
by Lemma \ref{lem:poincare translation number}. 

By the discussion immediately preceding the statement of the theorem we can suppose that $|c(b)| = |d| \geq 2$. Theorem \ref{thm:conjecture cyclic braids} then implies that $\Sigma_2(\hat{b})$ is not an L-space, admits a co-orientable taut foliation, and has left-orderable  fundamental group. 
\end{proof}

Let $b(h)\in B_3$ be the image of $h$ under the isomorphism from ${\rm Mod}(T_1)$ to $B_3$ described above. Note that $b(\delta)=C$, where $\delta=(T_{c_1}T_{c_2})^3$ as in Corollary \ref{cor: branched cover genus 1}. 

\begin{proof}[Proof of Corollary \ref{cor: branched cover genus 1}] 
Suppose that $K$ is a genus one fibred knot in a closed, connected, orientable, irreducible $3$-manifold $M$, with fibre $T_1$ and monodromy $h$. Since $K$ has genus one, it is prime and therefore each $\Sigma_n(K)$ is irreducible. Further, each $\Sigma_n(K)$ contains a genus one fibred knot so Theorem \ref{thm:lspace genus one open book decomposition} implies that $\Sigma_n(K)$ is excellent, respectively a total L-space, for some $n \geq 2$ if and only if it is not an L-space, respectively is an L-space. 

 Up to conjugation, we can suppose that the braid $b(h) \in B_3$ is one of the forms listed in Theorem \ref{thm:classification of 3 braids}.  

First suppose that $h$ is pseudo-Anosov. If $c(h) \ne 0$, Theorem \ref{thm:conjecture fibre knots}(1) implies that $\Sigma_n(K)$ is excellent for all $n \geq 2$. Suppose then that $c(h) = 0$ and let $n \geq 2$. Since $h$ is pseudo-Anosov, $b(h)=C^d\cdot\sigma_1\sigma_2^{-a_1}\cdots\sigma_1\sigma_2^{-a_n}$, where $d \in \mathbb{Z}$, $a_i\geq 0$ and at least one of the $a_i$ is nonzero. The proof of Theorem \ref{thm:lspace genus one open book decomposition} and Lemma \ref{lem:FDTC of lifting monodromy} show that 
\begin{equation} \label{eqn: c(f) via c(b(f))}
0 = 2c(h) = c(b(h)) = d.
\end{equation}
and therefore, $b(h^n)=b(h)^n=(\sigma_1\sigma_2^{-a_1}\cdots\sigma_1\sigma_2^{-a_n})^n$. Theorem \ref{thm: baldwin}(1) now implies that $\Sigma_n(K)$ is an L-space. This is case (1) of Corollary \ref{cor: branched cover genus 1}.   

Next suppose that $h$ is reducible. Then $b(h) = C^d\cdot \sigma_2^m$ for some $d \in \mathbb{Z}$ and $m\in \mathbb{Z}$ (Theorem \ref{thm:classification of 3 braids}(2)). It follows that $b(h^n)=b(h)^n=C^{nd}\cdot \sigma_2^{nm}$ and since $nd \ne \pm 1$ for $n \geq 2$, Theorem \ref{thm: baldwin} shows that $\Sigma_n(K)$ is not an L-space. Thus it is excellent. 

Finally suppose that $h$ is periodic. Then $b(h) = C^d\cdot \sigma_1^m\sigma_2^{-1}$ where $d \in \mathbb{Z}$ and $m\in \{-1,-2,-3\}$ (Theorem \ref{thm:classification of 3 braids}(3)). The reader will verify that if $w_m = \sigma_1^{-m} \sigma_2^{-1}$, then 
$$w_m^r = \left\{ \begin{array}{ll} C^{-1} & \hbox{ if $m = 1$ and $r = 3$} \\ 
 C^{-1} & \hbox{ if $m = 2$ and $r = 2$} \\  
C^{-2} & \hbox{ if $m = 3$ and $r = 3$} 
\end{array} \right.$$
In particular,
$$w_m^2 = \left\{ \begin{array}{ll} C^{-1}w_{1}^{-1} & \hbox{ if $m = 1$} \\ 
 C^{-2}w_{3}^{-1} & \hbox{ if $m = 3$} 
\end{array} \right.$$
We consider the cases $m = 1, 2, 3$ separately. 

Suppose that $m = 1$ and let $n = 3k+r \geq 2$ where $r \in \{0,1,2\}$. Then $k \geq 1$ if $r$ is $0$ or $1$ and $k \geq 0$ otherwise. The identities above imply that 
$$b^n = (C^{d}w_{1})^{n}  = \left\{ \begin{array}{ll} C^{nd - k} & \hbox{ if } r = 0 \\ 
 C^{nd - k} w_{1} & \hbox{ if } r = 1 \\  
 (C^{k + 1 - nd} w_{1})^{-1}& \hbox{ if } r = 2 
\end{array} \right.$$
Theorem \ref{thm:classification of 3 braids} then shows  
$$\hbox{\rm $\Sigma_n(K)$ is an L-space if and only if } \left\{ \begin{array}{cl} 
nd  = k \pm 1 & \hbox{ {\rm if }} r = 0 \\

k-1 \leq nd  \leq k+2 & \hbox{ {\rm if }} r = 1, 2 

\end{array} \right. $$
Thus, 
$$\hbox{\rm $\Sigma_n(K)$ is an L-space if and only if } b =  \left\{ \begin{array}{r} 
w_{1}  \hbox{ {\rm and }} n \leq 5 \\ 

Cw_{1}  \hbox{ {\rm and }} n = 2 

\end{array} \right. $$

Next suppose that $m = 2$ and let $n = 2k+r \geq 2$ where $r \in \{0,1\}$. Then $k \geq 1$ and 
$$b^n = (C^{d}w_{2})^{n}  = \left\{ \begin{array}{ll} C^{nd - k} & \hbox{ if } r = 0 \\ 
C^{nd - k} w_{2} & \hbox{ if } r = 1 
\end{array} \right.$$
Theorem \ref{thm:classification of 3 braids} then shows  
$$\hbox{\rm $\Sigma_n(K)$ is an L-space if and only if } \left\{ \begin{array}{cl} 
nd  = k \pm 1 & \hbox{ {\rm if }} r = 0 \\

k-1 \leq nd  \leq k+2 & \hbox{ {\rm if }} r = 1

\end{array} \right. $$
Thus, 
$$\hbox{\rm $\Sigma_n(K)$ is an L-space if and only if } b = \left\{ \begin{array}{r} 
w_{2}  \hbox{ {\rm and }} n \leq 3 \\ 

Cw_{2}  \hbox{ {\rm and }} n \leq 3  

\end{array} \right. $$

Finally suppose that $m = 3$ and let $n = 3k+r \geq 2$ where $r \in \{0,1,2\}$. Then $k \geq 1$ if $r$ is $0$ or $1$ and $k \geq 0$ otherwise. We have, 
$$b^n = (C^{d}w_{3})^{n}  = \left\{ \begin{array}{ll} C^{nd - 2k} & \hbox{ if } r = 0 \\ 
C^{nd - 2k} w_{3} & \hbox{ if } r = 1 \\  
 (C^{2(k+1) - nd} w_{3})^{-1} & \hbox{ if } r = 2 
\end{array} \right.$$
Theorem \ref{thm:classification of 3 braids} then shows  
$$\hbox{\rm $\Sigma_n(K)$ is an L-space if and only if } \left\{ \begin{array}{cl} 
nd  = 2k \pm 1 & \hbox{ {\rm if }} r = 0 \\

2k-1 \leq nd  \leq 2k+2 & \hbox{ {\rm if }} r = 1 \\  

2k \leq nd  \leq 2k+3 & \hbox{ {\rm if }} r = 2 

\end{array} \right. $$
Thus, 
$$\hbox{\rm $\Sigma_n(K)$ is an L-space if and only if } b = \left\{ \begin{array}{r} 
w_{3}  \hbox{ {\rm and }} n = 2\\ 

Cw_{3}  \hbox{ {\rm and }} n \leq 5 

\end{array} \right. $$
This completes the proof. 
\end{proof}

\section{The left-orderability of the fundamental groups of cyclic branched covers of satellite links}
\label{sec:LO cyclic cover satellite knots}
Let $L$ be an oriented untwisted satellite link in an oriented integer homology $3$-sphere $M$ with pattern $P$, a link in the solid torus $N$, and companion $C$, a knot in $M$. The $n$-fold cyclic branched cover of $L$ is obtained in the usual way from the regular cover of the exterior of $L$ determined by the homomorphism which sends the oriented meridians of the components of $L$ to $1$ (mod $n$). It can be obtained by gluing copies of the cyclic covers of the knot exterior $X(C)$ to an $n$-fold cyclic cover of the solid torus $N$ branched over $P$. 

In what follows we assume that $L, P$ and $C$ are as above and that $P$ is the closure of an $m$-strand braid pseudo-Anosov $b$. The reader will verify that $H_1(N \setminus P) \cong \mathbb Z^{|P| + 1}$, where $|P|$ is the number of components of $P$, is freely generated by the meridian classes of the components of $P$ and the class $\nu$ carried by a longitudinal loop on $\partial N = \partial X(C)$ (see Figure \ref{fig:branched_cover_braid_fig0}). We use $N_n(P)$ to denote the $n$-fold cyclic branched cover of $P$ in $N$ determined by the homomorphism $H_1(N \setminus P) \to  \mathbb Z/n$ which sends the meridians of the components of $P$ to $1$ (mod $n$) and $\nu$ to $0$ (mod $n$). 

We follow the notation developed in Section \ref{subsec:lift open book}. Our discussion there shows that $N_n(P)$ is homeomorphic to the mapping torus $\bar{S}_n\times [0,1]/(x,1)\sim (\bar\psi(x),0)$, where $\bar{S}_n$ is the $n$-fold cyclic branched cover of the disk branched at $m$ points and $\bar{\psi}$ is the unique lift of $b$ satisfying $\bar{\psi}|_{\partial \bar {S}_n}$ is the identity. (See \S \ref{subsec:lift open book} for the details.) The boundary of $\bar{S}_n$ is connected when $\gcd(m,n)=1$, so $\partial N_n(P)$ is a torus. It is clear that each of the curves $\partial \bar{S}_n\times t_0$ carries the longitudinal slope of $N_n(P)$. Let $\mu$ be the class in $H_1(\partial N_n(P))$ carried by $(p_0 \times [0,1])/\sim$, $p_0\in \partial \bar{S}_n$.

\begin{proof}[Proof of Theorem \ref{thm: satellite knot c(b) and c(h) nonnegative}]
By assumption, $(S, h^n)$ is an open book decomposition of $X_n(C)$. As in \S \ref{sec: fdtc and lo} we use $\mu_n$ and $\lambda_n$ to denote the meridional and longitudinal slopes on $\partial X_n(C)$. Since $\gcd(m,n)=1$, we have 
$$\Sigma_n(L)=N_n(P)\cup_{\varphi} X_n(C)$$ 
where the homeomorphism $\varphi: \partial N_n(P)\rightarrow \partial X_n(C)$ sends $\mu$ to $\lambda_n$ and $\lambda$ to $\mu_n$. We will show that $\Sigma_n(L)$ admits a co-orientable taut foliation and has a left-orderable fundamental group (and is therefore excellent) by showing that the Dehn fillings $N_n(P)(\mu-\lambda)$ and $X_n(C)(\mu_n-\lambda_n)$ have the same properties.
 
Consider $X_n(C)$ first. The Dehn filling $X_n(C)(\mu_n-\lambda_n)$ has an open book decomposition $(S, T_\partial \circ h)$, where $T_\partial$ denotes the right-handed Dehn twist along $\partial S$. The assumption that $c(h)\geq 0$ implies  $nc(h)+1\geq 1$. By Theorem \ref{thm:conjecture fibre knots}(1) we know that $X_n(C)(\mu_n-\lambda_n)$ is excellent and hence has left-orderable fundamental group. However, to obtain a foliation on $\Sigma_n(L)$, we need to use that fact that Theorem \ref{thm:cgeq1} guarantees a co-oriented taut foliation on $X_n(C)(\mu_n-\lambda_n)$ which is transverse to the binding of the open book $(S, T_\partial \circ h)$. In our case, the binding is the core of the filling torus and consequently there is a co-oriented taut foliation on $X_n(C)$ which restricts to a linear foliation on $\partial X_n(C)$ of slope $\mu_n-\lambda_n$. 
 
Next consider $N_n(P) \cong \bar{S}_n\times [0,1]/(x,1)\sim (\bar\psi(x),0)$. (See the discussion just prior to the proof of Theorem \ref{thm: satellite knot c(b) and c(h) nonnegative}.) By Lemma \ref{lem:FDTC of lifting monodromy}, the assumption $c(b)\geq 0$ implies that $c(\bar{\psi})=\frac{c(b)}{n}\geq 0$. Since $N_n(P)(\mu-\lambda)$ is homeomorphic to the open book $(\bar{S}_n,T_\partial\circ\bar{\psi})$, an argument analogous to that used in the previous paragraph shows that there is a co-orientable taut foliation $\mathcal{F}_0$ on $N_n(P)$ which intersects $\partial N_n(P)$ transversely in a linear foliation of slope $\mu-\lambda$. 

We need to show that $\pi_1(N_n (P)(\mu-\lambda))$ is left-orderable. Let $\mathcal{F}$ denote the co-orientable taut foliation on $N_n(P)(\mu-\lambda)$ induced by $\mathcal{F}_0$. The existence of $\mathcal{F}$ implies that $N_n(P)(\mu-\lambda)$ is irreducible (\cite{Nov}). Hence, if $b_1(N_n(P)(\mu-\lambda)) > 0$, this is true by Corollary \ref{cor:b1 is not zero lo}. Assume then that $b_1(N_n(P)(\mu-\lambda)) = 0$. In other words, $N_n(P)(\mu-\lambda)$ is a rational homology $3$-sphere. We show that $e(T\mathcal{F})=0$.

 It is clear from our argument that the tangent plane field $T\mathcal{F}$ is homotopic to the contact structure supported by the open book $(\bar{S}_n,T_\partial\circ\bar{\psi})$ (cf. Theorem \ref{thm:cgeq1}). We observe that the monodromy $T_\partial\circ\bar{\psi}: \bar{S}_n\rightarrow \bar{S}_n$ is the lift of the braid  $\Delta_m^{2n}b:D_m\rightarrow D_m$ (see \S \ref{subsec:lift open book}). Hence by Lemma \ref{lem:Euler class vanishes links}, the contact structure supported by $(\bar{S}_n,T_\partial\circ\bar{\psi})$ has zero Euler class and therefore so does $e(T\mathcal{F})$. As a result, $\pi_1(N_n(P)(\mu-\lambda))$ is left-orderable by Theorem \ref{prop:taut foliation left orderability}. 

Finally, by piecing together the foliations on $N_n(P)$ and $X_n(C)$ constructed above we obtain a co-orientable taut foliation on $\Sigma_n(L)$. Further, an application of \cite[Theorem 2.7]{CLW} implies that $\pi_1(\Sigma(L))$ is left-orderable. Thus $\Sigma_n(L)$ is excellent. 
\end{proof}

\begin{proof}[Proof of Theorem \ref{thm:satellite c(h)>0 n>>0}] 
The proof of this theorem is similar to the one used to prove Theorem \ref{thm: satellite knot c(b) and c(h) nonnegative}. As such, we only point out a few of the key points.

Note that when $c(h)\neq 0$ and $n\geq \frac{2}{|c(h)|}$, the fractional Dehn twist coefficient of the monodromy of the $n$-fold cyclic cover $X_n(C)$ satisfies $|c(h^n)|=n|c(h)|\geq 2$. Then the argument used in the proof of Theorem \ref{thm: satellite knot c(b) and c(h) nonnegative} to analyse $X_n(C)(\mu_n - \lambda_n)$ can be used to show that:
\vspace{-.2cm} 
\begin{itemize}

\item $\pi_1(X_n(C)(\mu_n+ \lambda_n))$ and $\pi_1(X_n(C)(\mu_n- \lambda_n))$ are left-orderable ; 

\vspace{.2cm} \item there is a co-orientable taut foliation on $X_n(C)$ which induces a linear foliation on $\partial X_n(C)$ with leaves of slope $\mu_n + \lambda_n$; 

\vspace{.2cm} \item there is a co-orientable taut foliation on $X_n(C)$ which induces a linear foliation on $\partial X_n(C)$ with leaves of slope $\mu_n - \lambda_n$.

\end{itemize}
\vspace{-.2cm} When $c(h) = 0$, it is simple to see that $X_n(C)$ possess the same properties.  

Based on these observations, the rest of the proof follows exactly as in that of Theorem \ref{thm: satellite knot c(b) and c(h) nonnegative} when $c(b)\geq 0$. In the case that $c(b)< 0$, one can apply the arguments of Theorem \ref{thm: satellite knot c(b) and c(h) nonnegative} but replacing $\mu_n - \lambda_n$ by $\mu_n+\lambda_n$ and $\mu - \lambda$ by $\mu+\lambda$ to complete the proof. 
\end{proof}


\begin{thebibliography}{HRRWS} 

{\small

\bibitem[Al]{Al} J.~W.~Alexander, {\it A Lemma on System of Knotted Curves}, Proc. Nat. Acad. Sci. USA {\bf 9} (1923), 93--95.

\bibitem[BM]{BM} K.~Baker and K.~Motegi, {\it Seifert vs slice genera of knots in twist families and a characterization of braid axes}, preprint (2017), arXiv:1705.10373.

\bibitem[Bal]{Bal} J.~Baldwin, {\it Heegaard Floer homology and genus one, one-boundary component open books}, J. Topology {\bf 4} (2008), 963--992.

\bibitem[BBG]{BBG}  M.~Boileau, S.~Boyer and C.~McA.~Gordon, {\it Branched covers of quasipositive links and L-spaces}, preprint (2017), arXiv:1710.07658. 

\bibitem[BC]{BC} S.~Boyer and A.~Clay, {\it Foliations, orders, representations, L-spaces and graph manifolds}, Adv. Math., {\bf 310} (2017), 159--234.

\bibitem[BGW]{BGW}  S.~Boyer, C.~McA.~Gordon and L.~Watson, {\it  On L-spaces and left-orderable fundamental groups}, Math. Ann. {\bf 356} (2013), 1213--1245. 

\bibitem[BPH]{BPH} M.~Boileau, J.~Porti and M.~Heusener, {\bf Geometrization of 3-orbifolds of cyclic type}, Ast\'erisque {\bf 272}, Soc. Math. France, Paris, 2001.

\bibitem[BRW]{BRW}  S.~Boyer, D.~Rolfsen and B.~Wiest, {\it  Orderable 3-manifold groups}, Ann. Inst. Fourier {\bf 55} (2005), 243--288.

\bibitem[Bn]{Bn}  J.~Bowden, {\it Approximating $C^0$-foliations by contact structures}, Geo. Func. Anal. {\bf 26} (2016), 1255--1296.

\bibitem[BZ]{BZ} G.~Burde, H.~Zieschang, {\bf Knots}, Second edition. De Gruyter Studies in Mathematics, 5. Walter de Gruyter \& Co., Berlin, 2003.

\bibitem[Cal1]{Cal1} D.~Calegari, {\it Leafwise smoothing laminations}, Algebr. Geom. Topol. {\bf 1} (2001), 579--585.

\bibitem[Cal2]{Cal2} \bysame, {\bf Foliations and the geometry of 3-manifolds}, Oxford Mathematical Monographs, Oxford University Press, Oxford, 2007.

\bibitem[Can]{Can} A.~Candel, {\it Uniformization of surface laminations}, Ann. Sci. Ec. Norm. Sup. {\bf 26} (1993), 489--516.

\bibitem[CC]{CC} A.~Candel and L.~Conlon, {\bf Foliations II}, Graduate Studies in Mathematics 60, Amer. Math. Soc., 2003. 

\bibitem[CD]{CD} D.~Calegari and N.~Dunfield {\it Laminations and groups of homeomorphisms of the circle}, Inv. Math. {\bf 152} (2003), 149--204.

\bibitem[CLW]{CLW} A.~Clay, T.~Lidman and L.~Watson, {\it Graph manifolds, left-orderability and amalgamation}, Algebr. Geom. Topol. {\bf 13} (2013), 2347--2368.  

\bibitem[DPT]{DPT} M.K.~Dabkowski, J.H.~Przytycki, and A.A.~Togha, {\em Non-left-orderable 3-manifold groups}, Canad. Math. Bull. {\bf 48}(1) (2005), 32--40. 

\bibitem[Dun]{Dun} W.~Dunbar, {\em Geometric orbifolds}, Rev. Mat. Univ. Complut. Madrid {\bf 1}(1) (1988), 67--99.

\bibitem[FLP]{FLP} A.~Fathi, F.~Laudenbach, V.~ Po\'enaru, {\bf Thurston's work on surfaces}, Translated from the 1979 French original by Djun M. Kim and Dan Margalit. Mathematical Notes, 48. Princeton University Press, Princeton, NJ, 2012. 

\bibitem[FM]{FM} B.~Farb and D.~Margalit, {\bf A primer on mapping class groups}, Princeton University Press, 2011.

\bibitem[Ga1]{Ga1} D.~Gabai, {\it Foliations and the topology of 3-manifolds}, J. Differ. Geom. {\bf 18}(3) (1983), 445--503. 

\bibitem[Ga2]{Ga2} \bysame, {\it Foliations and $3$-manifolds}, in {\bf Proceedings of the {I}nternational {C}ongress of {M}athematicians}, {V}ol.\ {I}, {II} ({K}yoto, 1990), pages 609--619. Math. Soc. Japan, Tokyo, 1991.

\bibitem[GO]{GO} D.~Gabai and U.~Oertel, {\it Essential laminations in $3$-manifolds}, Ann. of Math. {\bf 130} (1989), 41--73.

\bibitem[Gei]{Gei} H. ~Geiges, {\bf An introduction to contact topology}, Volume {\bf 109},  Cambridge University Press, 2008.

\bibitem[Ghy]{Ghy} E.~Ghys, {\it Groups acting on the circle}, Ens. Math. {\bf 47} (2001), 329--408.

\bibitem[Ghi]{Ghi} P. ~Ghiggini, {\it Knot Floer homology detects genus-one fibred knots}, Amer. J. Math. {\bf 130}(5) (2008), 1151--1169. 

\bibitem[Gor]{Gor} C.~McA.~Gordon, {\it Riley's conjecture on $SL(2,\mathbb R)$ representations of $2$-bridge knots}, J. Knot Theory Ramifications {\bf 26} (2017), 6 pp. 

\bibitem[GLid1]{GLid1}
C.~McA.~Gordon and T.~Lidman, {\em Taut foliations, left-orderability, and cyclic branched covers}, Acta Math. Vietnam {\bf 39} (2014), no.4, 599--635.

\bibitem[GLid2]{GLid2}
\bysame, {\em Corrigendum to ``Taut foliations, left-orderability, and cyclic branched covers"}, Acta Math. Vietnam {\bf 42} (2017), 775--776. 

\bibitem[HRW]{HRW} J.~Hanselman, J.~Rasmussen and L.~Watson, {\it Bordered Floer homology for manifolds with torus boundary via immersed curves}, preprint (2016), arXiv:1604.03466.

\bibitem[HRRW]{HRRW} J.~Hanselman, J.~Rasmussen, S.~Rasmussen, and L.~Watson, {\it Taut foliations on graph manifolds}, preprint (2015), arXiv:1508.05911v1.

\bibitem[HKP]{HKP} S.~Harvey, K.~Kawamuro, and O.~Plamenevskaya, {\it On transverse knots and branched covers}, Int. Math. Res. Notices, {\bf 3} (2009), 512--546.

\bibitem[Hed]{Hed} M.~Hedden, {\it Notions of positivity and the {O}zsv\'ath-{S}zab\'o concordance invariant}, J. Knot Theory Ramifications {\bf 19} (2010), 617--629.

\bibitem[HKM1]{HKM1} K.~Honda, W.~Kazez, and G.~Mati{\'{c}}, {\it Rightveering diffeomorphisms of compact surfaces with boundary}, Invent. Math. {\bf 169} (2007), 427--449.

\bibitem[HKM2]{HKM2} \bysame, {\it Rightveering diffeomorphisms of compact surfaces with boundary II}, Geom. \& Top. {\bf 12} (2008), 2057--2094.

\bibitem[Hom]{Hom} J.~Hom, {\it Satellite knots and L-space surgeries}, Bull. London Math. Soc. {\bf 48} (2016), 771--778.

\bibitem[HS]{HS} J.~Howie and H.~Short, {\it The band-sum problem}, J. Lond. Math. Soc. {\bf 2} (1985), 571--576. 

\bibitem[Hu]{Hu} Y.~Hu, {\it Left-orderability and cyclic branched coverings}, Algebr. Geom. Topol.{\bf 15} (2015), 399--413.

\bibitem[Ito1]{Ito1} T.~Ito, {\it Braid ordering and knot genus}, J. Knot Theory Ramifications, {\bf 9} (2011),1311--1323.

\bibitem[Ito2]{Ito2} \bysame, {\it Braid ordering and the geometry of closed braid}, Geom. \& Top. {\bf 15} (2011), 473--498.

\bibitem[IK]{IK} T.~Ito and K.~Kawamuro, {\it Essential open book foliations and fractional Dehn twist coefficient}, Geom. Dedicata {\bf 187} (2017), 17--67.

\bibitem[Ju]{Ju} A.~Juh\'asz, {\it A survey of Heegaard Floer homology}, in {\bf New ideas in low dimensional topology}, 237--296. World Scientific, 2015.

\bibitem[KR1]{KR1} W. ~Kazez and R. ~Roberts, {\it Fractional Dehn twists in knot theory and contact topology}, Algebr. Geom. Topol.,{\bf 13} (2013), 3603--3637. 

\bibitem[KR2]{KR2} \bysame, {\it Approximating $C^{1,0}$-foliations}, in {\bf Interactions between low-dimensional topology and mapping class groups}, 21--72, Geom. Topol. Monogr., {\bf 19}, Geom. Topol. Publ., Coventry, 2015.

\bibitem[LS]{LS} P.~Lisca and A.~Stipsicz, {\it Ozsv\'ath-Szab\'o invariants and tight contact 3-manifolds, {III}}, J. Symplectic Geom. {\bf 5} (2007), 357--384.

\bibitem[LW]{LW} Y.~Li and L.~Watson, {\it Genus one open books with non-left-orderable fundamental group}, Proc. Amer. Math. Soc. {\bf 142} (2014), 1425--1435.

\bibitem[Mal]{Mal} A.~Malyutin, {\it Twist number of (closed) braids}, St. Petersburg Math. J. {\bf 16} (2005), 791--813.

\bibitem[MB]{MB} J.~W.~Morgan and H.~Bass, {\bf The Smith Conjecture}, Pure and Applied Math. {\bf 112}, Academic Press, 1984.

\bibitem[Mi]{Mil} J.~Milnor, {\it On the existence of a connection with curvature zero}, Comm. Math. Helv. {\bf 32} (1958), 215--223.

\bibitem[MM]{MM} S.~Matsumoto and S.~Morita, {\it Bounded cohomology of certain groups of homeomorphisms}, Proc. Amer. Math. Soc. {\bf 94} (1985), 539--544.

\bibitem[Mor]{Mor} S.~Morita, {\bf Geometry of Differential Forms}, Translations of Mathematical Monographs, {\bf 201}, American Mathematical Society, 2001.

\bibitem[MS]{MS} J.~Milnor, J.~Stasheff, {\bf Characteristic classes}, Annals of Mathematics Studies, No. 76. Princeton University Press, 1974.

\bibitem[MSY]{MSY} W.~Meeks, L.~Simon, and S.~T.~Yau, {\it Embedded minimal surfaces, exotic spheres, and manifolds with positive Ricci curvature}, Ann. Math. {\bf 116} (1982), 621--659.fibred

\bibitem[Mur]{Mur} K.~Murasugi, {\bf On closed 3-braids}, Mem.  Amer. Math. Soc. {\bf 151}, American Mathematical Society, Providence, R.I., 1974.

\bibitem[Ni]{Ni} Y.~Ni, {\it Knot Floer homology detects fibred knots}, Invent. Math. {\bf 170} (2007), 577--608.

\bibitem[Nov]{Nov} S.~P.~Novikov, {\it The topology of foliations}, Tru. Mosk. Mat. Obsc. {\bf 14} (1965), 248--278.

\bibitem[OS1]{OS1} P.~Ozsv\'ath and Z.~Szab\'o, {\it Holomorphic disks and genus bounds}, Geom. \& Top. {\bf 8} (2004), 311-- 334.

\bibitem[OS2]{OS2} \bysame, \emph{On knot Floer homology and lens space surgeries}, Topology {\bf 44} (2005), 1281--1300.

\bibitem[Pe]{Pe} T.~Peters, {\em On L-spaces and non-left-orderable 3-manifold groups}, preprint (2009), arXiv:0903.4495.

\bibitem[Pla]{Pla} J.~Plante, {\it Foliations with measure preserving holonomy}, Ann. Math. {\bf 102} (1975), 327--361.

\bibitem[Rob]{Rob} R.~Roberts, {\it Taut foliations in punctured surface bundles, II}, Proc. London Math. Soc. {\bf 83} (2001) 443--471. 

\bibitem[Rol]{Rol} D.~Rolfsen, {\bf Knots and links}, American Mathematical Soc. {\bf 346}, 2003.

\bibitem[Ros]{Ros} H.~Rosenberg, {\it Foliations by planes}, Topology {\bf 7} (1968), 131--138.

\bibitem[Spa]{Spa} E.~Spanier, {\bf Algebraic topology}, Springer-Verlag, 1995.

\bibitem[Thu1]{Thu1} W.~Thurston, {\it Hyperbolic structures on $3$-manifolds, II: Surface groups and $3$-manifolds which fiber over the circle}, preprint (1986), arXiv:math.GT/9801045.

\bibitem[Thu2]{Thu2} \bysame , {\it On the geometry and dynamics of diffeomorphisms of surfaces}, Bull. Amer. Math. Soc. {\bf 19} (1988), 417--431.

}

\end{thebibliography}
\end{document}